\numberwithin{equation}{section}
\newcommand{\U}{\mathcal{U}}
\newcommand{\um}{\U_{\cM}}
\def\um{\mathcal{U}_\m}
\newcommand{\ve}{\varepsilon}
\newcommand{\CC}{\mathbb{C}}
\newcommand{\RR}{\mathbb{R}}
\newcommand{\NN}{\mathbb{N}}
\newtheorem{fed}{Definition}[section]
\newtheorem{teo}[fed]{Theorem}
\newtheorem*{teo*}{Theorem}
\newtheorem{lem}[fed]{Lemma}
\newtheorem{cor}[fed]{Corollary}
\newtheorem{pro}[fed]{Proposition}
\theoremstyle{definition}
\newtheorem{rem}[fed]{Remark}
\def\bh{B(\mathcal{H})}
\def\co{\mathrm{conv}\,}
\def\m{\mathcal{M}}
\def\msa{\m^{{\rm sa}}}
\def\cA{\mathcal{A}}
\def\cB{\mathcal{B}}
\def\cC{\mathcal{C}}
\def\cF{\mathcal{F}}
\def\cK{\mathcal{K}}
\def\cM{\mathcal{M}}
\def\cN{\mathcal{N}}
\def\cP{\mathcal{P}}
\def\cT{\mathcal{T}}
\def\cU{\mathcal{U}}
\def\cZ{\mathcal{Z}}
\DeclareMathOperator{\Tr}{{\rm Tr}\,}
\newcommand{\supp}{\mbox{supp }}
\newcommand{\ds}{\displaystyle}
\newcommand{\sess}{\sigma_{\rm e}}
\newcommand{\sdisc}{\sigma_{\rm d}}
\newcommand\e{\varepsilon}
\newcommand{\diag}[1]{\hbox{diag}\left( #1\right)}
\newcommand{\falpha}{f}
\newcommand{\gbeta}{g}
\begin{document}

\title{Schur-Horn theorems in II$_\infty$-factors}

\author{Martín Argerami}
\author{Pedro Massey}

\subjclass[2010]{Primary 46L51; Secondary 46L10, 52A05, 15A18}

\begin{abstract} We describe majorization between selfadjoint operators in
a $\sigma$-finite II$_\infty$ factor $(\m,\tau)$ in terms of simple
spectral relations.
For a diffuse abelian von Neumann subalgebra $\cA\subset \m$ with trace-preserving
conditional expectation $E_\cA$,
we characterize the closure in the measure topology of the image through $E_\cA$
of the unitary orbit of a selfadjoint operator in $\m$ in terms of majorization (i.e., a Schur-Horn
theorem).
We also obtain similar results for the contractive orbit of positive
operators in $\m$ and for the unitary
and contractive
orbits of $\tau$-integrable operators in $\m$.
\end{abstract}

\maketitle

\section{Introduction}

Given two vectors $x,y\in\RR^n$, we say that $x$ is \emph{majorized} by $y$ ($x\prec y$)
if
\[
\sum_{j=1}^kx_j^\downarrow\leq\sum_{j=1}^ky_j^\downarrow,\ \ k=1,\ldots,n-1;\ \ \
\sum_{j=1}^nx_j=\sum_{j=1}^ny_j\,,
\]where $x^\downarrow\in \RR^n$ denotes the vector obtained from $x$  by re-arranging the entries in non-increasing order.
The first systematic study of the notion of majorization is attributed to Hardy, Littlewood, and Polya
\cite{HardyLittlewoodPolya1929}. We refer the reader to \cite{Bhatia1997} for further
references and properties of majorization. It is well known that (vector) majorization is
intimately related with the theory of doubly stochastic matrices. Indeed,
$x\prec y$ if and only if $x=Dy$ for some doubly stochastic matrix $D$;
then, as a consequence of Birkhoff's characterization of the extreme points of the set of
doubly stochastic matrices \cite{Birkhoff1946}, one can conclude that
\begin{equation}\label{eq:birkhoff en dimension finita}
\{x\in\RR^n:\ x\prec y\}=\co\{y_\sigma:\ \sigma \in\mathbb{S}_n\}\,,
\end{equation} where $\co\{y_\sigma:\ \sigma \in\mathbb{S}_n\}$ denotes the convex hull of the set of
vectors $y_\sigma$ that are obtained from $y$ by re-arrangement of its components
through permutations $\sigma\in \mathbb{S}_n$.

It turns out that majorization also characterizes the relation between the spectrum
and the diagonal of a selfadjoint matrix. Let $M_n(\CC)$ denote the algebra of complex
$n\times n$ matrices. For $A\in M_n(\CC)$, let $\diag{A}=(a_{11},a_{22},\ldots,a_{nn})\in\CC^n$,
and let $\lambda(A)\in\CC^n$ be the vector whose coordinates are the
eigenvalues of $A$, counted with multiplicity. I. Schur \cite{Schur1923}
 proved that for $A\in M_n(\CC)$ selfadjoint,
$\diag{A}\prec\lambda(A)$; while A. Horn  \cite{Horn1954} proved the
converse: given $x,y\in\RR^n$ with $x\prec y$, there exists a selfadjoint matrix $A\in
M_n(\CC)$, with $\diag{A}=x$, $\lambda(A)=y$. For $y\in \CC^n$ let $M_y\in M_n(\CC)$ denote the diagonal matrix with main diagonal $y$ and let $\cU_n\subset M_n(\CC)$ denote the group of unitary matrices.
The results from Schur and Horn can then be combined in the following assertion: given $y\in \RR^n$,
\begin{equation}\label{eq:sh en dimension finita}
\{x\in\RR^n:\ x\prec y\}=\{\diag{U\,M_{y}\,U^*}:\ U\in\cU_n\},
\end{equation}
usually known as the Schur-Horn Theorem. The fact that majorization relations imply
a family of entropic-like inequalities makes the Schur-Horn theorem an important
tool in matrix analysis theory \cite{Bhatia1997}. It has also been observed that the
Schur-Horn theorem plays a crucial role in frame theory
\cite{AntezanaMasseyRuizStojanoff2009,MR2176806,MR2581231}.

Majorization in the context of von Neumann algebras has been widely
studied (see for instance \cite{ArgeramiMassey2008b,Hiai1987,Hiai1989,
HiaiNakamura1987,Kamei1983,Kamei1984}).
In \cite{Hiai1987} and \cite{Hiai1989} F. Hiai showed several characterizations of
majorization
in a semifinite von Neumann algebra, including a generalization of
\eqref{eq:birkhoff en dimension finita}, i.e. a ``Birkhoff'' theorem. Nevertheless, \
the lack of the corresponding ``Schur-Horn'' theorems in the
general context of von Neumann factors was only recently observed.
Early work on this topic was developed by A. Neumann \cite{Neumann1999,MR1887630}
in relation with an extension to infinite dimensions of the linear Kostant convexity
theorem in Lie theory.

It was in \cite{ArvesonKadison2007} that W. Arveson and R.V. Kadison
conjectured a Schur-Horn theorem in II$_1$ factors. Although this conjecture remains
an open problem, there has been progress on related (but weaker) Schur-Horn theorems
in this context \cite{ArgeramiMassey2007,ArgeramiMassey2008a,ArgeramiMassey2009}.
There has also been significant improvements of Neumann's work on majorization between sequences in $c_0(\RR^+)$ due to V. Kaftal and G. Weiss \cite{MR2436756,Kaf2} because of the relations between infinite dimensional versions of the Schur-Horn theorem (via majorization of bounded structured real sequences) and arithmetic mean ideals (see also \cite{ArvesonKadison2007} for improvements in the compact case in $B(H)$).

In this paper we prove versions of the Schur-Horn theorem (i.e.
generalizations of \eqref{eq:sh en dimension finita}) in the case
of a $\sigma$-finite II$_\infty$-factor. These results extend those
obtained in
\cite{ArgeramiMassey2007,ArgeramiMassey2008a,Neumann1999}. Our
results are in the vein of Neumann's work, and they are related
with a weak version of Arveson-Kadison's scheme for Schur-Horn theorems, but
modeled in II$_\infty$ factors.  These extensions are formally
analogous to the Schur-Horn theorems in
\cite{ArgeramiMassey2007,ArgeramiMassey2008a}, but the techniques are
more involved in the infinite case. We show that our results
are optimal, in the sense that they can not be strengthened for a
general selfadjoint operator in a II$_\infty$ factor.

The paper is organized as follows.  In section \ref{section:preliminaries} we develop
notation and some basic results on the measure topology and the $\tau$-singular values in von Neumann algebras. Section \ref{section: B(H)} deals with majorization in $B(H)$, including some results complementing those in
\cite{Neumann1999}.
In Section \ref{section:majorization} we consider a notion of majorization between
selfadjoint operators in a II$_\infty$ factor $(\m,\tau)$
-- in line with Neumann's idea \cite{Neumann1999} -- together with several of
its basic properties. Although  majorization in II$_\infty$ factors is not a new notion
\cite{Hiai1987,Hiai1989}, our approach is quite different from the previous
presentations. In section \ref{section:Schur-Horn Theorem} we
state and prove the generalizations of the Schur-Horn theorem in II$_\infty$ factors.
Our strategy is to reduce the problem to a discrete version, where we can apply the
Schur-Horn theorems developed in Section \ref{section: B(H)} for $B(H)$. We then proceed to show that
Hiai's notion of majorization  in terms of Choquet's theory of comparison of
measures \cite{Hiai1989} coincides with ours. We finally consider similar results
for the contractive orbit of a positive operator  and for the unitary and contractive orbits of
bounded $\tau$-measurable operators.

\section{Preliminaries}\label{section:preliminaries}

  Let $(\m,\tau)$ be a $\sigma$-finite, semi-finite, diffuse von Neumann algebra.
  The real subspace of selfadjoint elements in $\m$ is denoted by $\m^{\rm sa}$;
  the group of unitary operators by $\um$; and the set of selfadjoint projections by $\cP(\m)$.
Given $p\in\cP(\m)$, we use the notation $p^\perp=I-p$.
  For any $a\in\m^{\rm sa}$ and
any Borel set $\Delta\subset\RR$, $p^a(\Delta)\in \cP(\m)$ denotes the spectral projection
of $a$ corresponding to
$\Delta$.

In \cite{Fack1982} T. Fack considered in $\m$ the ideals $\cF(\m)=\{x\in
\m:\ \tau(\supp x^*)<\infty\}$ -- the  \emph{$\tau$-finite
rank operators} -- and $\cK(\m)=\overline{\cF(\m)}$, the ideal of \emph{$\tau$-compact operators}.
The quotient
C$^*$-algebra $\m/\cK(\m)$ is called the generalized Calkin
algebra. The \emph{essential spectrum} of $x$ -- denoted
$\sess(x)$ -- is the spectrum of $x+\cK(\m)$ as an element of
$\m/\cK(\m)$. The complement of $\sess(x)$ within $\sigma(x)$ is
 the \emph{discrete spectrum} $\sdisc(x)$ of $x$. As shown in
\cite{Hiai1989},  for $x\in \m^{\rm sa}$,
\[
\sess(x)=\{t\in\sigma(x):\ \forall\e>0,\ \tau(p^x(t-\e,t+\e))
=\infty\}.
\]
It follows from the previous definitions that
$x\in  \m^{\rm sa}$ is $\tau$-compact if and only if $\sess(x)=\{0\}$.

We  consider in $\m$ the \emph{measure topology} $\cT$, which
is the linear topology given by the neighborhoods of $0\in \m$,
\[
V(\e,\delta)= \{ r\in \m: \ \exists p\in \cP(\m),\
\|rp\|< \e, \ \tau(p^\perp)<\delta\},
\]
where $\e,\delta>0$. For a II$_1$ factor, $\cT$ reduces to the $\sigma$-strong topology
on bounded sets,
while in a type I$_\infty$ factor it reduces to the norm topology.

\begin{fed}\label{def:upper and lower spectral scale}
The \emph{upper spectral scale} of\ \ $b\in\m^{\rm sa}$ is the
non-increasing right-continuous real function
\[
\lambda_t(b)=\min\{s\in\RR:\ \tau(p^b(s,\infty))\leq t\},\ \ \ t\in[0,\infty).
\] The \emph{lower spectral scale} of $b$ is the non-decreasing right-continuous function
\[
\mu_t(b)= - \lambda_t(- b)= \max\{s\in\RR:\ \tau(p^b(-\infty,s))\leq t\},\ \ \ t\in[0,\infty).
\]
\end{fed}

A direct consequence of these definitions is that  $\lambda_t(b),\,\mu_t(b)\in \sigma(b)$
for every $t\in \RR^+$.
The function $t\mapsto\lambda_{t}(b)$ is the analogue of the re-arrangement of
the eigenvalues (in non-increasing order and counting multiplicities) of a self-adjoint matrix.

For $x\in \m$ we can consider the \emph{$\tau$-singular values} of $x$ given by
$\nu_t(x)=\lambda_t(|b|)$, $t\in[0,\infty)$. The spectral scale and $\tau$-singular
values have been extensively studied
\cite{Fack1982,FackKosaki1986,HiaiNakamura1987,Kadison2004,Petz1985} in the broader context
of $\tau$-measurable operators affiliated to $(\m,\tau)$.

The elements of $\cK(\m)$ can be described in terms of $\tau$-singular values.
Indeed,  $x\in \m$ is $\tau$-compact if and only if
$\lim_{t\rightarrow\infty}\nu_t(x)=0$ \cite{Hiai1987}.
We will make frequent use of the fact that (since $\m$ is diffuse) a given $\tau$-compact
$x\in\m^{+}$  admits a {complete flag}, i.e. an increasing assignment
$\RR^+\ni t\mapsto e(t)\in\cP(\m)$ such that $\tau(e(t))=t$, and
\begin{equation}\label{eq:integral con la bandera completa}
x=\int_0^\infty\,\lambda_t(x)\,de(t)\,.
\end{equation}
As opposed to the finite case \cite{ArgeramiMassey2007}, the equality in \eqref{eq:integral con la bandera completa} does not hold for arbitrary $\tau$-compact selfadjoint operators in $\m$. This is possibly one of the reasons why majorization has  been considered mainly between positive operators in the semi-finite algebras
(see the remarks at the end of \cite{Hiai1987}).
We shall overcome this issue by considering both the upper and lower spectral scale, as done
in \cite{Neumann1999} in the case of separable I$_\infty$ factors.

The following fact is used in \cite{Hiai1989} (in the context of possibly unbounded operators)
but we do not know of an explicit proof in the literature. For $x\in \m$, we denote its usual one-norm or trace norm in $(\m,\tau)$ by $\|x\|_1=\tau(|x|)\in [0,\infty]$.

\begin{pro}\label{proposition:la top de la medida esta dada por una norma en acotados}
Let $(\m,\tau)$ be a semifinite von Neumann algebra. For $s>0$ let $\|\cdot\|_{(s)}$ be the norm given by
\[
\|x\|_{(s)}=\inf\{\|x_1\|_1^{\phantom1}+s\,\|x_2\|:\ x=x_1+x_2,\ x_1,x_2\in\m\},\ \ x\in\m.
\]
Then $\|x\|_{(s)}=\int_0^s\nu_t(x)\,dt$, and the topology induced by $\|\cdot\|_{(s)}$ agrees
with the measure topology on bounded sets.
\end{pro}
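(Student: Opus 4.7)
The plan is to first prove the formula $\|x\|_{(s)}=\int_0^s\nu_t(x)\,dt$ via two matching inequalities, and then to compare both topologies by expressing convergence in each of them through the $\tau$-singular values, with boundedness entering only in the final dominated convergence step.

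For the inequality $\int_0^s\nu_t(x)\,dt\le \|x\|_{(s)}$, I would pick an arbitrary decomposition $x=x_1+x_2$ with $\|x_1\|_1<\infty$ and invoke the subadditivity of the $\tau$-singular values (see \cite{Fack1982,FackKosaki1986}): $\nu_{t+u}(x_1+x_2)\le \nu_t(x_1)+\nu_u(x_2)$. Taking $u=0$ gives $\nu_t(x)\le \nu_t(x_1)+\|x_2\|$; integrating on $[0,s]$ together with the layer-cake identity $\int_0^\infty \nu_t(y)\,dt=\tau(|y|)=\|y\|_1$ produces
\[
\int_0^s \nu_t(x)\,dt\le \|x_1\|_1+s\|x_2\|,
\]
and the infimum over decompositions yields the claim.

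For the reverse inequality, I would exhibit an explicit decomposition attaining the bound. Writing $x=u|x|$ in polar form and setting $c=\nu_s(x)=\lambda_s(|x|)$, let
\[
x_1=u\,(|x|-c)_+,\qquad x_2=u\,(|x|\wedge c),
\]
both defined by functional calculus; then $x=x_1+x_2$ with $|x_2|=|x|\wedge c$, so $\|x_2\|\le c$, and $|x_1|=(|x|-c)_+$. Since $(\,\cdot\,-c)_+$ is continuous and non-decreasing, the spectral scale transforms covariantly, giving $\nu_t(x_1)=(\nu_t(x)-c)_+$. Because $\nu_t(x)\ge c$ exactly for $t\le s$, one computes $\|x_1\|_1=\int_0^s\nu_t(x)\,dt-sc$, so $\|x_1\|_1+s\|x_2\|\le \int_0^s\nu_t(x)\,dt$, completing the equality.

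For the topological statement the key point is that $x_\alpha\to 0$ in $\cT$ if and only if $\nu_t(x_\alpha)\to 0$ for every $t>0$. One direction is the observation that if $\nu_\delta(x_\alpha)<\eps$ then the spectral projection $p^{|x_\alpha|}([0,\eps])$ witnesses $x_\alpha\in V(\eps,\delta)$. The other direction uses the parallelogram law: if $\|xp\|\le \eps$ and $\tau(p^\perp)\le \delta$, then $q=p^{|x|}((\eps,\infty))$ satisfies $p\wedge q=0$, hence $q\sim q'\le p^\perp$ and $\tau(q)\le \delta$, so $\nu_\delta(x)\le\eps$. Granted this characterization, the monotonicity inequality $t_0\,\nu_{t_0}(x)\le \|x\|_{(s)}$ for $0<t_0\le s$ shows that $\|x_\alpha\|_{(s)}\to 0$ forces $x_\alpha\to 0$ in $\cT$ without any boundedness hypothesis; conversely, if $\|x_\alpha\|\le M$ and $x_\alpha\to 0$ in $\cT$, then $\nu_t(x_\alpha)\to 0$ pointwise on $[0,s]$ with $\nu_t(x_\alpha)\le M$, so Lebesgue's dominated convergence theorem gives $\|x_\alpha\|_{(s)}\to 0$. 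The main obstacle is the spectral-projection argument underlying the measure-topology characterization, together with the covariance identity $\nu_t(x_1)=(\nu_t(x)-c)_+$ used in the upper bound; both are standard facts about the spectral scale in semifinite von Neumann algebras but are the places where the argument is genuinely non-routine.
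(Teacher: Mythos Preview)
Your argument is correct. For the formula $\|x\|_{(s)}=\int_0^s\nu_t(x)\,dt$ you supply the full proof (subadditivity for one inequality, the explicit cut-off decomposition $x=u(|x|-c)_++u(|x|\wedge c)$ for the other), whereas the paper simply cites the argument after \cite[Theorem~4.4]{FackKosaki1986}; your version is essentially that argument written out. One cosmetic point: ``$\nu_t(x)\ge c$ exactly for $t\le s$'' need not be literally true when $\nu$ has a plateau at level $c$, but this is harmless since $(\nu_t(x)-c)_+=0$ whenever $\nu_t(x)\le c$, which already holds for all $t\ge s$.

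For the topological comparison your route is genuinely different. You pass through the standard equivalence ``$x_\alpha\to 0$ in $\cT$ $\iff$ $\nu_t(x_\alpha)\to 0$ for every $t>0$'' and then use the monotonicity bound $t_0\,\nu_{t_0}(x)\le\|x\|_{(s)}$ in one direction and dominated convergence (legitimate here since both topologies are first countable, so sequences suffice) in the other. The paper instead compares the neighbourhood bases directly: from $\|x\|_{(1)}<d$ it extracts a projection via $\nu_{t_0}(x)=\inf\{\|xq\|:\tau(q^\perp)\le t_0\}$, and for the converse it observes that if $x\in V(\eps,\delta)$ with $\|x\|\le k$, then the very projection $q$ witnessing membership in $V(\eps,\delta)$ already gives a decomposition $x=xq^\perp+xq$ with $\|xq^\perp\|_1+\|xq\|\le k\delta+\eps$. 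This last step is a bit slicker than your dominated-convergence argument, since it reads the $\|\cdot\|_{(1)}$-estimate straight off the definition of $V(\eps,\delta)$ without invoking singular values at all; on the other hand your approach makes the role of the singular-value function more transparent and yields the unbounded direction (norm $\Rightarrow$ measure) with no extra work.
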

\begin{proof}
The equality $\|x\|_{(s)}=\int_0^s\nu_t(x)\,dt$ is proven in \cite{FackKosaki1986}
in the argument after Theorem
4.4. We now show that the topology induced by $\|\cdot\|_{(s)}$
and the measure topology agree on bounded sets. Indeed, if $0<s\leq r$ then there exists $k\in \NN$ such that $r\leq k\,s$ and therefore $\|x\|_{(s)}\leq \|x\|_{(r)}\leq k\, \|x\|_{(s)}$, since $t\mapsto\nu_t(x)$ is a non-increasing function. This shows that the norms $\|\cdot\|_{(s)}$, for $s>0$, are all equivalent and induce the same topology.
Hence we can assume without loss of generality that $s=1$.

 If $\|x\|_{(1)}<d$, then $\int_0^1\nu_t(x)\,dt<d$. Using that $\nu_t(x)$ is non-increasing, there
exists $t_0$ with $0<t_0<\sqrt{d}$ such that $\nu_{t_0}(x)<\sqrt{d}$.
By \cite[Proposition 2.2]{FackKosaki1986},
\begin{equation}\label{eq:definicion del lambda con la norma para positivos}
\nu_{t_0}(x)=\inf\{\|xq\|:\ \tau(q^\perp)\leq t_0\},
\end{equation}
 so there is a projection $q\in\cP(\m)$ such that $\|xq\|<\sqrt{d}$ and $\tau(q^\perp)
<\sqrt{d}$; that is, $x\in V(\sqrt{d},\sqrt{d})$. Thus every ball in the $\|\cdot\|_{(1)}$-topology
lies inside a neighborhood of 0 in the measure topology.

Conversely, if $x\in V(\e,\delta)$ and $\|x\|\leq k$,  there exists a projection $q\in\cP(\m)$
such that $\|xq\|<\e$, $\tau(q^\perp)<\delta$. Since $x=xq^\perp+xq$,
\[
\|x\|_{(1)}\leq\|xq^\perp\|_1^{\phantom1}+\|xq\|\leq k\delta+\e;
\]
that is, $V(\e,\delta)\cap \{x\in\m:\ \|x\|\leq k\}\subset \{x\in\m:\ \|x\|_{(1)}\leq k\delta+\e\}$.
\end{proof}

\begin{cor}\label{lem:topologia de la medida en un II_1}
Let $\cN$ be a II$_1$-factor with trace $\tau_\cN$, and let $\{x_j\}$ be a
bounded net. Then $x_j\xrightarrow{\|\cdot\|_1}x$ if and only if
$x_j\xrightarrow\cT x$.
\end{cor}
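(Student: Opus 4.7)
The plan is to obtain the corollary as a direct specialization of the preceding Proposition. Since a II$_1$-factor is in particular a $\sigma$-finite semifinite von Neumann algebra, the proposition applies; the task reduces to exhibiting a value of $s$ for which the norm $\|\cdot\|_{(s)}$ coincides with the trace norm $\|\cdot\|_1 = \tau_\cN(|\cdot|)$ on $\cN$.

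The natural choice is $s = c := \tau_\cN(I) < \infty$. In a finite factor the $\tau$-singular value function $\nu_t(x)$ vanishes for $t \geq c$ (the support projection of $|x|$ has trace at most $c$), and one has the standard trace identity $\tau_\cN(|x|) = \int_0^\infty \nu_t(x)\,dt$. Combining these,
\[
\|x\|_{(c)} = \int_0^c \nu_t(x)\,dt = \int_0^\infty \nu_t(x)\,dt = \tau_\cN(|x|) = \|x\|_1,
\]
so $\|\cdot\|_{(c)}$ is literally the trace norm on $\cN$.

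Applying the preceding proposition with $s = c$, its conclusion becomes precisely that the $\|\cdot\|_1$-topology and the measure topology $\cT$ agree on bounded subsets of $\cN$, which translated to bounded nets is the claimed equivalence. There is essentially no obstacle: the corollary is the specialization of the proposition to the II$_1$ case, where the interpolation norm $\|\cdot\|_{(s)}$ collapses to the trace norm as soon as $s \geq \tau_\cN(I)$. The only subtlety worth flagging is the sanity check that $\nu_t(x)$ vanishes past $\tau_\cN(I)$, which follows from $\tau_\cN(p^{|x|}(0,\infty)) \leq \tau_\cN(I)$ together with the monotonicity properties built into the definition of the upper spectral scale.
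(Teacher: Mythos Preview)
Your proof is correct and follows essentially the same approach as the paper: identify $\|\cdot\|_1$ with the interpolation norm $\|\cdot\|_{(s)}$ for $s=\tau_\cN(I)$ and then invoke the preceding Proposition. The paper's version simply takes the trace to be normalized ($\tau_\cN(I)=1$) and writes $\|x\|_1=\int_0^1\nu_t(x)\,dt=\|x\|_{(1)}$, which is your argument specialized to $c=1$.
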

\begin{proof}
For any $x\in\cN^{\rm sa}$ we have
$\|x\|_1=\tau_\cN(|x|)=\int_0^1\nu_t(x)\ ds$. Then $\|\cdot\|_1=\|\cdot\|_{(1)}$ and
Proposition
\ref{proposition:la top de la medida esta dada por una norma en acotados} yields
the result.
\end{proof}

We will often and without mention make use of the following properties of the measure topology.

\begin{cor}\label{lem:propiedades de T}Let $\cA\subset \m$  be a von Neumann subalgebra and
let $E_\cA$ be the trace preserving conditional expectation onto $\cA$.
Let $\{x_j\}\subset\m^{\rm sa}$, $\alpha,\beta\in\RR$ with $\alpha\,I\leq x_j\leq\beta\,I$ for
every $j$ and such that $x_j\xrightarrow\cT x$. Then
\begin{enumerate}
\item\label{lem:propiedades de T:1} $x\in\m^{\rm sa}$, and $\alpha\leq x\leq\beta$.
\item\label{lem:propiedades de T:3} $E_\cA(x_j)\xrightarrow\cT E_\cA(x)$.
\end{enumerate}
\end{cor}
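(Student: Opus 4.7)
The plan is to exploit Proposition~\ref{proposition:la top de la medida esta dada por una norma en acotados}, which identifies the measure topology $\cT$ on norm-bounded subsets of $\m$ with the topology induced by the norms $\|\cdot\|_{(s)}$. Since the net $\{x_j\}$ is norm-bounded by $\max(|\alpha|,|\beta|)$ and both the involution and $E_\cA$ preserve that bound, the entire argument reduces to manipulations with the seminorms $\|\cdot\|_{(s)}$ and their explicit infimum description.

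For \refe{lem:propiedades de T:1}, I would first settle selfadjointness. Because $|y|$ and $|y^*|$ are unitarily equivalent in $\m$ (via the polar decomposition of $y$), one has $\nu_t(y) = \nu_t(y^*)$ and hence $\|y^*\|_{(1)} = \|y\|_{(1)}$, so involution is $\|\cdot\|_{(1)}$-continuous. Applied to $x_j = x_j^*$ this yields $x = x^*$. To obtain $\alpha I \leq x \leq \beta I$, it suffices to show that the positive cone is $\cT$-closed on bounded sets, and then apply the result to the bounded nets $x_j - \alpha I \geq 0$ and $\beta I - x_j \geq 0$. The key observation is that for any $p_0 \in \cP(\m)$ with $s_0 := \tau(p_0) < \infty$, and any decomposition $y = y_1 + y_2$, the estimates $|\tau(p_0 y_1)| \leq \|y_1\|_1$ and $|\tau(p_0 y_2)| \leq s_0 \|y_2\|$ combine, after infimizing over all such decompositions, to give
\[
|\tau(p_0\, y)| \;\leq\; \|y\|_{(s_0)}.
\]
Hence $y \mapsto \tau(p_0 y)$ is $\cT$-continuous on bounded sets. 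Now if $z_j \geq 0$ is bounded with $z_j \xrightarrow{\cT} z$ but $z \not\geq 0$, I pick $\e>0$ such that $p^z((-\infty,-\e))$ is nonzero, and using $\sigma$-finiteness and semifiniteness of $(\m,\tau)$ choose a subprojection $p_0 \leq p^z((-\infty,-\e))$ with $0 < \tau(p_0) < \infty$. Then $\tau(p_0 z p_0) \leq -\e \tau(p_0) < 0$, while $\tau(p_0 z_j p_0) = \tau(p_0 z_j) \geq 0$ for every $j$ and $\tau(p_0 z_j) \to \tau(p_0 z) = \tau(p_0 z p_0)$ by the continuity just established, a contradiction.

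For \refe{lem:propiedades de T:3}, I would show that $E_\cA$ is contractive for every $\|\cdot\|_{(s)}$. Given $y = y_1 + y_2$ in $\m$, trace-preservation of $E_\cA$ gives $\|E_\cA(y_1)\|_1 \leq \|y_1\|_1$, while $E_\cA$ being a (normal) conditional expectation gives $\|E_\cA(y_2)\| \leq \|y_2\|$; summing and infimizing over decompositions yields $\|E_\cA(y)\|_{(s)} \leq \|y\|_{(s)}$. The net $\{E_\cA(x_j)\}$ remains norm-bounded by $\max(|\alpha|,|\beta|)$, so Proposition~\ref{proposition:la top de la medida esta dada por una norma en acotados} converts the estimate $\|E_\cA(x_j) - E_\cA(x)\|_{(1)} \leq \|x_j - x\|_{(1)} \to 0$ back into $E_\cA(x_j) \xrightarrow{\cT} E_\cA(x)$.

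The main subtlety is in \refe{lem:propiedades de T:1}: one must manufacture a $\cT$-continuous positive linear functional that would detect a failure of positivity in the limit, which is exactly what the finite-trace projection $p_0$ and the $\|\cdot\|_{(s_0)}$-bound on $\tau(p_0 \cdot)$ supply; everything else is a direct manipulation of the infimum description of $\|\cdot\|_{(s)}$.
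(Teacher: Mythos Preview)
Your argument is correct. Part \eqref{lem:propiedades de T:3} is essentially identical to the paper's proof: both show $E_\cA$ is $\|\cdot\|_{(1)}$-contractive by splitting $y=y_1+y_2$ and using $\|E_\cA(y_1)\|_1\le\|y_1\|_1$, $\|E_\cA(y_2)\|\le\|y_2\|$.

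For part \eqref{lem:propiedades de T:1} the two arguments differ. The paper works directly with the neighborhoods $V(\e,\delta)$: assuming $qxq\le -kq$ for some finite-trace $q$, it picks $p$ with $\|(qxq-qx_jq)p\|<k/3$ and $\tau(p^\perp)<\tau(q)/2$, then computes $\tau(pqx_jqp)$ by hand to reach a contradiction. Your route is more conceptual: you first isolate the inequality $|\tau(p_0\,y)|\le\|y\|_{(s_0)}$ (from the infimum description of $\|\cdot\|_{(s_0)}$), which exhibits $\tau(p_0\,\cdot)$ as a $\cT$-continuous linear functional on bounded sets, and then a single evaluation at a finite-trace subprojection of $p^z((-\infty,-\e))$ detects the failure of positivity. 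Your version is cleaner and reusable (the estimate $|\tau(p_0\,y)|\le\|y\|_{(\tau(p_0))}$ is a handy fact in its own right); the paper's version has the minor advantage of not invoking Proposition~\ref{proposition:la top de la medida esta dada por una norma en acotados} for this step, working straight from the definition of $\cT$.
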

\begin{proof}
In order to prove \eqref{lem:propiedades de T:1} first notice that if
$x_j\xrightarrow\cT x$ with $x_j\geq0$ for every $j$ then $x\in \m^{\rm sa}$; indeed, this follows from the facts that the operation of taking adjoint is continuous in the measure topology and that this topology is Hausdorff.
If $x\not\in\cM^+$, there exists a nonzero
projection $q\in\cM$
and $k\in\RR^+$ such that $q\,x\,q\leq(-k)\,q$. By replacing $q$ by a
smaller projection if necessary, we may assume
that $\tau(q)<\infty$. We have $q\,x_j\,q\xrightarrow\cT q\,x\,q$, so for $j$
 big enough
there exists
a projection $p$ such that $\|(q\,x\,q-q\,x_j\,q)p\|<k/3$ and $\tau(p^\perp)<\tau(q)/2$. Then $pqp\ne0$, since
\[
\tau(p\,q\,p)=\tau(p\,q)=\tau(q)-\tau(p^\perp q)\geq\tau(q)-\tau(q)/2=\tau(q)/2>0.
\]
We also get from above that $\tau(q)\leq2\tau(p\,q\,p)$.
But then $\tau(p\,q\,(x_j-x)\,q\,p)=\tau(q\,[q\,(x_j-x)\,q\,p])\leq\frac{k}3\tau(q)$, so
\begin{align*}
0&\leq\tau(p\,q\,x_j\,q\,p)=\tau(p\,q\,x\,q\,p)+\tau(p\,q\,(x_j-x)\,q\,p)\leq(-k)\tau(p\,q\,p)+\frac{k}3\,\tau(q)\\
&\leq(-k)\tau(p\,q\,p)+\frac{2k}3\tau(p\,q\,p)=-\frac{k}3\,\tau(p\,q\,p)<0,
\end{align*}
a contradiction. This shows that $x\ge0$. By linearity we get that
if $x_j\xrightarrow\cT x$ and
$\alpha\leq x_j\leq\beta$ then $\alpha\leq x\leq\beta$.

Item \eqref{lem:propiedades de T:3} follows from the fact that $E_\cA$ is contractive
with respect to $\|\cdot\|_{(1)}$ together with Proposition \ref{proposition:la top de la medida esta dada por una norma en acotados}. Indeed, it  is well known that $\|E_\cA(x)\|\leq\|x\|$ for $x\in \m$. Using that
$\tau(E_\cA(x)\,y)=\tau(x\,E_\cA(y))\leq\|E_\cA(y)\|\,\tau(|x|)$
we get
\[
\|E_\cA(x)\|_1=\sup\{|\tau(E_\cA(x)\,y)|:\ y\in\m,\ \|y\|\leq1\}\leq\|x\|_1.
\] For any decomposition $x=y+z$, since $E_\cA(x)=E_\cA(y)+E_\cA(z)$,
\[
\|E_\cA(x)\|_{(1)}\leq\|E_\cA(y)\|_1+\|E_\cA(z)\|\leq\|y\|_1+\|z\|.
\]So, by Proposition
\ref{proposition:la top de la medida esta dada por una norma en acotados},
$\|E_\cA(x)\|_{(1)}\leq\|x\|_{(1)}$ for all $x\in\m$, and so $E_\cA$ is $\cT$-continuous.
\end{proof}

\section{Majorization in $\ell^\infty(\NN)$ and $B(H)$ revisited}\label{section: B(H)}

Let $H$ be a complex separable Hilbert space. In this section we revise and complement A. Neumann's
\cite{Neumann1999} theory on majorization
between self-adjoint operators in $B(H)$.
These results will play a key role in our proof of the Schur-Horn theorem in
II$_\infty$-factors (Theorem \ref{teo:SH}). For conceptual and notational convenience,
we shall follow the exposition in \cite{AntezanaMasseyRuizStojanoff2009} (see also \cite{Kadison2004}).

In $B(H)$ we consider the canonical trace $\Tr$.
We write $\U(H)$  for the group of unitary operators in $H$,
and $\cC(H)$ for the semigroup of contractive operators in $B(H)$,
i.e.
\[\cC(H)=\{v\in B(H):\ v^*v\leq I \}.
\]

For $k\in \NN$, let $\cP_k$ be the set of orthogonal projections $p\in B(H)$ such that $\Tr(p)= k$.
For $b\in B(H)^{\rm sa}$, $k\in \NN$, we consider
\begin{equation}\label{eq:definicion de U_k y L_k en B(H)}
U_k(b)=\sup_{p\in \cP_k}\Tr(b\,p),\ \text{ and } \
L_k(b)=\inf_{p\in \cP_k}\Tr(b\, p).
\end{equation}
For each $k\in \NN$, both $b\mapsto U_k(b)$ and $b\mapsto L_k(b)$ are norm-continuous
in $B(H)$, with $L_k(b)=-U_k(-b)$. Moreover,
$U_k(u^*\, b\,u)=U_k(b)$ for every $b\in B(H)^{\rm sa}$,
$u\in \U(H)$.

Following \cite{Neumann1999} (but with a different notation) we define, for
$f\in\ell^\infty(\NN)$ and $k\in\NN$,
\begin{equation}\label{eq:definicion de U_k y L_k en ell infty}
U_k(f)=\sup\{\sum_{j\in K}f_j:\ |K|=k\}, \ \ \
L_k(f)=\inf\{\sum_{j\in K}f_j:\ |K|=k\}.
\end{equation}
Again, for each  $k\in \NN$, $L_k(f)=-U_k(-f)$. The similarity of the notations in
\eqref{eq:definicion de U_k y L_k en B(H)} and
\eqref{eq:definicion de U_k y L_k en ell infty} is justified by the following fact: if
$b\in\bh$ is selfadjoint and such that there exists an orthonormal basis
$\{e_i\}_{i\in \NN}$ of $H$ and $f=(f_i)_{i\in \NN}\in \ell^\infty_\RR(\NN)$ such that
 $be_i= f _i\,e_i$, $i\in \NN$ (i.e. if $b$ is diagonal), then
by \cite[Proposition 3.3]{AntezanaMasseyRuizStojanoff2009}
 \begin{equation} \label{eq:caracterizacion del U_k de Neumann}
 U_k(b)=U_k(f),\ \
L_k(b)=L_k(f), \ \ \ k\in \NN\,.\end{equation}
\begin{fed}[Operator majorization in $B(H)$ \cite{AntezanaMasseyRuizStojanoff2009}]
\label{def:mayorizacion de Neumann}
Let  $a,\,b\in B(H)^{\rm sa}$. We say that:
\begin{enumerate}
\item $a$ is \emph{submajorized} by $b$,
denoted $a\prec_w b$, if $U_k(a)\leq U_k(b)$ for every $k\in \NN$;
\item $a$ is \emph{majorized} by $b$, denoted $a\prec b$, if $a\prec_w b$ and $L_k(a)\geq L_k(b)$ for every $k\in \NN$.
\end{enumerate}
\end{fed}
We will also use the notion of vector majorization in $\ell^\infty_\RR(\NN)$ (used
implicitly in \cite{Neumann1999}) as follows:
\begin{fed}[Vector majorization in $\ell^\infty_\RR(\NN)$]
\label{def:mayorizacion de Neumann original}
Let $f,\, g\in \ell^\infty_\RR(\NN)$. We say that:
\begin{enumerate}
\item $f$ is \emph{submajorized} by $g$, denoted $f\prec_w g$ if $U_k(f)\leq U_k(g)$ for every $k\in \NN$;
\item $f$ is \emph{majorized} by $g$, denoted $f\prec g$, if $f\prec_w g$ and $L_k(f)\geq L_k(g)$ for
every $k\in\NN$.
\end{enumerate}
\end{fed}

We fix an orthonormal basis $\mathcal B=\{e_i\}_{i\in \NN}$ on $H$, with associated
system of matrix units
$\{e_{ij}\}_{i,j\in \NN}$ in $B(H)$. For each $f\in \ell^\infty(\mathbb N)$ we denote by $M_f\in B(H)$
the induced diagonal operator with respect to $\mathcal B$, i.e.
$M_f=\sum_{i\in \NN} f_i\, e_{ii}$. By \eqref{eq:caracterizacion del U_k de Neumann},
it is immediate that for all $f,g\in\ell^\infty_\RR(\NN)$,
\begin{equation}\label{eq:equivalencia de las dos mayorizaciones}
M_f\prec M_g\iff f\prec g,\ \ \ M_f\prec_w M_g\iff f\prec_w g.
\end{equation}

We denote by $P_D:B(H)\to B(H)$ the trace
preserving conditional expectation onto the (discrete) diagonal masa with
respect to the fixed orthonormal basis. Explicitly, for each $x\in B(H)$,
\[
P_D(x)=\sum_i e_{ii}\,x\,e_{ii}=\sum_{i} f_i\ e_{ii}=M_f\,,\ \  \
\mbox{where }f_i=\langle x\,e_i,e_i\rangle, \ i\in \NN.
\]

We will use the following result from Neumann, which is a combination of
\cite[Theorem 2.18]{Neumann1999} and \cite[Theorem 3.13]{Neumann1999}.
Although Neumann's result is phrased in terms of vectors in $\ell^\infty_\RR(\NN)$,
we phrase the result in terms
of operators in $B(H)$,
as in \cite[Theorem 3.10]{AntezanaMasseyRuizStojanoff2009}.

\begin{teo}[A Schur-Horn theorem for $B(H)$]\label{theorem:Neumann 2.18-3.13}
Let $H$ be a separable complex Hilbert space and
let $P_D$ denote the unique trace preserving conditional expectation onto the discrete
masa of diagonal operators with respect to the orthonormal basis $\cB$ of $H$.
Then, for $b\in B(H)^{\rm sa}$,
\[\overline{\{P_D(u \, b\, u^*): \ u\in \U(H)\}}^{\, \|\, \|}
=\{M_f:\ f\in \ell^\infty_\RR(\NN),\, M_f\prec b \}\, .
\] \end{teo}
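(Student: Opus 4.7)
The plan is to establish the two inclusions separately. The inclusion $\subseteq$ is a Schur-type statement combined with a closedness argument, while $\supseteq$ is a Horn-type construction producing unitary conjugates whose $P_D$-images approximate any prescribed majorized $M_f$.

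For the easy inclusion, I would observe that $P_D(ubu^*) = M_f$ for some $f\in\ell^\infty_\RR(\NN)$, and for any $K\subset\NN$ with $|K|=k$, the projection $p_K = \sum_{i\in K} e_{ii}$ lies in $\cP_k$, is fixed by $P_D$, and hence by self-adjointness of the conditional expectation with respect to $\Tr$,
\[
\sum_{i\in K} f_i = \Tr(p_K M_f) = \Tr(p_K \, ubu^*) \leq U_k(ubu^*) = U_k(b),
\]
where the last equality uses unitary invariance of $U_k$. Taking the supremum over $K$ and applying \eqref{eq:caracterizacion del U_k de Neumann} gives $U_k(M_f)\leq U_k(b)$; the analogous argument with infima yields $L_k(M_f)\geq L_k(b)$, so $M_f \prec b$. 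Norm-closedness of the right-hand set then follows from the stated norm-continuity of $U_k$ and $L_k$ together with the norm-closedness of the diagonal masa in $B(H)$.

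For the hard inclusion, I would proceed in three reductions. First, the Weyl--von Neumann--Berg theorem allows one to write $b = M_g + c$ with $M_g$ diagonal in $\cB$ and $\|c\|$ as small as desired; since norm-continuity of the functionals $U_k, L_k$ propagates majorization under such perturbations, it suffices to treat $b = M_g$. Second, by norm-approximating $f$ and $g$, reduce to the case where off a fixed constant level---chosen from $\sess(b)$---both sequences take only finitely many distinct values. Third, on the finite non-constant support, invoke the classical finite-dimensional Schur--Horn theorem, implemented through Hardy--Littlewood--Polya T-transforms, namely $2\times 2$ block rotations that redistribute mass within pairs of coordinates, to produce a unitary $u$ with $P_D(u M_g u^*)$ matching $f$ up to a prescribed error.

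The principal obstacle is passing from the finite construction to the full sequence: it is generally \emph{not} true that initial segments of $f$ are majorized by the corresponding segments of $g$, so a naive truncation argument fails. The crucial device is to use the essential spectrum of $b$ as a reservoir of infinitely many coordinates carrying nearly-equal diagonal values, into which the ``excess mass'' produced by each T-transform can be absorbed without disturbing coordinates fixed at earlier stages. By arranging the sequence of T-transforms so that the uniform error decays geometrically in $N$, the diagonals $P_D(u_N M_g u_N^*)$ converge in operator norm to $M_f$. This essential use of $\sess(b)$ also clarifies why the norm-closure on the left-hand side cannot in general be removed.
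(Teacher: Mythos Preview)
The paper does not give a proof of this theorem: it is quoted as a known result of Neumann, explicitly described as ``a combination of \cite[Theorem 2.18]{Neumann1999} and \cite[Theorem 3.13]{Neumann1999}'' and used as a black box. So there is no argument in the paper to compare your proposal against.

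That said, your outline is in the spirit of Neumann's original proof. The easy inclusion is correct as you have written it. For the hard inclusion, your three reductions---Weyl--von Neumann diagonalization to replace $b$ by a diagonal $M_g$, approximation of $f$ and $g$ by sequences taking finitely many values off a level chosen from $\sess(b)$, and then a finite Schur--Horn construction via $T$-transforms with the essential spectrum acting as an infinite reservoir---do capture the essential mechanism. The paper itself later uses exactly your first reduction (see Remark~\ref{w-vn} and the proof of Theorem~\ref{theorem:neumann contractivo}) when deriving the contractive analogue. Be aware, though, that your sketch is vague exactly where the real work is: organizing the $T$-transforms so that mass can be parked in and withdrawn from the reservoir without disturbing previously fixed coordinates, and so that the diagonals converge \emph{in operator norm}, is a delicate inductive construction that occupies most of Neumann's argument. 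Your phrase ``arranging the sequence of $T$-transforms so that the uniform error decays geometrically'' is a statement of what is needed, not a proof that it can be done.
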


\bigskip

As a consequence of Theorem \ref{theorem:Neumann 2.18-3.13} and \eqref{eq:equivalencia de las dos mayorizaciones}
we recover Neumann's result for majorization in $\ell^\infty_\RR(\NN)$
 which states that, for
$f,\,g\in \ell^\infty_\RR(\NN)$,
\begin{equation}\label{theorem:Neumann 2.18-3.13d}
M_f\in \overline{\{P_D(u\,  M_g \, u^*): \ u\in \U(H)\}}^{\, \|\, \|} \
\text{ if and only if } \ f\prec g\,.
\end{equation}

\medskip

In the rest of this section we will
develop a contractive version of Theorem \ref{theorem:Neumann 2.18-3.13} for positive
operators of $B(H)$ (Theorem \ref{theorem:neumann contractivo}). We will need a few
preliminary results.

 A proof of the following elementary inequality can be found in \cite[Lemma 24]{Kadison2004}.
\begin{lem}\label{lemma:la desigualdad numerica} Let $y_1\geq y_2\geq\cdots$
be positive real numbers and $\alpha_1,\alpha_2,\ldots\in[0,1]$ with
$\sum_{j=1}^\infty\alpha_j\leq k$. Then
\begin{equation}\label{eq:la desigualdad numerica}
\sum_{j=1}^\infty\alpha_j\ y_j\leq\sum_{j=1}^ky_j.
\end{equation}
\end{lem}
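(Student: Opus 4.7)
The plan is to show that the difference
\[
\sum_{j=1}^k y_j \ -\ \sum_{j=1}^\infty \alpha_j y_j
\]
is non-negative by a direct comparison, using only the monotonicity of the sequence $(y_j)$ and the constraint $\sum_j \alpha_j \leq k$. First, I would split the range of summation at the index $k$ and rewrite the difference as
\[
\sum_{j=1}^k y_j - \sum_{j=1}^\infty \alpha_j y_j \ =\ \sum_{j=1}^k (1-\alpha_j)\, y_j \ -\ \sum_{j=k+1}^\infty \alpha_j\, y_j.
\]
Both sums on the right are non-negative, so the series $\sum_j \alpha_j y_j$ is automatically convergent: the head is bounded by $\sum_{j=1}^k y_j$, and the tail is bounded by $y_{k+1}\sum_{j>k}\alpha_j \leq k\, y_k$.

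Next, I would apply the monotonicity of $(y_j)$ separately to each piece. For $j\leq k$, the inequality $y_j \geq y_k$ together with $1-\alpha_j \geq 0$ yields $(1-\alpha_j) y_j \geq (1-\alpha_j) y_k$; for $j > k$, the inequality $y_j \leq y_k$ together with $\alpha_j \geq 0$ yields $\alpha_j y_j \leq \alpha_j y_k$. Factoring out $y_k$ from both estimates gives
\[
\sum_{j=1}^k (1-\alpha_j) y_j - \sum_{j=k+1}^\infty \alpha_j y_j \ \geq\ y_k\left( \sum_{j=1}^k (1-\alpha_j) - \sum_{j=k+1}^\infty \alpha_j \right) \ =\ y_k\left(k - \sum_{j=1}^\infty \alpha_j\right),
\]
which is $\geq 0$ by hypothesis.

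The only conceptual point is the choice of $y_k$ as the common comparison value for both halves of the sum; once this cutoff is identified, the rest is a one-line rearrangement, and there are no real convergence issues because all positive series involved are dominated by $\sum_{j=1}^k y_j + k\,y_k < \infty$. In effect the argument is just a continuous analogue of the observation that, subject to $0\leq \alpha_j\leq 1$ and $\sum \alpha_j \leq k$, the linear functional $(\alpha_j)\mapsto \sum_j \alpha_j y_j$ is maximized at the extreme point $\alpha_1=\cdots=\alpha_k=1$, $\alpha_j=0$ for $j>k$.
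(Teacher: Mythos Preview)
Your argument is correct: splitting at the index $k$ and comparing both halves against $y_k$ is exactly the right idea, and your handling of convergence is fine since all terms are nonnegative. The paper does not actually prove this lemma; it simply cites \cite[Lemma 24]{Kadison2004}, so there is no in-paper proof to compare against, and your self-contained argument is essentially the standard one.
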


\begin{lem}\label{lemma:U_k contractivo}
For any $\gbeta\in\ell^\infty(\NN)^{+}$, $k\in\NN$ we have
\[
U_k(\gbeta)=\sup\{\Tr(M_\gbeta\,x):\ x\in \cC(H)^+,\ \Tr(x)\leq k\}\,.
\]
\end{lem}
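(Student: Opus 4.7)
The plan is to prove the two inequalities separately. For the inequality $U_k(\gbeta)\le\sup\{\Tr(M_\gbeta\,x):\ x\in\cC(H)^+,\ \Tr(x)\le k\}$, I would observe that for every $K\subset\NN$ with $|K|=k$, the projection $p_K=\sum_{j\in K}e_{jj}$ lies in $\cC(H)^+$, satisfies $\Tr(p_K)=k$, and gives $\Tr(M_\gbeta\,p_K)=\sum_{j\in K}\gbeta_j$. Taking the supremum over all such $K$ produces $U_k(\gbeta)$ on the left while the right-hand side dominates each $\Tr(M_\gbeta\,p_K)$.

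For the reverse inequality, fix $x\in\cC(H)^+$ with $\Tr(x)\le k$ and set $\alpha_i=\langle xe_i,e_i\rangle$ for $i\in\NN$. Since $0\le x\le I$ we have $\alpha_i\in[0,1]$, and since $\sum_i\alpha_i=\Tr(x)\le k$ the sequence $(\alpha_i)$ lies in $\ell^1$. Because $M_\gbeta$ is diagonal in the basis $\mathcal B$, a direct computation yields
\[
\Tr(M_\gbeta\,x)=\sum_{i\in\NN}\gbeta_i\,\alpha_i,
\]
and this series converges since it is dominated termwise by $\|\gbeta\|_\infty\,\alpha_i$ with $\sum_i\alpha_i\le k$.

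I would then reduce to finitely supported coefficients by truncation. Letting $S_N=\sum_{i=1}^N\gbeta_i\,\alpha_i$, monotone convergence gives $S_N\uparrow\sum_i\gbeta_i\,\alpha_i$, so it suffices to bound each $S_N$ by $U_k(\gbeta)$. For fixed $N$, choose a permutation $\pi$ of $\{1,\ldots,N\}$ so that $y_j:=\gbeta_{\pi(j)}$ is non-increasing, and set $\beta_j:=\alpha_{\pi(j)}\in[0,1]$; then $\sum_{j=1}^N\beta_j\le k$, and after extending the sequences by zeros if $N>k$, Lemma \ref{lemma:la desigualdad numerica} gives
\[
S_N=\sum_{j=1}^N y_j\,\beta_j\;\le\;\sum_{j=1}^{\min(N,k)} y_j.
\]
The right-hand side is a sum of $\min(N,k)$ entries of $\gbeta$ at distinct indices; since $\gbeta\ge0$, enlarging the index set to one of size exactly $k$ only increases the sum, so $S_N\le U_k(\gbeta)$ as required.

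The main obstacle here is essentially technical: Lemma \ref{lemma:la desigualdad numerica} is stated for a non-increasing indexing, while an arbitrary $\gbeta\in\ell^\infty(\NN)^+$ need not admit a non-increasing rearrangement (think of a sequence in which the supremum is attained infinitely often alongside infinitely many strictly smaller values). The truncation step circumvents this, since on any finite index set the decreasing rearrangement is automatic, and the convergence $S_N\to\Tr(M_\gbeta\,x)$ transfers the bound to the infinite sum.
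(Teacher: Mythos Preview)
Your proof is correct, and it is genuinely more elementary than the paper's. The paper proves the inequality $\Tr(M_g x)\le U_k(g)$ by first writing $x$ in its spectral decomposition $x=\sum_j\gamma_j h_j$, then perturbing $g$ to a nearby $g'$ whose entries below $\limsup_n g_n+\varepsilon$ are all replaced by the constant $\beta=\limsup_n g_n$; this forces $g'$ to have only finitely many ``large'' entries, so a permutation (unitary conjugation) produces a globally non-increasing $g''$ to which Lemma~\ref{lemma:la desigualdad numerica} applies, at the cost of an $\varepsilon k$ error that must be sent to zero at the end.

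You bypass all of this by observing that $\Tr(M_g x)=\sum_i g_i\langle x e_i,e_i\rangle$ is a series of non-negative terms, so its partial sums $S_N$ increase to it; on each finite block $\{1,\dots,N\}$ the decreasing rearrangement of $(g_i)_{i\le N}$ is automatic, Lemma~\ref{lemma:la desigualdad numerica} applies directly, and the bound $S_N\le U_k(g)$ passes to the limit. This avoids both the spectral decomposition of $x$ and the $\varepsilon$-perturbation of $g$. (The phrase ``extending the sequences by zeros if $N>k$'' should presumably read ``to apply Lemma~\ref{lemma:la desigualdad numerica} as stated for infinite sequences'', covering both $N\le k$ and $N>k$; either way the argument goes through.) What the paper's route buys is a template that also works when the summands are not all non-negative, but for the present lemma your truncation is the cleaner argument.
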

\begin{proof}
The inequality ``$\leq$'' is clear by \eqref{eq:definicion de U_k y L_k en B(H)}
and \eqref{eq:caracterizacion del U_k de Neumann}.
To prove the reverse inequality, fix $k\in\NN$,
let $\e>0$, and fix $x\in \cC(H)^+$ with $\Tr(x)\leq k$. As $x$ is a compact and positive contraction,
$x=\sum_j\gamma_jh_j$, where $\{h_j\}_j$ is a pairwise-orthogonal
family of rank-one projections, $0\leq\gamma_j\leq 1$ for all $j$, and $\sum_j\gamma_j\leq k$.
We also have that $M_\gbeta=\sum_i\gbeta_ie_{ii}$, where $\{e_{ii}\}_i$ is
the pairwise-orthogonal family of rank-one projections associated with the canonical basis $\cB$.
Let $\beta=\limsup_{n}g_n= \max\sess(M_g)$ and define $\gbeta'\in\ell^\infty(\NN)$ by
\[
\gbeta'_i
=\left\{\begin{array}{ll}\gbeta_i&\mbox{ if }\gbeta_i\geq\beta+\e \\ \beta&\mbox{ otherwise}
\end{array}\right.
\]
Using \cite[Lemma 2.17]{Neumann1999} it is readily seen that $|U_k(\gbeta')-U_k(\gbeta)|<k\e$.
Since the entries in $\gbeta$ that are strictly greater than $\beta$ can only appear a
finite number of times, we have that the set $D=\{i:\ \gbeta'_i>\beta\}$ is finite. So there is a
unitary $u\in \U(H)$ (induced by an appropriate permutation) such that $\gbeta''$ given by
$M_{\gbeta''}=uM_{\gbeta'}u^*$ satisfies $\gbeta''_1\geq \gbeta''_2\geq\cdots\geq
\gbeta''_m$, where $m=|D|$, and $\gbeta''_i=\beta$ if $i>m$.
For each $j\in\NN$, let  $h_j'=u^*h_ju\,$;  then $\{h'_j\}_j$ is another family of
pairwise orthogonal  rank-one projections with sum I. We have
\[
\sum_i\left(\sum_j\gamma_j\Tr(e_{ii}\,h_j')\right)=\sum_j\gamma_j\Tr(h_j')=\sum_j\gamma_j\leq k
\]
and
\[
0\leq\sum_j\gamma_j\Tr(e_{ii}\,h_j')\leq\sum_j\Tr(e_{ii}\,h_j')=\Tr(e_{ii})=1.
\]
Since $x\geq0$ and $\gbeta\leq \gbeta'$,
\begin{equation}\label{eq:la traza de M_gx}
\Tr(M_\gbeta x)\leq\Tr(M_{\gbeta'}\,x)=\Tr(M_{\gbeta''}\,u^*\,x\,u)
=\sum_i\gbeta_i''\left(\sum_j\gamma_j\Tr(e_{ii}\,h_j')\right)
\end{equation}
Now, starting from \eqref{eq:la traza de M_gx} and applying the inequality
\eqref{eq:la desigualdad numerica}
 to the numbers $\gbeta_1''\geq\gbeta_2''\geq\cdots\geq0$ and
$\{\sum_j\gamma_j\Tr(e_{ii}\,h_j)\}_i$, we get
\begin{align*}
\Tr(M_\gbeta \,x)&\leq\sum_i\gbeta_i''\left(\sum_j\gamma_j\Tr(e_{ii}\,h_j')\right)
\leq\sum_{i=1}^k\gbeta_i''\\
&= U_k(\gbeta'')=U_k(g')<U_k(g)+\e k.
\end{align*}
As $\e$ and $x$ were arbitrary, we have proven
the reverse inequality.
\end{proof}

\begin{rem}\label{w-vn}Two operators $a,\, b \in  B(H)$ are said to be
{\it approximately unitarily equivalent} if there exists a sequence
$\{u_n\}_{n\in \NN}\subset \U(H)$
such that
\[
\lim_{n\rightarrow \infty}\|a- u_n\,b\,u_n^*\|=0\,.
\]
 This equivalence
is well-known to operator theorists and operator algebraists.
As a consequence of the Weyl-von Neumann
theorem, it follows \cite[II.4.4]{Davidson1996} that $a,\, b \in B(H)^{\rm sa}$ are
approximately unitarily equivalent if and only if their essential spectrums
(with respect to the classical Calkin algebra) coincide
and $\dim\ker(a-\lambda I)=\dim\ker(b-\lambda I)$ for every $\lambda$ that is not
in the essential spectrum of these operators. From this it can be deduced
(see the proof of \cite[II.4.4]{Davidson1996}) that for every $b\in B(H)^{+}$
and every orthonormal basis $\cB$ of $H$, there exists $M_g\in B(H)^{+}$
-- diagonal with respect to $\cB$ -- that is
approximately unitarily equivalent to $b$.
\end{rem}

The following is the main result of this section.

\begin{teo}[A contractive Schur-Horn theorem for $B(H)$]
\label{theorem:neumann contractivo}Let $H$ be a separable complex Hilbert space and
let $P_D$ denote the unique trace preserving conditional expectation onto
the discrete masa of diagonal operators with respect to the orthonormal basis $\cB$ of $H$.
Then, for $b\in B(H)^+$,
\[
\overline{\{ P_D(v\,b\,v^*):\ v\in \cC(H)\}}^{\, \|\,\|}
= \{M_f:\ f\in \ell^\infty(\NN)^+,\, M_f\prec_w b\}\,.
\]
\end{teo}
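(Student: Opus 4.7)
The plan is to prove the two inclusions of the theorem separately, with $(\supseteq)$ requiring the bulk of the work via a reduction to the unitary Schur--Horn theorem (Theorem \ref{theorem:Neumann 2.18-3.13}) through a lifting trick.

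For the inclusion $(\subseteq)$, I would fix $v\in\cC(H)$ and $p\in\cP_k$. Since $P_D$ is a trace-preserving conditional expectation onto the diagonal masa, I can write $\Tr\bigl(P_D(vbv^*)p\bigr)=\Tr\bigl(vbv^*\,P_D(p)\bigr)=\Tr\bigl(b\cdot v^*P_D(p)v\bigr)$. Because $0\leq P_D(p)\leq I$ with $\Tr(P_D(p))=k$, the operator $y:=v^*P_D(p)v$ lies in $\cC(H)^+$ with $\Tr(y)=\Tr(P_D(p)\,vv^*)\leq k$. Combining this with the identity $U_k(b)=\sup\{\Tr(by):y\in\cC(H)^+,\,\Tr(y)\leq k\}$ — valid for every $b\in B(H)^+$ via spectral decomposition of the (trace-class, hence compact) $y$ and Lemma \ref{lemma:la desigualdad numerica}, following exactly the argument of Lemma \ref{lemma:U_k contractivo} — yields $U_k(P_D(vbv^*))\leq U_k(b)$ for every $k$, hence $P_D(vbv^*)\prec_w b$. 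Norm continuity of each $U_k$ then transfers this to the norm closure.

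For $(\supseteq)$, suppose $M_f\prec_w b$ with $f\in\ell^\infty(\NN)^+$. The main idea is to find $\tilde f\geq f$ satisfying the \emph{full} majorization $M_{\tilde f}\prec b$; Theorem \ref{theorem:Neumann 2.18-3.13} will then give unitary approximations to $M_{\tilde f}$, and a diagonal contraction will push these down to $M_f$. First I reduce to $b=M_g$ diagonal: by Remark \ref{w-vn}, $b=\lim_n w_nM_gw_n^*$ in norm with $w_n\in\U(H)$, and the bound $\|P_D\bigl((vw_n^*)b(vw_n^*)^*\bigr)-P_D(vM_gv^*)\|\leq\|w_n^*bw_n-M_g\|\to 0$ shows that the closures on both sides of the theorem for $b$ coincide with those for $M_g$. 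Next, set $\alpha=\min\sess(M_g)=\liminf_i g_i$ and define $\tilde f_i=\max(f_i,\alpha)$, so $\tilde f\geq f$ and $\tilde f_i\geq\alpha$ for every $i$. I then verify $M_{\tilde f}\prec M_g$: for $L_k$, the bound $\tilde f\geq\alpha$ forces $L_k(M_{\tilde f})\geq k\alpha$, while $\alpha\in\sess(M_g)$ provides $k$-dimensional subspaces on which $M_g$ averages arbitrarily close to $\alpha$, so $L_k(M_g)\leq k\alpha$; for $U_k$, setting $m=|\{i:f_i>\alpha\}|$, the case $k\leq m$ is immediate since the top $k$ values of $\tilde f$ match those of $f$, whereas for $k>m$ one has $U_k(\tilde f)=U_m(f)+(k-m)\alpha\leq U_m(M_g)+(k-m)\alpha\leq U_k(M_g)$, the last inequality coming from enlarging a near-optimal rank-$m$ projection for $U_m(M_g)$ by $(k-m)$ further dimensions drawn from the (infinite-dimensional) spectral subspace of $M_g$ near $\alpha$.

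With $M_{\tilde f}\prec M_g$ in hand, Theorem \ref{theorem:Neumann 2.18-3.13} furnishes unitaries $u_n\in\U(H)$ with $P_D(u_nM_gu_n^*)\to M_{\tilde f}$ in norm. I then introduce the diagonal contraction $c=\sum_i c_i e_{ii}$ with $c_i=\sqrt{f_i/\tilde f_i}$ when $\tilde f_i>0$ and $c_i=0$ otherwise (the latter forces $f_i=0$ since $0\leq f_i\leq\tilde f_i$), so that $cM_{\tilde f}c^*=M_f$. Setting $v_n=cu_n\in\cC(H)$ and using that $c$ sits in the diagonal masa — whence $P_D(cxc^*)=cP_D(x)c^*$ — one obtains $P_D(v_nM_gv_n^*)=cP_D(u_nM_gu_n^*)c^*\to cM_{\tilde f}c^*=M_f$ in norm, placing $M_f$ in the desired closure. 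The main obstacle I foresee is the full-majorization step $M_{\tilde f}\prec M_g$, especially the inequality $U_k(M_g)\geq U_m(M_g)+(k-m)\alpha$; it relies essentially on the infinite-dimensionality of the spectral subspace of $M_g$ at the bottom of its essential spectrum and has no finite-dimensional analog, which is where the $B(H)$ setting is genuinely used.
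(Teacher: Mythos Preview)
Your proof is correct, and the $(\subseteq)$ inclusion matches the paper's approach (though the paper first reduces to diagonal $b=M_g$, so that Lemma~\ref{lemma:U_k contractivo} applies verbatim; your claim that the lemma extends to arbitrary $b\in B(H)^+$ is true but would need a line of justification, e.g.\ via Remark~\ref{w-vn}).

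For the harder inclusion $(\supseteq)$ your argument is genuinely different from the paper's. The paper passes to the doubled space $H\oplus H$ and perturbs: it sets $f'=(f+\e e)\oplus\e e$ and $g'=(g+\e e)\oplus 0$, so that $U_k(f')=U_k(f)+k\e$, $U_k(g')=U_k(g)+k\e$, $L_k(f')=k\e$, $L_k(g')=0$, whence $f'\prec g'$ is immediate; it then applies Theorem~\ref{theorem:Neumann 2.18-3.13} on $H\oplus H$ and compresses by $q=I\oplus 0$ to obtain a contraction on $H$. You instead stay in $H$: you lift $f$ to $\tilde f=\max(f,\alpha)$ with $\alpha=\min\sess(M_g)$, verify $M_{\tilde f}\prec M_g$ directly, apply Theorem~\ref{theorem:Neumann 2.18-3.13} in $H$, and then push down by the diagonal contraction $c$ with $c_i=\sqrt{f_i/\tilde f_i}$. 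The trade-off is clear: the paper's doubling makes the full-majorization check trivial at the cost of an auxiliary space and an $\e$-perturbation, while your route is intrinsic and even identifies the approximating contractions as (diagonal contraction)$\times$(unitary), but requires the nontrivial inequality $U_k(M_g)\geq U_m(M_g)+(k-m)\alpha$, which---as you correctly flag---depends on the infinite-dimensionality of the essential spectral subspace of $M_g$ near $\alpha$. Both arguments are valid and of comparable length.
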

\begin{proof}
We first consider a reduction  to the case where $b$ is diagonalizable
with respect to the orthonormal basis $\cB$. Indeed, by
Remark \ref{w-vn} there exists $g\in \ell^\infty(\NN)^+$ such that $b$ and $M_g$ are
approximately unitarily equivalent. It is then straightforward to see that
\[
\overline{\{v\, b\,v^*:\ v\in \cC(H) \}}^{\, \|\,\|}
=\overline{\{v\, M_g\,v^*:\ v\in \cC(H) \}}^{\, \|\,\|}\,,
\]
and that
\begin{equation}\label{redu1}
\overline{\{ P_D(v^*\,b\,v):\ v\in \cC(H)\}}^{\, \|\,\|}
=\overline{\{ P_D(v^*\,M_g\,v):\ v\in \cC(H)\}}^{\, \|\,\|}\,.
\end{equation}
By \eqref{eq:caracterizacion del U_k de Neumann},
$U_k(b)=U_k(M_g)$ and $L_k(b)=L_k(M_g)$ for all $k\in \NN$.
These identities, together with \eqref{redu1}, imply that -- without loss of generality --
we can assume that $b=M_g$ for some $g\in\ell^\infty(\NN)^+$.

Let $v\in\cC(H)$ and let $p\in B(H)$ be a projection with $\Tr(p)=k$.
Since $vv^*\leq I$ and $0\leq P_D(p)\leq I$ we have $v^*P_D(p) \, v\in\cC(H)^+$ and
$\Tr(v^*\,P_D(p)\,v)=\Tr(P_D(p)^{1/2}\,vv^*\,P_D(p)^{1/2})\leq\Tr(P_D(p))=k$.
Put $M_\falpha=P_D(v\,M_\gbeta \,v^*)$.
Then
\begin{align*}
U_k(M_f)
&=\sup\{\Tr(P_D(v M_\gbeta v^*)\,p):\, \Tr(p)=k\}\\
&=\sup\{\Tr((v M_\gbeta v^*)\,P_D(p)):\, \Tr(p)=k\}\\
&=\sup\{\Tr( M_\gbeta\, (v^*P_D(p)\,v)):\, \Tr(p)=k\}
\leq U_k(M_\gbeta),
\end{align*}
where in the last inequality we are using Lemma \ref{lemma:U_k contractivo} and the fact
that $v^*P_D(p)\,v\in\cC(H)^+$. Thus, $M_\falpha\prec_w M_\gbeta$ and, as $U_k(\cdot)$
is norm-continuous for every $k\in \NN$, we get the inclusion ``$\subset$''.

For the reverse inclusion, assume that $M_\falpha\prec_w M_\gbeta$ (i.e., $\falpha\prec_w\gbeta$)
and let $\e>0$.
We follow the idea of the proof
of \cite[Theorem II.2.8]{Bhatia1997}.
Consider $\falpha',\gbeta'\in\ell^\infty(\NN)\oplus\ell^\infty(\NN)$, given by
\[
\falpha'=(\falpha+\e\,e)\oplus\, \e\,e,\ \ \ \gbeta'=(\gbeta+\e\,e)\oplus\, 0\,.
\]
where $e\in\ell^\infty(\NN)$ is the identity.
Note that $\|f\oplus 0 -f'\|_\infty,\,\|g\oplus 0 -g'\|_\infty< \e$. Since $f,g\geq0$, we have
$U_k(\falpha')=U_k(f)+k\e$, $U_k(\gbeta')=U_k(\gbeta)+k\e$, $L_k(\falpha')
=k\e$, $L_k(\gbeta')=0$, for all $k\in\NN$. Hence we have $\falpha'\prec \gbeta'$. By
Theorem \ref{theorem:Neumann 2.18-3.13}, there exists a unitary operator $u\in B(H\oplus H)$ such
that
\begin{equation}\label{eq:SH para los doble primas}
\|M_{\falpha'}-P_{D\oplus D}(u\, M_{\gbeta'}\,u^*)\|<\e.
\end{equation}
We have
 \begin{equation}\label{eq:aproximacion en las diagonales}
\|M_{\gbeta\oplus0}-M_{\gbeta'}\|<\e,\ \ \ \|M_{\falpha\oplus0}-M_{\falpha'}\|<\e.
\end{equation}
Now let $q=I\oplus0\in B(H\oplus H)$, and let $c=quq$ (clearly a contraction),
seen as an operator in $B(H)$. Then,
as $q\,P_{D\oplus D}=P_D\oplus0$ and
$q\,M_{\falpha\oplus0}=q\,M_{\falpha\oplus0}\,q=M_{\falpha\oplus0}$, we can use
\eqref{eq:SH para los doble primas} and \eqref{eq:aproximacion en las diagonales} to get
\begin{align*}
\|M_{\falpha}-P_D(c \,M_{\gbeta}\,c^*)\|
&=\|q(M_{\falpha\oplus0}-P_{D\oplus D}(u\,M_{\gbeta\oplus0}\,u^*))q\|\\
&\leq\|M_{\falpha\oplus0}-P_{D\oplus D}(u\,M_{\gbeta\oplus0}\,u^*)\|\\
&<2\e+\|M_{\falpha'}-P_{D\oplus D}(u \,M_{\gbeta'}\,u^*)\|<3\e.
\end{align*}
As $\e$ was arbitrary, we conclude that $M_f\in\overline{\{ P_D(v^*M_g\,v):\ v\in \cC(H)\}}^{\, \|\,\|}$.
\end{proof}

\begin{rem}
The positivity assumption  in Theorem \ref{theorem:neumann contractivo}
is not just a technicality: even in dimension one we have $-1\prec_w 0$, and
$\{v\,0\,v^*: |v|\leq1\}=\{0\}$.
\end{rem}
As a consequence of Theorem \ref{theorem:neumann contractivo}
we get that, for $f,\,g\in \ell^\infty(\NN)^+$,
\begin{equation}\label{theorem:neumann contractivoeq}
M_f\in \overline{\{P_D(v  \,M_g \, v^*): \ v\in \cC(H)\}}^{\, \|\, \|} \ \text{ if and only if } \ f\prec_w g\,.
\end{equation}

\section{Majorization in II$_\infty$-factors}\label{section:majorization}

Recall that $(\m,\tau)$ denotes a $\sigma$-finite and semi-finite diffuse
 von Neumann algebra.  Given $a\in \m^{\rm sa}$,  we consider the functions
$$U_t(a)=\int_0^t \lambda_s(a)\ ds \ \ \text{ and } \ \  L_t(a)=\int_0^t \mu_s(a)\ ds\,,
\ \ \ t\in\RR^+\, , $$
where $t\mapsto\lambda_t(a)$ and $t\mapsto\mu_t(a)$ denote  the upper and lower spectral scales
(Definition \ref{def:upper and lower spectral scale}).

Our next goal is to describe the maps $b\mapsto U_t(b)$ and $b\mapsto L_t(b)$ by means of
 \cite[Lemma 4.1]{FackKosaki1986}.
We will make use of the following relation between spectral scales and singular
 values:
 \begin{equation}\label{los mus y nus chus chus, saluds}
 \lambda_t(a)=\nu_t(a + \gamma I) - \gamma\, ,\ \  \mu_t(a)= \rho - \nu_t(-a+\rho I),  \ \ \
 a\in\m^{\rm sa},
 \end{equation}
 for any $\gamma,\,\rho\in \RR$ such that $a + \gamma I,\, -a+\rho I\in\m^+$.
We will denote by $\cP_t(\m)$ the set of all projections in $\m$ of trace $t$, i.e. \[\cP_t(\m)=\{p\in\cP(\m):\tau(p)=t\}.\]
Since $(\m,\tau)$ is diffuse and semifinite,  $\cP_t(\m)\neq \emptyset$ for every $t\geq 0$.

\begin{lem}\label{lem: u_t con proyecs}
For any $a\in \m^{\rm sa}$,
\[
U_t(a)=\sup\{\tau(a\,p):\ p\in\cP_t(\m)\}, \ \ \ L_t(a)=\inf\{\tau(a\,p):\ p\in\cP_t(\m)\},\
\ t\in\RR^+.
\]
\end{lem}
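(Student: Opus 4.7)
The plan is to reduce everything to the well-known Ky Fan / Fack--Kosaki maximum principle for positive operators, which is precisely \cite[Lemma 4.1]{FackKosaki1986}: for any $x\in\m^+$ and $t\in\RR^+$,
$$
\int_0^t \nu_s(x)\,ds \;=\; \sup\{\tau(xp):\ p\in\cP(\m),\ \tau(p)\leq t\}.
$$
Since $\cP_t(\m)\neq\emptyset$ (as $\m$ is diffuse and semifinite) and $p\mapsto \tau(xp)$ is non-decreasing in $p$ for $x\geq 0$, the supremum above is unchanged if we restrict to $p\in\cP_t(\m)$.

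To prove the formula for $U_t(a)$, first reduce to the positive case by shifting. Pick $\gamma\in\RR$ with $a+\gamma I\in\m^+$ (e.g.\ $\gamma=\|a\|$). By the first identity in \eqref{los mus y nus chus chus, saluds}, $\lambda_s(a)=\nu_s(a+\gamma I)-\gamma$, so
$$
U_t(a)=\int_0^t\lambda_s(a)\,ds=\int_0^t\nu_s(a+\gamma I)\,ds-\gamma t.
$$
Applying the Fack--Kosaki formula to $a+\gamma I\geq 0$ and using that $\tau((a+\gamma I)p)=\tau(ap)+\gamma t$ for any $p\in\cP_t(\m)$,
$$
\int_0^t\nu_s(a+\gamma I)\,ds=\sup\{\tau(ap):\ p\in\cP_t(\m)\}+\gamma t,
$$
and the additive shift by $\gamma t$ cancels, yielding the desired formula for $U_t(a)$.

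For $L_t(a)$ one proceeds analogously, either by applying the second identity in \eqref{los mus y nus chus chus, saluds} with $\rho\in\RR$ such that $-a+\rho I\in\m^+$, or more directly by noting that $\mu_s(a)=-\lambda_s(-a)$ implies $L_t(a)=-U_t(-a)$; combining this with the $U_t$ formula just established for the selfadjoint operator $-a$,
$$
L_t(a)=-\sup\{\tau((-a)p):\ p\in\cP_t(\m)\}=\inf\{\tau(ap):\ p\in\cP_t(\m)\}.
$$
The only subtlety is ensuring that the Fack--Kosaki supremum, a priori taken over projections of trace at most $t$, is actually attained on $\cP_t(\m)$; this is immediate from monotonicity of $p\mapsto\tau(xp)$ for $x\geq 0$ together with the fact that $(\m,\tau)$ is diffuse, so every projection can be enlarged to one of trace exactly $t$. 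No other step involves more than bookkeeping, and this reduction-by-shift is the main conceptual move.
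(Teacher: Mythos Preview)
Your proof is correct and follows essentially the same approach as the paper: reduce to the positive case via the shift identities \eqref{los mus y nus chus chus, saluds}, apply \cite[Lemma 4.1]{FackKosaki1986}, and observe that the $\gamma t$ terms cancel. Your explicit remark that the Fack--Kosaki supremum over $\tau(p)\leq t$ can be restricted to $\cP_t(\m)$ by diffuseness and monotonicity is a helpful clarification that the paper leaves implicit.
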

\begin{proof} The equalities are an immediate consequence of the identities
\eqref{los mus y nus chus chus, saluds} together with \cite[Lemma 4.1]{FackKosaki1986}
and the fact that, for every $t\in \RR^+$,
\[
\sup\{\tau(ap):\ p\in\cP_t(\m)\}=\sup\{\tau((a+\gamma \, I)\,p):\ p\in\cP_t(\m)\} -\gamma t\,.
\qedhere
\]
\end{proof}

\begin{rem}\label{rem:Ut caso comp} If $a\in \cK(\m)^+$,
then $\mu_t(a^+)=0$ for $t\in \RR^+$.
Let $\{e(t)\}_{t\in \RR^+}\subset \m$ be a complete flag for $a$ such that
$a=\int_0^\infty \lambda_t(a)\ de(t)$ (which exists by the assumptions on $\m$).
Then, using \cite[Proposition 2.7]{FackKosaki1986} and
\eqref{los mus y nus chus chus, saluds},  we have
\[
U_t(a)=\int_0^t \lambda_s(a)\ ds=\tau(a\, e(t))\ \ \text{ and } \ \ L_t(a)=0,
\ \ \ t\in \RR^+.
\]
Thus, for a positive $\tau$-compact operator $a$ the supremum in Lemma \ref{lem: u_t con proyecs}
is attained explicitly by means of the projection $e(t)$ in $\cP_t(\m)\cap\{a\}'$.
\end{rem}

\begin{lem}\label{lem:continuidad} Let $b\in \m^{\rm sa}$. Then, for each $t\in \RR^+$,
the functions
$b\mapsto U_t(b)$, $b\mapsto L_t(b)$ are $\|\cdot\|_1$-continuous, and they are also
$\cT$-continuous on bounded sets of $\m^{\rm sa}$ .
\end{lem}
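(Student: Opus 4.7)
The plan is to combine the variational formulas from Lemma \ref{lem: u_t con proyecs} with a uniform estimate for $\tau(yp)$ over projections of trace $t$, and then invoke Proposition \ref{proposition:la top de la medida esta dada por una norma en acotados} to transfer that estimate into both continuity statements.

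First I would prove the key inequality: for any $y\in\m$, any decomposition $y=x_1+x_2$, and any $p\in\cP_t(\m)$,
\[
|\tau(yp)|\le \|x_1p\|_1+\|x_2p\|_1\le \|x_1\|_1+t\,\|x_2\|,
\]
using $\|x_1 p\|_1\le \|x_1\|_1\|p\|\le \|x_1\|_1$ and $\|x_2 p\|_1\le \|x_2\|\,\tau(p)=t\,\|x_2\|$. Taking the infimum over decompositions of $y$,
\[
\sup_{p\in\cP_t(\m)}|\tau(yp)|\le \|y\|_{(t)}.
\]

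Applying this with $y=a-b$ together with the variational characterization of Lemma \ref{lem: u_t con proyecs}, I would conclude
\[
|U_t(a)-U_t(b)|\le \|a-b\|_{(t)},\qquad |L_t(a)-L_t(b)|\le \|a-b\|_{(t)}.
\]
Since the trivial decomposition $x_1=y$, $x_2=0$ yields $\|y\|_{(t)}\le \|y\|_1$, the $\|\cdot\|_1$-continuity of $b\mapsto U_t(b)$ and $b\mapsto L_t(b)$ is immediate. For $\cT$-continuity on bounded subsets of $\m^{\rm sa}$, I would invoke Proposition \ref{proposition:la top de la medida esta dada por una norma en acotados}, which identifies the topology induced by $\|\cdot\|_{(t)}$ with the measure topology on bounded sets.

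I do not anticipate any real obstacle. The only subtlety worth emphasizing is the choice of parameter $s=t$ in the norm $\|\cdot\|_{(s)}$, matched precisely to the trace of the projections appearing in the variational formulas of Lemma \ref{lem: u_t con proyecs}; this tuning is what produces a single estimate that simultaneously handles both topologies.
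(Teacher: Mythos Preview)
Your proposal is correct and follows essentially the same route as the paper: both arguments use the variational formula of Lemma~\ref{lem: u_t con proyecs} to bound $|U_t(a)-U_t(b)|$ by $\|a-b\|_{(t)}$, then invoke Proposition~\ref{proposition:la top de la medida esta dada por una norma en acotados} for the $\cT$-continuity on bounded sets and the trivial estimate $\|\cdot\|_{(t)}\le\|\cdot\|_1$ for the trace-norm continuity. The only cosmetic difference is that the paper obtains $|\tau((a-b)p)|\le\|a-b\|_{(t)}$ via $\tau(|a-b|p)\le U_t(|a-b|)=\int_0^t\nu_s(a-b)\,ds$, whereas you derive it directly from the infimum definition of $\|\cdot\|_{(t)}$.
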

\begin{proof} It is enough to prove the statement for $U_t(\cdot)$, since $L_t(b)=-U_t(-b)$.
Given $\e>0$, by Lemma \ref{lem: u_t con proyecs} there exists
$p\in\cP_t(\m)$ with $U_t(x)\leq\tau(xp)+\e$. Then
\[
U_t(x)-U_t(y)\leq\tau(xp)+\e-\tau(yp)\leq\|x-y\|_{(t)}+\e\leq \|x-y\|_1+\e\, ,
\] where we used the inequality $\tau((x-y)p)\leq \tau(|x-y|p)\leq\|x-y\|_{(t)}$ that follows from Lemma \ref{lem: u_t con proyecs}. By letting $\e\to0$ and reversing the roles of $x$ and $y$ we conclude the $\cT$ and $\|\cdot\|_1$ continuity of $b\mapsto U_t(b)$ on bounded sets, by Proposition \ref{proposition:la top de la medida esta dada por una norma en acotados}.
\end{proof}

From now on we will specialize $(\m,\tau)$ to be a $\sigma$-finite II$_\infty$-factor with faithful
normal semifinite tracial
weight $\tau$.

We  begin by describing the notion of majorization between selfadjoint operators in the
II$_\infty$-factor $\m$.  In the setting of non-finite von Neumann algebras,
this concept was developed for selfadjoint operators in \cite{Hiai1989}. Our presentation,
inspired by Neumann's work \cite{Neumann1999}, is fairly different (see Remark \ref{rem:tau compacto tiene L_t=0} below).

\begin{fed}\label{def:mayorizacion}
Let  $a,b\in\m^{\rm sa}$.
\begin{enumerate}
\item We say that $a$ is {\it submajorized} by $b$ (denoted $a\prec_w b$) if
\[
U_t(a)\leq U_t(b),\  \mbox{ for every }t\in\RR^+.
\]
\item We say that $a$ is {\it majorized} by $b$, denoted $a\prec b$, if $a\prec_w b$ and
\[
L_t(a)\geq L_t(b),\ \ \mbox{ for every }t\in\RR^+.
\]
\end{enumerate}
\end{fed}

\begin{rem}\label{rem:tau compacto tiene L_t=0}
If $b\in \cK(\m)^+$, then $\mu_t(b)=0$ for all $t\in \RR^+$ and
therefore $L_t(b)=0$ for all $t\in \RR^+$. Thus, if $a\in \m^+$ and $a\prec_wb$, then
$a\prec b$.

For $a,\,b\in \m^+$,
our notion of majorization is strictly stronger than the one considered in \cite{Hiai1987}.
As we have already mentioned, our notion of majorization does coincide with that of
\cite{Hiai1989} for selfadjoint operators in a II$_\infty$-factor (see Corollary \ref{corollary:Hiai}). It is worth pointing out
that in \cite{Hiai1989} majorization is described (for normal operators)
in terms of
Choquet's theory on comparison of measures, rather than in the simple terms used above:
Lemma \ref{lem: u_t con proyecs} shows that the notion of majorization in
a II$_\infty$-factor from definition \ref{def:mayorizacion}
is an analogue of the notion of operator majorization in $B(H)$
as described in Definition \ref{def:mayorizacion de Neumann}.
\end{rem}

For a fixed $b\in\m^{\rm sa}$, we write $\Omega_\m(b)$ for the set of all elements
in $\m^{\rm sa}$ that are majorized by $b$, i.e.
\[
\Omega_\m(b)=\{a\in \m^{\rm sa}:\ a\prec b\}.
\]

\begin{pro}\label{pro:mayo preservada}
Let $b\in\m^{\rm sa}$. Then  $\Omega_\m(b)$ is a
bounded $\cT$-closed convex set that contains the unitary orbit $\U_\m(b)$.
\end{pro}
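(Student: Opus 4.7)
The plan is to verify each of the four claims in turn, exploiting the variational formulas from Lemma \ref{lem: u_t con proyecs} together with the continuity results of Lemma \ref{lem:continuidad} and Corollary \ref{lem:propiedades de T}.

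First, for the inclusion $\U_\m(b)\subset\Omega_\m(b)$: given $u\in\um$, the map $p\mapsto u^*pu$ is a bijection of $\cP_t(\m)$ onto itself, since $\tau(u^*pu)=\tau(p)$. Using $\tau(ubu^*p)=\tau(b\,u^*pu)$ and Lemma \ref{lem: u_t con proyecs}, I obtain $U_t(ubu^*)=U_t(b)$ and (by the same argument or via $L_t(x)=-U_t(-x)$) $L_t(ubu^*)=L_t(b)$ for every $t\in\RR^+$, so $ubu^*\prec b$.

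Next, for boundedness I claim $\|a\|\leq\|b\|$ whenever $a\prec b$. Because $t\mapsto\lambda_s(a)$ is non-increasing and right-continuous,
\[
\lambda_0(a)=\lim_{t\to 0^+}\frac{1}{t}\int_0^t \lambda_s(a)\,ds=\lim_{t\to 0^+}\frac{U_t(a)}{t},
\]
and similarly $\mu_0(a)=\lim_{t\to 0^+}L_t(a)/t$. Dividing the defining inequalities by $t$ and passing to the limit yields $\lambda_0(a)\leq\lambda_0(b)$ and $\mu_0(a)\geq\mu_0(b)$. Since $\mu_0(a)I\leq a\leq\lambda_0(a)I$ and $\mu_0(b),\lambda_0(b)\in[-\|b\|,\|b\|]$, I conclude $-\|b\|I\leq a\leq\|b\|I$, hence $\Omega_\m(b)$ is norm-bounded by $\|b\|$.

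Convexity is immediate from the variational formulas: $U_t(x)=\sup_{p\in\cP_t(\m)}\tau(xp)$ is a supremum of linear functionals of $x$, hence convex, while $L_t$ is concave as an infimum of linear functionals. So if $a_1,a_2\in\Omega_\m(b)$ and $\alpha\in[0,1]$, setting $a=\alpha a_1+(1-\alpha)a_2$,
\[
U_t(a)\leq \alpha U_t(a_1)+(1-\alpha)U_t(a_2)\leq U_t(b),
\]
and symmetrically $L_t(a)\geq L_t(b)$, so $a\in\Omega_\m(b)$.

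Finally, for $\cT$-closedness, let $\{a_j\}\subset\Omega_\m(b)$ be a net with $a_j\xrightarrow{\cT} a$. Since $-\|b\|I\leq a_j\leq\|b\|I$ uniformly by the boundedness step, Corollary \ref{lem:propiedades de T}(i) gives $a\in\m^{\rm sa}$ and $-\|b\|I\leq a\leq\|b\|I$. Because the net is uniformly norm-bounded, Lemma \ref{lem:continuidad} applies and yields $U_t(a)=\lim_j U_t(a_j)\leq U_t(b)$ and $L_t(a)=\lim_j L_t(a_j)\geq L_t(b)$ for every $t\in\RR^+$, so $a\prec b$. No step here is genuinely hard; the only point requiring care is to verify that $\Omega_\m(b)$ is norm-bounded \emph{before} invoking Corollary \ref{lem:propiedades de T} and Lemma \ref{lem:continuidad}, both of which require boundedness hypotheses.
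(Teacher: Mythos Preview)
Your proof is correct and follows essentially the same approach as the paper: both use the limits $\lim_{t\to0^+}U_t(\cdot)/t$ and $\lim_{t\to0^+}L_t(\cdot)/t$ for boundedness, the variational description of $U_t,L_t$ as sup/inf of linear functionals for convexity, and Lemma \ref{lem:continuidad} for $\cT$-closedness. The only cosmetic differences are the order of the four verifications and that the paper establishes unitary invariance directly via $\lambda_t(ubu^*)=\lambda_t(b)$ rather than through the bijection $p\mapsto u^*pu$ on $\cP_t(\m)$; your added remark that boundedness must be checked before invoking the continuity lemmas is a nice point of care that the paper leaves implicit.
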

\begin{proof}\noindent
For any $x\in \m^{\rm sa}$, the definition of $U_t(x)$ and $L_t(x)$, together with the
right-continuity of $\lambda_t(x)$ and $\mu_t(x)$, imply that
\[
\lim_{t\rightarrow 0^+} \frac{U_t(x)}{t}=\lambda_t(0)=\max\sigma(x)\ \
\text{ and } \ \ \lim_{t\rightarrow 0^+} \frac{L_t(x)}{t}=\mu_t(0)=\min\sigma(x).
\]
Hence, $a\prec b$ implies $\sigma(a)\subset[\min\sigma(b),\max\sigma(b)]$; in particular
$\|a\|\leq \|b\|$, so $\Omega_\m(b)$ is a bounded set.
Lemma \ref{lem:continuidad} immediately implies that it is closed in the measure topology.
Moreover, if $u\in\um$, it is easy to see that
$\lambda_t(ubu^*)=\lambda_t(b)$. So  $U_t(ubu^*)=U_t(b)$ and, similarly,
$L_t(ubu^*)=L_t(b)$. Thus $ubu^*\prec b$, and $\U_\m(b)\subset \Omega_\m(b)$.

Let $a_1,\,a_2\in \m^{\rm sa}$,  $\gamma\in [0,1]$, with $a_1\prec b$, $a_2\prec b$.
Using Lemma \ref{lem: u_t con proyecs},
\begin{align*}
U_t(\gamma\, a_1+(1-\gamma)\,a_2)
&=\sup\{\tau(p\,(\gamma \,a_1+(1-\gamma)\,a_2)):\ \tau(p)=t\}\\
&=\sup\{\gamma \,\tau(p\,a_1)+(1-\gamma)\,\tau(p\,a_2):\ \tau(p)=t\}\\
&\leq \gamma \,U_t(a_1)+(1-\gamma)\,U_t(a_2)\leq U_t(b)\,.
\end{align*}
Similarly, $$L_t(\gamma \,a_1+(1-\gamma)\,a_2)\geq \gamma\, L_t(a_1)+(1-\gamma)\, L_t(a_2)
\geq L_t(b)\, , $$ so $\gamma \,a_1+(1-\gamma)\,a_2\prec b$, and $\Omega_\m(b)$ is convex.
\end{proof}

\begin{rem}\label{definicion: bar b y underbar b}
Let $b\in\m^{\rm sa}$. The function $t\mapsto\lambda_{t}(b)$ is non-increasing and bounded;
therefore the numbers
$\lambda_{\max}^{\rm e}(b)=\lim_{t\rightarrow \infty}\lambda_t(b)$ and
$\lambda_{\min}^{\rm e}(b)=\lim_{t\rightarrow\infty}\mu_t(b)$ exist. Indeed, we have
\begin{equation}\label{eq: limites en infinito}
\lambda_{\max} ^{\rm e} (b)=\max \ \sess(b)=\lim_{t\to\infty}\frac{U_t(b)}t\,
\ , \ \ \lambda_{\min} ^{\rm e} (b)=\min \ \sess(b)=\lim_{t\to\infty}\frac{L_t(b)}t\, .
\end{equation}
Consider the operators $\bar b$, $\underline b\in\m^+$ given by
\begin{equation}\label{eq: bar b y underbar b: def}
\bar b=(b-\lambda_{\max} ^{\rm e} (b)\,I)^+ \ \text{ and } \ \underline b=(\lambda_{\min} ^{\rm e} (b)\,I-b)^+  \, .
\end{equation}
Both $\bar b,\, \underline b$ are positive $\tau$-compact operators with  orthogonal support.
It is easy to check that,
for all $t\geq0$,
$U_t(b)=U_t(\bar b)+t\ \lambda_{\max}^{\rm e}(b)$,
$L_t(b)=-U_t(\underline b)+t\ \lambda_{\min}^{\rm e}(b)$,
 and $ L_t(\underline b)=L_t(\bar b)=0$.  If $a\prec b$ then, by \eqref{eq: limites en infinito},
 $$\lambda_{\min}^{\rm e}(b)\leq \lambda_{\min}^{\rm e}(a)\leq \lambda_{\max}^{\rm e}(a)
 \leq \lambda_{\max}^{\rm e}(b).$$

\end{rem}

We finish the section with three lemmas on perturbations that will be used
in Section \ref{section:Schur-Horn Theorem}.

\begin{lem}\label{lema:previo a traza infinita en los tau-compactos}
Let $x\in\cK(\m)^{+}$,  $z\in\cP(\m)$ infinite with $zx=0$ and $\e>0$.
Then there exists $x'\in \cK(\m)^+$ such that:
\begin{enumerate}
\item the support of $x'$ contains $z$;
\item $\|x'-x\|<\e$;
\item\label{lema:previo a traza infinita en los tau-compactos:4}
$\ds\lambda_t(x')=\lambda_t(x)+\e/(6+t)$, $t\in[0,\infty)$.
\end{enumerate}
\end{lem}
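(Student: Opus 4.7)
The plan is to construct $x'$ in three stages: (a) pick a scale parameter $s^*\geq 0$ below the ``tail'' of $x$; (b) produce a close-in-norm companion $\tilde x$ with the same upper spectral scale as $x$ that admits a complete flag whose limit contains $z$; (c) add to $\tilde x$ a commuting diagonal perturbation on that flag that implements the exact spectral lift $\e/(6+t)$.

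For (a), I fix $\delta\in(0,5\e/6)$, say $\delta=\e/2$. Since $x\in\cK(\m)^+$ satisfies $\lambda_t(x)\to 0$, I choose $s^*\geq 0$ with $\lambda_{s^*}(x)<\delta$; when $p_0:=\supp(x)$ has finite trace, $s^*=\tau(p_0)$ already suffices. Let $\{e(t)\}_{t\geq 0}$ be a complete flag for $x$ as in \eqref{eq:integral con la bandera completa}, and set $q^*=e(s^*)$; this projection commutes with $x$ and has trace $s^*$.

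For (b), I treat two sub-cases. If $\tau(p_0)\leq s^*$, then $q^*\supseteq p_0$ and the tail $(I-q^*)\,x\,(I-q^*)$ vanishes; I extend the flag beyond $q^*$ inside $z$ by putting $\tilde e(t)=e(t)$ for $t\leq s^*$ and $\tilde e(t)=q^*+z(t-s^*)$ for $t>s^*$, where $\{z(s)\}_{s\geq 0}$ is any increasing family of projections with $z(s)\leq z$ and $\tau(z(s))=s$ (available since $\tau(z)=\infty$); and I set $\tilde x:=x$. If instead $\tau(p_0)=\infty$, then $q^*\leq p_0$ properly and $(p_0-q^*)\,x\,(p_0-q^*)$ has operator norm $\lambda_{s^*}(x)<\delta$. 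Since $\m$ is a II$_\infty$-factor and both $p_0-q^*$ and $z$ are infinite projections, they are Murray--von Neumann equivalent, so I pick a partial isometry $v\in\m$ with $v^*v=p_0-q^*$ and $vv^*=z$, and define
\[
\tilde x:=q^*\,x\,q^*+v\,(p_0-q^*)\,x\,(p_0-q^*)\,v^*,\qquad \tilde e(t):=\begin{cases}e(t),& t\leq s^*,\\ q^*+v\,(e(t)-q^*)\,v^*,& t>s^*.\end{cases}
\]
In either sub-case $\{\tilde e(t)\}_{t\geq 0}$ is an increasing family of projections with $\tau(\tilde e(t))=t$ and $\tilde e(\infty)\supseteq z$; moreover $\tilde x=\int_0^\infty \lambda_t(x)\,d\tilde e(t)$, so $\lambda_t(\tilde x)=\lambda_t(x)$; and $\|\tilde x-x\|\leq\lambda_{s^*}(x)<\delta$ by the partial-isometry computation on orthogonal supports.

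For (c), I set $y:=\int_0^\infty \frac{\e}{6+t}\,d\tilde e(t)\in\cK(\m)^+$, so that $\|y\|=\e/6$ and $\lambda_t(y)=\e/(6+t)$. Since $y$ and $\tilde x$ are simultaneously diagonal on $\{\tilde e(t)\}$ they commute, and
\[
x':=\tilde x+y=\int_0^\infty\bigl(\lambda_t(x)+\e/(6+t)\bigr)\,d\tilde e(t).
\]
The integrand is strictly positive, non-increasing, and right-continuous, so $x'\in\cK(\m)^+$, with $\lambda_t(x')=\lambda_t(x)+\e/(6+t)$ giving (iii), support $\tilde e(\infty)\supseteq z$ giving (i), and $\|x'-x\|\leq\|y\|+\|\tilde x-x\|<\e/6+\delta<\e$ giving (ii). The principal obstacle is the case $\tau(p_0)=\infty$, where the flag of $x$ is rigidly confined to $p_0$; the partial-isometry swap that relocates the norm-small tail $(p_0-q^*)\,x\,(p_0-q^*)$ onto $z$ is precisely what allows $\tilde e(\infty)$ to reach $z$ while preserving the spectral scale exactly.
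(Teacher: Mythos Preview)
Your argument is correct, but it takes a slightly longer path than the paper's. The paper builds $x'$ in a single step: it keeps the head of $x$ on $e_x|_{[0,s_0]}$, \emph{discards} the small tail $x\,e_x(s_0)^\perp$ (norm $<\e/6$) entirely, and lays the remaining shifted spectral values $\lambda_{s_0+t}(x)+\e/(6+s_0+t)$ directly onto a complete flag $e_1$ over $z$. There is no intermediate operator $\tilde x$, no partial isometry, and no case distinction on $\tau(\supp x)$. Your two-stage construction instead first manufactures a $\tilde x$ with the \emph{same} spectral scale as $x$ but whose flag already reaches $z$ --- which, when $\tau(\supp x)=\infty$, forces you to transplant the tail onto $z$ via an equivalence of infinite projections --- and only then adds the commuting diagonal shift $y$. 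Both routes work; yours separates the ``relocate'' and ``perturb'' phases cleanly at the price of the extra machinery, while the paper's simultaneous construction is shorter. One small point to tighten: in your first sub-case you need $z(s)\nearrow z$ (i.e.\ a complete flag over $z$), not merely ``any increasing family with $z(s)\leq z$ and $\tau(z(s))=s$'', or else $\tilde e(\infty)$ need not contain all of $z$ and item~(i) could fail.
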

\begin{proof}
Since $x$ is $\tau$-compact, there exists $s_0>0$ such that $\lambda_{s_0}(x)<\e/6$. Let
$p_1=p^{x}(\lambda_{s_0}(x),\infty)$. The $\tau$-compactness of $x$ guarantees that
$\tau(p_1)<\infty$.

As $x$ is
$\tau$-compact and positive, there exists a complete flag $e_x(t)$ with
$x=\int_0^\infty\lambda_t(x)\,de_x(t)$. Note that $p_1=e_x(s_0)$.
Let $e_1(t)$ be a complete flag over $z$, and define
\[
x'=\int_0^{s_0}\left(\lambda_t(x)+\frac\e{6+t}\right)\,de_x(t)
+\int_{0}^\infty\left(\lambda_{t+s_0}(x)+\frac\e{6+t+s_0}\right)\,de_1(t)
\]
The second term above equals $x'p_1^\perp=x'z$ and its norm is less than $\e/3$; so
\begin{align*}
\|x-x'\|&\leq\left\|\int_0^{s_0}\frac\e{6+t}\,de_x(t)\right\|+\|xp_1^\perp\|+\|x'p_1^\perp\|
<\frac\e6+\frac\e6+\frac\e3< \e
\end{align*}
It is clear by construction (since $e_x(t)e_1(s)=0$ for all $t,s$) that
\[
\lambda_t(x')=\lambda_t(x)+\frac\e{6+t},\ \ t\in[0,\infty),
\]
and this implies $x'\in\cK(\m)$.
\end{proof}

\begin{lem}\label{lema: a y b aproximados}
Let $\cA\subset\m$ be a diffuse von Neumann subalgebra.
Let $a\in\cA^{\rm sa}$, $b\in\m^{\rm sa}$ with $a\prec b$, and fix $\e>0$.
Then there exist $a'\in\cA^{\rm sa}$, $b'\in\m^{\rm sa}$ such that
\begin{enumerate}
\item $\|a-a'\|<\e$, $\|b-b'\|<\e$;
\item  $a'\prec b'$;
\item $\overline{a'}$ , $\underline{a'}\,$, $\overline{b'}$ , $\underline{b'}$ (as defined in Remark \ref{definicion: bar b y underbar b}) have infinite support.
\end{enumerate}
\end{lem}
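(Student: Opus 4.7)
The strategy is to perturb $a$ and $b$ independently---$a$ inside $\cA$ and $b$ inside $\m$---using Lemma~\ref{lema:previo a traza infinita en los tau-compactos} on each of the four $\tau$-compact ``ends'' $\bar b,\underline b,\bar a,\underline a$, with the \emph{same} perturbation parameter for $a$ and $b$. This way the maps $t\mapsto U_t$ and $t\mapsto L_t$ will be shifted by identical amounts on both sides, so the majorization $a\prec b$ will be preserved in the form $a'\prec b'$.

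Set $\e'=\e/4$. For $b$, write $L=\lambda_{\max}^{\rm e}(b)$. Since $L\in\sess(b)$, the projection $p^b(L-\delta,L+\delta)$ has infinite trace for every $\delta\in(0,\e')$; using that $\m$ is a II$_\infty$ factor, I split it into two infinite subprojections, choosing one as $z_+\le p^b(L-\delta,L]$ while keeping the complementary infinite piece unmodified. Applying Lemma~\ref{lema:previo a traza infinita en los tau-compactos} to $(\bar b,z_+)$ with parameter $\e'$, I obtain $x_+\in\cK(\m)^+$ with $\supp x_+\supseteq z_+$ (hence of infinite trace), $\|x_+-\bar b\|<\e'$, and
\[
\lambda_t(x_+)=\lambda_t(\bar b)+\frac{\e'}{6+t},\qquad t\ge0.
\]
Define $b^{(1)}$ to coincide with $b$ off $\supp\bar b+z_+$ and to equal $L(\supp\bar b+z_+)+x_+$ on that piece. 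A direct spectral check---using that $x_+$ is $\tau$-compact and that the unmodified infinite piece of $p^b(L-\delta,L+\delta)$ still witnesses $L\in\sess(b^{(1)})$---gives $\lambda_{\max}^{\rm e}(b^{(1)})=L$ and $\overline{b^{(1)}}=x_+$, so $\overline{b^{(1)}}$ has infinite support, while $\|b^{(1)}-b\|<\delta+\e'/6<\e/3$. Repeating the procedure on the lower side produces $b'\in\m^{\rm sa}$ with $\|b'-b\|<\e$, with $\overline{b'}$ and $\underline{b'}$ both of infinite support, and (by Remark~\ref{definicion: bar b y underbar b})
\[
U_t(b')=U_t(b)+\int_0^t\frac{\e'}{6+s}\,ds,\qquad L_t(b')=L_t(b)-\int_0^t\frac{\e'}{6+s}\,ds.
\]
In the degenerate sub-case where $(\supp\bar b)^\perp$ is of finite trace (which forces $\overline{b}$ to have infinite support already and $\sess(b)=\{L\}$), Lemma~\ref{lema:previo a traza infinita en los tau-compactos} is replaced by the ``uniform shift'' $x_+:=\bar b+\int_0^\infty\tfrac{\e'}{6+t}\,de_{\bar b}(t)$ along a complete flag of $\bar b$, which satisfies the same spectral formula and keeps the support infinite.

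For $a$ I carry out the \emph{identical} construction inside $\cA$: the spectral projections of $a$ belong to $\cA$, and since $\cA$ is diffuse and abelian it supports both the splitting of any projection into sub-projections of arbitrary trace and the complete flags required by Lemma~\ref{lema:previo a traza infinita en los tau-compactos}. Using the same parameter $\e'=\e/4$ this yields $a'\in\cA^{\rm sa}$ with $\|a'-a\|<\e$, with $\overline{a'}$ and $\underline{a'}$ of infinite support, and the identical shifts
\[
U_t(a')=U_t(a)+\int_0^t\frac{\e'}{6+s}\,ds,\qquad L_t(a')=L_t(a)-\int_0^t\frac{\e'}{6+s}\,ds.
\]
Adding the same nonnegative function of $t$ to both sides of $U_t(a)\le U_t(b)$, and subtracting the same function from both sides of $L_t(a)\ge L_t(b)$, gives $U_t(a')\le U_t(b')$ and $L_t(a')\ge L_t(b')$ for every $t\in\RR^+$, i.e.\ $a'\prec b'$.

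The main obstacle is the spectral bookkeeping needed to guarantee that $\lambda_{\max}^{\rm e}$ and $\lambda_{\min}^{\rm e}$ are preserved by the modification (so that the explicit shift identities above hold on the nose), together with the handling of the degenerate sub-cases where one of $\bar a,\underline a,\bar b,\underline b$ already has infinite support but with finite orthogonal complement. Once these are in place, the matching perturbation parameter for $a$ and $b$ propagates the majorization automatically.
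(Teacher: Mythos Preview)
Your strategy is the paper's: hit each of the four $\tau$-compact ends with Lemma~\ref{lema:previo a traza infinita en los tau-compactos} using the \emph{same} parameter, so that $\lambda_t$ and $\mu_t$ (hence $U_t$ and $L_t$) shift by identical amounts for $a$ and for $b$, and the majorization carries over. The paper builds $b'$ exactly this way and obtains $\lambda_t(b')=\lambda_t(b)+\e/(6+t)$, $\mu_t(b')=\mu_t(b)-\e/(6+t)$, then does the same for $a$ inside $\cA$.

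There is, however, a gap in your case split. You need an infinite $z_+\le p^b(L-\delta,L]$ (so that $z_+\bar b=0$), and your only fallback is the sub-case ``$(\supp\bar b)^\perp$ has finite trace''. But one can have $(\supp\bar b)^\perp$ infinite while $p^b(L-\delta,L]$ is finite for every small $\delta$: take $b$ with an infinite-trace eigenspace at $-1$, a finite-trace eigenspace at $1$, and eigenvalues $1+1/n$ each of finite trace summing to infinity. Then $L=\lambda_{\max}^{\rm e}(b)=1$, the projection $(\supp\bar b)^\perp=p^b(-\infty,1]$ is infinite, yet $p^b(1-\delta,1]$ is just the finite eigenspace at $1$. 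Neither branch of your argument covers this.

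The repair is easy. The correct dichotomy is on $\supp\bar b$: if $\supp\bar b$ is infinite, use your uniform shift along a complete flag of $\bar b$ (this already gives $\lambda_t(x_+)=\lambda_t(\bar b)+\e'/(6+t)$ with infinite support); if $\supp\bar b$ is finite, then $p^b(L,L+\delta)\le\supp\bar b$ is finite, so $p^b(L-\delta,L]=p^b(L-\delta,L+\delta)-p^b(L,L+\delta)$ is automatically infinite and your main construction goes through. The paper avoids the split altogether by applying Lemma~\ref{lema:previo a traza infinita en los tau-compactos} not to $\bar b$ but to the truncation $\bar b\,s_1$ with $s_1=p^b[L+\e/8,\infty)$, and taking $z_1$ inside the two-sided window $p^b(L-\e/8,L+\e/8)$, which is always infinite.
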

\begin{proof}
We first consider a partition of the identity
\[
s_1=p^b[\lambda_{\max}^{\rm e}(b)+\frac\e8, \infty) \, , \ \
s_2=p^b(\lambda_{\min}^{\rm e}(b)-\frac\e8,\lambda_{\max}^{\rm e}(b)+\frac\e8)\, , \ \
s_3=p^b(-\infty,\lambda_{\min}^{\rm e}(b)-\frac\e8].
\]
The projection $s_2$ is infinite, while the others may or may not be infinite.
We consider a decomposition $s_2=z_1+z_2+z_3$ into three mutually orthogonal infinite projections,
such that
\[
z_1\leq p^b(\lambda_{\max}^{\rm e}(b)-\frac\e{8}, \lambda_{\max}^{\rm e}(b)+\frac\e{8}),\ \
z_3\leq p^b(\lambda_{\min}^{\rm e}(b)-\frac\e{8}, \lambda_{\min}^{\rm e}(b)+\frac\e{8}).
\]
Let $\underline a,\,\bar a \in \cK(\cA)^+$ and $\underline b,\,\bar b\in \cK(\m)^+$ as in
\eqref{eq: bar b y underbar b: def}.
Apply Lemma \ref{lema:previo a traza infinita en los tau-compactos} to $\bar b s_1$ with the projection
$z_1$ and to $\underline b\, s_3$ with $z_3$, to obtain $(\bar{b})'$, $(\underline {b})'\in\cK(\m)^+$,
both with infinite support and such that $\|(\bar b)'-\bar b\,s_1\|<\e/4$,
$\|(\underline b)'-\underline b\,s_3\|<\e/4$. Define
\[
b'=((\bar b)' + \lambda^{\rm e}_{\max}(b)(s_1+z_1))+(s_2-z_1-z_3)b-((\underline b)'-\lambda^{\rm e}_{\min}(b)(s_3+z_3)).
\]

As $b=(\bar b\,s_1+\lambda_{\max}^{\rm e}(b)\,s_1)+bs_2
-(\underline b\,s_3 - \lambda_{\min}^{\rm e}(b)s_3)$, we get
\begin{align*}
\|b'-b\|&\leq\|(\bar b)'-\bar b\,s_1\|+\|\lambda^{\rm e}_{\max}(b)\,z_1-b\,z_1\|
+\|\lambda^{\rm e}_{\min}(b)\,z_3-b\,z_3\|+\|(\underline b)'-\underline b\,s_3\|\\
&<\frac\e4+\frac\e4+\frac\e4+\frac\e4=\e.
\end{align*}
Note that $\lambda_{\max}^{\rm e}(b')=\lambda_{\max}^{\rm e}(b)$;
then  $\overline{b\,'}=(\bar b)'$ , $\underline{b\,'}=(\underline b)'$ have infinite support,
\begin{align}\label{eq:los lambda y mu perturbados 1}
\lambda_t(b')&=\lambda_t(\overline{b'})+\lambda_{\max}^{\rm e}(b')
=\lambda_t((\overline b)')+\lambda_{\max}^{\rm e}(b)\\
\nonumber &=\lambda_t(\overline b)+\frac\e{6+t}+\lambda_{\max}^{\rm e}(b)
=\lambda_t(b)+\frac\e{6+t}
\end{align}
and similarly $\mu_t(b')=\mu_t(b)-\frac\e{6+t}$.

Proceeding with $a$ in the same way we did for $b$, we
obtain $a'\in\cA^{\rm sa}$ with $\|a-a'\|<\e$, with $\overline{a'}$ and $\underline {a'}$ having
infinite support, and such that
\begin{equation}\label{eq:los lambda y mu perturbados 2}
\lambda_t(a')=\lambda_t(a)+\frac\e{6+t},\ \ \mu_t(a')=\mu_t(a)-\frac\e{6+t},\ \ t\in[0,\infty).
\end{equation}
From \eqref{eq:los lambda y mu perturbados 1}, \eqref{eq:los lambda y mu perturbados 2},
and the fact that $a\prec b$, we deduce that $a'\prec b'$.
\end{proof}

Let $\cN$ be a semifinite diffuse von Neumann algebra with fns trace $\tau$.
We consider the set $L^1(\cN)\cap\cN$, which consists of those $x\in\cN$ with $\|x\|_1<\infty$.
The elements in $L^1(\cN)\cap\cN$ are necessarily compact, since $\int_0^\infty\lambda_t(|x|)\,dt<\infty$ forces
$\nu_t(x)=\lambda_t(|x|)\xrightarrow[t\to\infty]{}0$.

\begin{lem}\label{lema:en los tau-compactos siempre se puede traza infinita}
Let $\cN$ be a semifinite diffuse von Neumann algebra with fns trace $\tau$, and let
$x\in L^1(\cN)^{\rm sa}$, $\e>0$. Then there exists $x'\in L^1(\cN)^{\rm sa}$ such that
\begin{enumerate}
\item\label{lema:en los tau-compactos siempre se puede traza infinita:1} $\|x'-x\|_1<\varepsilon$;
\item\label{lema:en los tau-compactos siempre se puede traza infinita:2} $\lambda_t(x')=\lambda_t(x)+\e/(10+4t^2)$;
\item\label{lema:en los tau-compactos siempre se puede traza infinita:3} $\mu_t(x')=\mu_t(x)-\e/(10+4t^2)$;
\item $\tau(p^{x'}(0,\infty))=\infty$, $\tau(p^{x'}(-\infty,0))=\infty$;
\item $p^{x'}(-\infty,0)+p^{x'}(0,\infty)=I$.
\end{enumerate}
\end{lem}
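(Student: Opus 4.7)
The plan is to mimic the construction of Lemma 4.8, but now with the integrable weight $\varphi(t)=\varepsilon/(10+4t^2)$ in place of the weight $\varepsilon/(6+t)$ used there, so that the perturbation is controlled in $\|\cdot\|_1$ rather than in $\|\cdot\|_\infty$. The specific constants are chosen so that
\[
\int_0^\infty \varphi(t)\,dt \;=\; \frac{\varepsilon\,\pi}{4\sqrt{10}} \;<\; \frac{\varepsilon}{2},
\]
which will give the strict $\|\cdot\|_1$ bound in (i). Throughout I tacitly assume $\tau(\cN)=\infty$, as the conclusion (iv) would be vacuous otherwise.

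First I would split $x=x^+-x^-$ into its positive and negative parts. Since $x\in L^1(\cN)$, the projections $p^+=p^x(0,\infty)$ and $p^-=p^x(-\infty,0)$ are orthogonal and of finite trace. Consequently $q=I-p^+-p^-=p^x(\{0\})$ has infinite trace, and by semifiniteness and diffuseness of $\cN$ it decomposes as $q=z_1+z_2$ with $z_1,z_2$ mutually orthogonal infinite projections. Next I choose a complete flag $\{e^+(t)\}_{t\geq 0}$ with $\sup_t e^+(t)=p^++z_1$, $\tau(e^+(t))=t$, and such that it diagonalizes $x^+$ on $[0,\tau(p^+)]$, i.e.\ $x^+=\int_0^\infty \lambda_t(x^+)\,de^+(t)$. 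I then define
\[
y^+ \;=\; \int_0^\infty \bigl(\lambda_t(x^+)+\varphi(t)\bigr)\,de^+(t),
\]
and similarly define $y^-$ from a complete flag on $p^-+z_2$ diagonalizing $x^-$. Finally set $x'=y^+-y^-$.

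For the verification: both $t\mapsto \lambda_t(x^\pm)$ and $\varphi$ are non-increasing and right-continuous, so $y^\pm\in\cN^+$ has support exactly $p^\pm+z_{1/2}$ (which is infinite) and its spectral scale is $\lambda_t(y^\pm)=\lambda_t(x^\pm)+\varphi(t)$. Since $y^+$ and $y^-$ have orthogonal supports, $p^{x'}(0,\infty)=p^++z_1$ and $p^{x'}(-\infty,0)=p^-+z_2$ are both of infinite trace and add to $I$; this gives (iv) and (v). Using the identities $\lambda_t(x)=\lambda_t(x^+)$ and $\mu_t(x)=-\lambda_t(x^-)$, which hold because the supports of $x^\pm$ have finite trace and $\ker x$ is infinite, items (ii) and (iii) are immediate. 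For (i), $y^+-x^+$ is a positive operator with spectral scale $\varphi$, so $\|y^+-x^+\|_1=\int_0^\infty \varphi(t)\,dt=\varepsilon\pi/(4\sqrt{10})$, and analogously for $y^--x^-$; since these two positive operators have orthogonal supports, $\|x'-x\|_1 = \|y^+-x^+\|_1+\|y^--x^-\|_1 = \varepsilon\pi/(2\sqrt{10})<\varepsilon$.

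The only nonroutine step is the identification of the spectral scale of $y^+$ as $\lambda_t(x^+)+\varphi(t)$. This works because $x^+$ and the perturbation $\int_0^\infty\varphi(t)\,de^+(t)$ are simultaneously diagonalized by the flag $e^+$, and both integrands are non-increasing, so the sum is again ``diagonalized in non-increasing order'' by the same flag and hence its spectral scale equals the sum of the integrands. Once this structural observation is in place, all the required properties fall out from the construction, and no further analytic input is needed beyond Lemma 2.2 in \cite{FackKosaki1986} to compute $\|\cdot\|_1$ of a positive operator via its spectral scale.
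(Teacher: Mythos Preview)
Your proof contains a genuine error: the claim that $p^+=p^x(0,\infty)$ and $p^-=p^x(-\infty,0)$ have finite trace is false in general --- membership in $L^1(\cN)\cap\cN$ only forces $\tau$-compactness, not finite-trace support. For instance, if $e$ is a complete flag on $\cN$ and $x=\int_0^\infty (1+t)^{-2}\,de(t)$, then $x\in L^1(\cN)^+$ but $p^+=I$. Once this claim fails, so does the splitting of $q=p^x(\{0\})$ into two infinite pieces: take for example $\tau(p^+)<\infty$, $\tau(p^-)=\infty$ and $q=0$; then your flag $e^+$ is confined to the finite projection $p^+$, the operator $y^+$ cannot have strictly positive spectral scale for all $t\geq0$, and both (ii) and (iv) fail. (Incidentally, the identities $\lambda_t(x)=\lambda_t(x^+)$ and $\mu_t(x)=-\lambda_t(x^-)$ do hold, but because $0\in\sess(x)$, not for the reason you give.)

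The paper's sketch avoids this by following Lemma~\ref{lema:previo a traza infinita en los tau-compactos} more literally: one uses the $L^1$-property to pick a cutoff $s_0$ with $\int_{s_0}^\infty\lambda_t(x^\pm)\,dt$ small, keeps the original flags only on $[0,s_0]$, and then redirects the tails onto freshly chosen infinite projections carved out of the (necessarily infinite) complement of $e^+(s_0)+e^-(s_0)$. This truncation-and-redirection is precisely what lets one manufacture two complementary infinite supports regardless of the sizes of $p^\pm$ and $q$. Your argument does work cleanly in the special case where $p^\pm$ are both finite, and your $\|\cdot\|_1$ estimate is then correct and tighter than what the redirection approach yields; but for the general statement you need the extra step.
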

\begin{proof}
Since $x$ is $\tau$-compact, its essential spectrum contains zero. Then $\lambda_t(x)\geq0$,
$\mu_t(x)\leq0$ for all $t$. With that in mind, the proof runs as the proof of Lemma \ref{lema:previo a traza infinita en los tau-compactos},
using the $L^1$ property instead of compactness to choose $p_1$ and considering the positive
and negative parts of $x$ separately.
\end{proof}

\section{Schur-Horn theorems in II$_\infty$-factors}\label{section:Schur-Horn Theorem}

In this section we prove versions of the Schur-Horn theorem
in the $\sigma$-finite II$_\infty$-factor $(\m,\tau)$ (Theorems \ref{teo:SH} and \ref{teo:SH contractivo}),
in the spirit of Neumann's work \cite{Neumann1999}. We also consider
versions of these results for $\tau$-integrable operators (Theorems \ref{teo: sh en tipo traza} and \ref{teo: sh en tipo traza, contractivo}).

We begin with the following result, which comprises the main technical part of the proof of
Theorem \ref{teo:SH}
(by allowing us to reduce the argument to a discrete case).
Recall that $V(\ve,\delta)$ denotes the canonical basis of neighborhoods of 0 in the
measure topology, indexed by $\ve,\,\delta>0$.

\begin{pro}\label{pro:aproximacion}
Let $\cA\subset \m$ be a diffuse von Neumann subalgebra. Let
$a\in \cA^{\rm sa}$, $b\in \msa$ be such that $a\prec b$ and fix $m\in \NN$. Then
there exist $\{p_n\}_{n\geq 1}\subset \cP ( \cA),\, \{q_n\}_{n\geq 1}\subset \cP (\m)$ such that
\begin{enumerate}
\item\label{pro:aproximacion1} $p_i\,p_j=q_i\,q_j=0$ for $i\neq j$;
\item\label{pro:aproximacion2} $\tau(p_n)=\tau(q_n)=\tau(p_1)$ for all $n\in \NN$;
\item\label{pro:aproximacion3} $\tau(1-\sum_{n\geq 1}p_n)=\tau(1-\sum_{n\geq 1}q_n)< \frac{1}{m}$;
\item\label{pro:aproximacion4} there exist $\falpha,\gbeta\in\ell^\infty_\RR(\NN)$ such that:
\begin{enumerate}
\item\label{pro:aproximacion4a} $\falpha\prec\gbeta$ ;
\item\label{pro:aproximacion4b}   \[ (a-\sum_{n\geq 1} \falpha(n)\, p_n ),\  (b-\sum_{n\geq 1} \gbeta(n)\, q_n )\in V(\frac{1}{m},\frac{1}{m} ).\]
\end{enumerate}
\end{enumerate}
\end{pro}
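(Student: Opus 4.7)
The plan is to reduce the approximation to a discrete $\ell^\infty$ majorization by discretizing both operators simultaneously along their spectral scales. I would first apply Lemma \ref{lema: a y b aproximados} with a small $\varepsilon$ to replace $(a,b)$ by $(a',b')$ with $\|a-a'\|,\|b-b'\|<1/(3m)$, preserving $a'\prec b'$, and with all four tails $\overline{a'},\underline{a'},\overline{b'},\underline{b'}$ having infinite support. This extra property is exactly what is needed to take infinitely many equal-trace chunks both far above and far below the essential spectrum.

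Next, fix small parameters $c,\eta>0$ and stratify $a'$ into three zones. Using diffuseness of $\cA$ and the fact that $\overline{a'}\in\cA^+$ is $\tau$-compact with infinite support, take a complete flag $\{e_a(t)\}_{t\ge0}\subset\cA$ with $\overline{a'}=\int_0^\infty\lambda_t(\overline{a'})\,de_a(t)$ and define top chunks $p^T_n:=e_a(nc)-e_a((n-1)c)$; symmetrically obtain bottom chunks $p^B_n$ from $\underline{a'}$. Cover $[\lambda_{\min}^{\rm e}(a'),\lambda_{\max}^{\rm e}(a')]$ by intervals $\{J_j\}$ of length at most $\eta$; each $r^a_j:=p^{a'}(J_j)\in\cA$ is partitioned (again by diffuseness) into trace-$c$ pieces (exactly, when $\tau(r^a_j)=\infty$; with a remainder of trace $<c$, when finite). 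The analogous construction in $\m$ yields the $q_n$'s.

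Assign the value $\alpha^T_n:=c^{-1}\int_{(n-1)c}^{nc}\lambda_s(a')\,ds$ to $p^T_n$, the value $\alpha^B_n:=c^{-1}\int_{(n-1)c}^{nc}\mu_s(a')\,ds$ to $p^B_n$, and the midpoint of $J_j$ to each trace-$c$ piece of $r^a_j$; enumerate these values to get $\alpha\in\ell^\infty_\RR(\NN)$, and define $\beta$ analogously from $b'$. The crucial observation is that $\alpha^T_n\ge\lambda_{\max}^{\rm e}(a')$, every middle value lies in $[\lambda_{\min}^{\rm e}(a'),\lambda_{\max}^{\rm e}(a')]$, and $\alpha^B_n\le\lambda_{\min}^{\rm e}(a')$; since there are infinitely many top chunks (decreasing in $n$) and infinitely many bottom chunks (increasing in $n$), the $k$ largest values of $\alpha$ are exactly $\alpha^T_1,\ldots,\alpha^T_k$, so $U_k(\alpha)=U_{kc}(a')/c$ and symmetrically $L_k(\alpha)=L_{kc}(a')/c$. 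The same identities hold for $\beta$, so $\alpha\prec\beta$ follows directly from $a'\prec b'$.

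For the $V(1/m,1/m)$ containment, the error on each top chunk is bounded by $\delta_n:=\lambda_{(n-1)c}(a')-\lambda_{nc}(a')$, which tends to $0$ with $\sum_n\delta_n=\|\overline{a'}\|$; hence at most $3m\|\overline{a'}\|$ top chunks have error $\ge 1/(3m)$, and their total trace $\le 3m\|\overline{a'}\|\,c$ is made smaller than $1/(4m)$ by shrinking $c$. Middle chunks have error $\le\eta/2$, and middle remainders have total trace at most (number of intervals)$\times c$; for $\eta$ and $c$ chosen small enough, together with $\|a-a'\|<1/(3m)$ and the symmetric estimates for bottom chunks and for $b$, all bad pieces sit inside a projection of trace $<1/m$ on whose complement the discretized operator differs from $a$ (respectively $b$) in norm by less than $1/m$. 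The main technical hurdle is condition (\ref{pro:aproximacion3}): exact equality of the two co-traces under the rigid constraint $\tau(p_n)=\tau(q_n)=c$. This is handled by exploiting the freedom to include or omit individual trace-$c$ subprojections of the infinite-trace essential-spectrum pieces (each adjustment changing a co-trace by $c$), and then picking $c$ small enough so that a common target value $t^*<1/m$ is simultaneously attainable for both $\tau(1-\sum p_n)$ and $\tau(1-\sum q_n)$.
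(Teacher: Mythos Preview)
Your overall strategy coincides with the paper's: perturb via Lemma~\ref{lema: a y b aproximados}, then discretize along complete flags for the tails and a spectral partition of the essential range. Your choice to take \emph{infinitely} many top and bottom chunks is in fact a simplification over the paper. Because $\alpha^T_n\ge\lambda_{\max}^{\rm e}(a')\ge$ every middle value $\ge\lambda_{\min}^{\rm e}(a')\ge\alpha^B_n$ for all $n$, you get the clean identities $U_k(\alpha)=U_{kc}(a')/c$ and $L_k(\alpha)=L_{kc}(a')/c$ (and likewise for $\beta$), so $\alpha\prec\beta$ drops out of $a'\prec b'$ with no case split. The paper instead cuts the flags at a large finite $t$, divides into $N$ pieces, and must treat $k\le N$ and $k>N$ separately, using that infinitely many copies of $\gamma_L=\lambda_{\max}^{\rm e}(b)$ occur in $\beta$.

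There is, however, a genuine gap in your handling of condition~(\ref{pro:aproximacion3}). After discretizing, the two co-traces are $R_a=\sum_{j:\tau(r^a_j)<\infty}\bigl(\tau(r^a_j)-c\lfloor\tau(r^a_j)/c\rfloor\bigr)$ and the analogous $R_b$. Your proposed adjustment---omitting individual trace-$c$ pieces---changes each co-trace only by integer multiples of $c$, so you would need $R_a\equiv R_b\pmod c$; there is no reason this holds, and ``picking $c$ small enough'' does not help since $R_a(c)-R_b(c)$ is not continuous in $c$ (it jumps whenever some $\tau(r^a_j)/c$ or $\tau(r^b_j)/c$ crosses an integer). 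The fix is the one the paper uses: make the adjustment \emph{continuous} rather than discrete. Before partitioning the essential-spectrum pieces, carve off from one of the infinite-trace projections (say $r^a_{j_0}$ with $\tau(r^a_{j_0})=\infty$) a subprojection of trace exactly $|R_b-R_a|$ and leave it uncovered; what remains is still infinite and can be partitioned into trace-$c$ pieces as before. This forces the two co-traces to agree, costs at most an extra $Lc$ in the bad-set trace (harmless), and since the discarded entry is a middle value it does not disturb your $U_k,L_k$ identities, so $\alpha\prec\beta$ survives.
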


\begin{proof}
By Lemma \ref{lema: a y b aproximados} there exist $a'\in\cA^{\rm sa}$, $b'\in\m^{\rm sa}$ with $\|a-a'\|<1/2m$,
$\|b-b'\|<1/2m$, $a'\prec b'$, and such that $\bar a$, $\underline a$, $\bar b$, $\underline b$ (as defined in Remark \ref{definicion: bar b y underbar b}) have infinite support.
 So, at the cost of replacing $1/m$ with $2/m$ in \eqref{pro:aproximacion4b} above, we can assume without loss
of generality that
 $\tau(r_1)=\tau(s_1)=\tau(r_3)=\tau(s_3)=\infty$, where $r_1,s_1,r_3,s_3\in \cP(\cM)$ are as in the proof of Lemma \ref{lema: a y b aproximados}.

Since $\cA$ is diffuse, there exist complete flags
$\{e_{\bar a}(t)\}_{t\in[0,\infty)}$, $\{e_{\underline a}(t)\}_{t\in[0,\infty)}$ in $\cA$
over $r_1$ and $r_3$ respectively
such that $\tau(e_{\bar a}(t))=\tau(e_{\underline a}(t))=t$  for $t\geq 0$ and
\[
\bar a=\int_0 ^\infty \lambda_s({\bar a})\ de_{\bar a}(s)\, , \
\underline a= \int_0 ^\infty \lambda_s({\underline a})\ de_{\underline a}(s).
\]
Similarly, there exist complete flags $\{e_{\bar b}(t)\}_{t\in[0,\infty)}$,
$\{e_{\underline b}(t)\}_{t\in[0,\infty)}$ over $s_1$ and $s_3$ respectively
such that $\tau(e_{\bar b}(t))=\tau(e_{\underline b}(t))=t$  for $t\geq 0$ and
\[
\bar b=\int_0 ^\infty \lambda_s({\bar b})\ de_{\bar b}(s)\, , \
\underline b= \int_0 ^\infty \lambda_s({\underline b})\ de_{\underline b}(s).
\]
Let $q_t=I-(e_{\bar b}(t)+e_{\underline b}(t))$,
$p_t=I-(e_{\bar a}(t)+e_{\underline a}(t))$. Then $\{q_t\},\{p_t\}$ are decreasing nets of
projections that converge strongly to $s_2$, $r_2$ respectively.
For the rest of the proof,
we will fix $t> 0$
big enough so that the following three properties hold (all guaranteed by the fact that
$\lambda_t(x)\to0$ as $t\to\infty$ if $x\in\cK(\m)$):
\begin{equation}\label{eq:condiciones para el t_1}
\left(\lambda_{\min}^{\rm e}(b) - \frac{1}{m}\right)q_t\,\leq\,b\,q_t \, \leq \left(\lambda_{\max}^{\rm e}(b)+ \frac{1}{m}\right)q_t\, .
\end{equation}
\begin{equation}\label{eq:condiciones para el t_2}
\left(\lambda_{\min}^{\rm e}(b) - \frac{1}{m}\right)p_t\,\leq\,a\,p_t \, \leq \left(\lambda_{\max}^{\rm e}(b)+ \frac{1}{m}\right)p_t\, .
\end{equation}
\begin{equation}\label{eq:condiciones para el t_3}
\max\{\lambda_t({\bar a}), \lambda_t({\bar b}),
\lambda_t({\underline a}), \lambda_t({\underline b})\}\,<\,\frac1m.
\end{equation}

Now apply \cite[Lemma 3.2.]{ArgeramiMassey2007} and Corollary
\ref{lem:topologia de la medida en un II_1} to $a\,e_{\bar
a}(t)$ in the II$_1$ factor $e_{\bar a}(t)\m e_{\bar a}(t)$ and
to $a\,e_{\underline a}(t)$ in the II$_1$-factor
$e_{\underline a}(t)\m e_{\underline a}(t)$. This way
we get $N\in\NN$ with $N\geq t\cdot 3\,m\cdot
(2\,\|b\|\,m+3)$,  partitions $\{p_j\}_{j=1}^N$ and
$\{p'_j\}_{j=1}^N$ of $e_{\bar a}(t)$ and $e_{\underline a}(t)$ respectively given by
\[
p_j=e_{\bar a}\left(\frac{j \, t}{N}\right )-e_{\bar a}\left(\frac{(j-1) \, t}{N}\right)\, , \
p'_j=e_{\underline a}\left(\frac{j \, t}{N}\right )-e_{\underline a}\left(\frac{(j-1) \, t}{N}\right)
\, , \ 1\leq j\leq N,
\]
and coefficients $\alpha'_1\geq\alpha'_2\geq\cdots\geq\alpha'_N$,
 $\alpha''_1\geq\alpha''_2\geq\cdots\geq\alpha''_N$
 given by
\[
\alpha_j'=\frac{N}{t}\int_{(j-1)t/N}^{jt/N}\lambda_s({ae_{\bar a}(t))}\,ds
=\frac{N}{t}\tau(ap_j),\ \ \alpha_j''=\frac{N}{t}\tau(ap_j'),
\]
such that
\begin{equation}\label{eq:discretos en un entorno1}
(a \, e_{\bar a} (t)- \sum_{j=1}^N \alpha'_j\, p_j)\, , \
(a \, e_{\underline a} (t)- \sum_{j=1}^N \alpha''_j\, p'_j)\, \in V(\frac{1}{m},\frac{1}{2\,m})\,
\end{equation}
(recall that $\|x\|_{(1)}\leq\|x\|_1$ and that
if $\|x\|_{(1)}<1/4m^2$, then $x\in V(1/2m,1/2m)$; see the proof
of Proposition \ref{proposition:la top de la medida esta dada por una norma en acotados}).
Similarly, we obtain for $b$
 partitions $\{q_j\}_{j=1}^N$ and $\{q'_j\}_{j=1}^N$ of $e_{\bar b}(t)$ and
 $e_{\underline b}(t)$ respectively such that
 \[
 q_j=e_{\bar b}\left(\frac{j \, t}{N}\right)-e_{\bar b}\left(\frac{(j-1) \, t}{N}\right)\, , \
 q'_j=e_{\underline b}\left(\frac{j \, t}{N}\right)-e_{\underline b}\left(\frac{(j-1) \, t}{N}\right)
 \, , \ 1\leq j\leq N,
 \]
 and coefficients $\beta'_1\geq\beta'_2\geq\cdots\geq\beta'_N$,
 $\beta''_1\geq\beta''_2\geq\cdots\geq\beta''_N$ given by
\[
\beta_j'=\frac{N}{t}\tau(bq_j),
\ \ \beta_j''=\frac{N}{t}\tau(bq_j')
\]
 with
\begin{equation}\label{eq:discretos en un entorno2}
(b \, e_{\bar b} (t)- \sum_{j=1}^N \beta'_j\, q_j)\, , \
(b \, e_{\underline b} (t)- \sum_{j=1}^N \beta''_j\, q'_j)\, \in V(\frac{1}{m},\frac{1}{2\,m})\, .
\end{equation}
Consider now a partition $\{I_j\}_{j=1}^{L}$ of
$[\lambda_{\min}^{\rm e}(b)-\frac{1}{m},\lambda_{\max}^{\rm e}(b)+\frac{1}{m}]$ into $L$ consecutive disjoint sub-intervals
with $2\leq L\leq 2\,\|b\|\,m+3$,
with $I_1=[\lambda_{\min}^{\rm e}(b)-\frac{1}{m},\lambda_{\min}^{\rm e}(b))$,
$I_L=(\lambda_{\max}^{\rm e}(b),\lambda_{\max}^{\rm e}(b)+\frac{1}{m}]$, and
such that the length of each $I_j$ is no greater than $\frac{1}{m}$.
Define
\[
a_e=p_t\,a,\ \ \
b_e=q_t\, b.
\]
Let $\gamma_1=\lambda_{\min}^{\rm e}(b)$, $\gamma_L=\lambda_{\max}^{\rm e}(b)$, and choose
$\gamma_j\in I_j$ for
$2\leq j\leq L-1$. The choice of the $\gamma_j$, together with  \eqref{eq:condiciones para el t_1} and
\eqref{eq:condiciones para el t_2}, imply that
\begin{equation}\label{eq:aproximacion de discretos para el esencial}
\|\,a_e- \sum_{j=1}^L \gamma_j \, p^{a_e}(I_j)\,\|< \frac{1}{m}\, , \ \
\|\,b_e- \sum_{j=1}^L \gamma_j \, p^{b_e}(I_j)\,\|< \frac{1}{m}\, .
\end{equation}
For $j\in\{1,\ldots,L\}$ let
\[
t^a_j=\left\{\begin{array}{cl}
\lfloor \,\frac{\tau(p^{a_e}(I_j))\, N}{t} \,\rfloor  &\text{ if } \tau(p^{a_e}(I_j))<\infty\\ \ \\
\infty &\text{ if } \tau(p^{a_e}(I_j))=\infty\, ,
\end{array}\right.
\]
where $\lfloor x\rfloor $ denotes the integer part of $x\in \RR$.
We construct $\{t^b_j\}_{j=1}^L$ in the same way.
For each $j$, if $t^a_j=\infty$ we consider a partition
$\{p^{(j)}_i\}_{i\in\NN}\subset \cP(\cA)$ of $p^{a_e}(I_j)$ with $\tau(p^{(j)}_i)=\frac{t}{N}$ for
all $i\in \NN$;
otherwise, if $t^a_j<\infty$,
we consider a partition
$\{p^{(j)}_i\}_{i=1}^{t^a_j+1}\subset \cP(\cA)$ with $\tau(p^{(j)}_i)=\frac{t}{N}$
for $1\leq i\leq t^a_j$, and $\tau(p^{(j)}_{t^a_j+1})<\frac{t}{N}$.

Analogously, we consider partitions $\{q^{(j)}_i\}_{i}\subset \cP(\m)$ of $p^{b_e}(I_j)$ for $1\leq j\leq L$.
Since $\overline b$ and $\underline b$ have infinite support,
 \begin{equation}\label{eq:cotas entre los coeficientes}
 t^b_1=t^b_L=\infty\, , \ \ \lambda_{\min}^{\rm e}(b) \leq\min_{1\leq j\leq L}\gamma_j\leq \max_{1\leq j\leq L}\gamma_j\leq \lambda_{\max}^{\rm e}(b)
 \end{equation}  and  there exists $i_0\in\{1,\ldots,L\}$ with $t^a_{i_0}=\infty$.
 As $L\leq 2\|b\|m+3$,
$N\geq t\cdot 3\,m\cdot (2\,\|b\|\,m+3)$,
\begin{equation}\label{eq:colas}
\sum_{j:\ t^a_j<\infty} \tau(p^{(j)}_{t^a_j +1})\leq\sum_{i=1}^L \frac{t}{N}\leq \frac{1}{3\,m}\, , \  \sum_{j:\ t^b_j<\infty} \tau(q^{(j)}_{t^b_j +1})\leq \frac{1}{3\,m}.
\end{equation}
We can assume that the two projections
$\sum_{j:\ t^a_j<\infty} p^{(j)}_{t^a_j +1}$,
$\sum_{j:\ t^b_j<\infty} q^{(j)}_{t^b_j +1}$
have equal trace; indeed we can
take the necessary mass (which will be certainly less than $1/2m$)
from one of the projections
$p^{a_e}(I_{i_0})$, $p^{b_e}(I_{L})$ respectively (since each of them is an infinite projection)
before considering the partitions  of these projections (this, at the cost of replacing the
``$\|\cdot\|<1/m$'' in \eqref{eq:aproximacion de discretos para el esencial} by ``$\in V(1/m,1/2m)$'').
From \eqref{eq:aproximacion de discretos para el esencial} and
\eqref{eq:colas},
\begin{equation}\label{en un entorno3}
(a_e-\sum_{j=1}^L\gamma_j\,\sum_{i=1}^{t^a_j} p^{(j)}_i)\, ,\ \
(b_e-\sum_{j=1}^L\gamma_j\,\sum_{i=1}^{t^b_j} q^{(j)}_i) \, \in V(\frac{1}{m},\frac{1}{m})\,.
\end{equation}
Let $\{(\alpha_i,p_i)\}_{i\geq 1}$ be an enumeration of the countable set
\begin{align*}
\{(\alpha'_j,p_j):\, 1\leq j\leq N \}&\cup\, \{(\alpha''_j,p'_j):\, 1\leq j\leq N \}\\
&\cup\,\{(\gamma_j,p^{(j)}_i) :\, 1\leq j\leq L\, ,\, 1\leq i\leq t^a_j\}
\end{align*}
and let
$\{(\beta_i,q_i)\}_{i\geq 1}$ be an enumeration of the countable set
\begin{align*}
\{(\beta'_j,q_j):\, 1\leq j\leq N \}&\cup\, \{(\beta''_j,q'_j):\, 1\leq j\leq N \}\\
&\cup\,\{(\gamma_j,q^{(j)}_i) :\, 1\leq j\leq L\, ,\, 1\leq i\leq t^b_j\}.
\end{align*} By construction, $\{p_n\}_{n\in \NN}\subset \cA$.
It also follows that \eqref{pro:aproximacion1}, \eqref{pro:aproximacion2}, and
\eqref{pro:aproximacion3} in the statement of the Theorem hold. Moreover,
from \eqref{eq:discretos en un entorno1}, \eqref{eq:discretos en un entorno2} and
\eqref{en un entorno3} we get part b) of \eqref{pro:aproximacion4} (with
$\falpha=\{\alpha_n\}_{n\geq 1}$, ${\gbeta}=\{\beta_n\}_{n\geq 1}$).
It remains to show that $\falpha\prec {\gbeta}$
in the sense of Definition \ref{def:mayorizacion de Neumann}. We
will only prove that $U_k(\falpha)\leq U_k(\gbeta)$ for $k\geq 1$, since the $L_k$
inequalities follow in a similar way. We have
 \begin{eqnarray*}
U_k({\gbeta })&=&\left\{\begin{array}{ll}
\sum_{i=1}^k\beta'_j
&\text{ if } 1\leq k\leq N\, ,\\
\sum_{i=1}^N\beta'_j+(k-N)\lambda_{\max}^{\rm e}(b)
 &\text{ if } N<k\,
\end{array}\right.
\end{eqnarray*}
(recall that $\gamma_L=\lambda_{\max}^{\rm e}(b)$ and that there is
an infinity of $\gamma_L$ in the list
$\{\beta_n\}$)).
For $U_k(\falpha)$ we get
 \begin{eqnarray*}
U_k({\falpha })&=&\left\{\begin{array}{ll}
\sum_{i=1}^{k}\alpha'_j
&\text{ if } 1\leq k\leq N\, ,\\
\sum_{i=1}^{N}\alpha'_j+\sum_{i=N+1}^k \gamma_{\sigma(i)}
 &\text{ if } N<k\,
\end{array}\right.
\end{eqnarray*}
for appropriate choices $\sigma(i)\in\{1,\ldots,L\}$.
If $1\leq k\leq N$, then
\begin{align*}
U_k(g)&=\sum_{i=1}^k \beta'_i=\frac N {t} \ \int_0^{\frac{kt}{N}}\lambda_s(b)\ ds
=\frac{N}{t}\,U_{kt/N}(b)\\
&\geq\frac{N}{t}\,U_{kt/N}(a)= \frac N {t} \ \int_0^{\frac{kt}{N}}\lambda_s(a)\ ds=\sum_{i=1}^k \alpha'_i=U_k(f).
\end{align*}
If $N<k$,
\begin{align*}
U_k(g)&=\frac N {t} \ \int_0^{t}\lambda_s(b)\ ds + (k-N)\lambda_{\max}^{\rm e}(b)\\
& \geq \frac N {t} \ \int_0^{t}\lambda_s(a)\ ds+\sum_{i=N+1}^k \gamma_{\sigma(i)}=U_k(f)
 \end{align*}
 since, by \eqref{eq:cotas entre los coeficientes},
 $\gamma_{\sigma(i)}\leq \lambda_{\max}^{\rm e}(b)$ for all $i$.
\end{proof}

\begin{rem}\label{rem submayo}

Let $\cA\subset \m$ be a diffuse von Neumann subalgebra. Fix
$a\in \cA^{+}$, $b\in \m^+$ such that $a\prec_w b$ and let $m\in \NN$.
Then a slightly modified version of the proof of Proposition \ref{pro:aproximacion}
(with $r_3=s_3=0$, $\lambda_{\min}^{\rm e}(b)=\lambda_{\min}^{\rm e}(a)=0$) shows that
there exist $\{p_n\}_{n\geq 1}\subset \cP ( \cA),\, \{q_n\}_{n\geq 1}\subset \cP ( \m)$
and $\falpha,\gbeta\in\ell^\infty(\NN)^+$ such that conditions
\eqref{pro:aproximacion1}-\eqref{pro:aproximacion3} and \eqref{pro:aproximacion4b}
hold, and such that $f\prec_w g$.
We will use these facts for the proof of the contractive Schur-Horn theorem
(Theorem \ref{teo:SH contractivo}).
\end{rem}

\begin{lem}\label{lem:embedding}
Let $\cN\subset\m$ be a von Neumann subalgebra, with $E_\cN$ the unique trace-preserving conditional
expectation onto $\cN$. Let $\{p_j\}_{j\in \NN}\subset\cZ(\cN)$ be a family of
mutually orthogonal projections,
pairwise equivalent in $\m$. Let  $\{e_{ij}\}$  be a system of matrix units in $B(H)$.
Then there exists a (possibly non-unital) normal *-monomorphism
$\pi:B(H)\to\m$ such that
\begin{equation}\label{eq:embedding1}
\pi(e_{jj})=p_j,\ \ j\in \NN,
\end{equation}
\begin{equation}\label{eq:embedding2}
E_\cN(\pi(x))=\pi(P_D(x)),\ \ x\in B(H).
\end{equation}
\end{lem}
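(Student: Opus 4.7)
The plan is to build $\pi$ from a system of matrix units in $\m$ extending the given diagonal projections $p_j$, and then to use the hypothesis $p_j\in \cZ(\cN)$ to show that $E_\cN$ annihilates every off-diagonal matrix unit.

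First, using that $p_j\sim p_1$ in $\m$ for every $j$, I would pick partial isometries $v_{1j}\in\m$ with $v_{1j}v_{1j}^*=p_1$, $v_{1j}^*v_{1j}=p_j$, taking $v_{11}=p_1$, and set $v_{ij}:=v_{1i}^*v_{1j}$. A routine check using $v_{1k}v_{1k}^*=p_1$ gives the matrix unit relations $v_{ij}^*=v_{ji}$, $v_{ij}v_{kl}=\delta_{jk}v_{il}$, and $v_{ii}=p_i$. Letting $P=\sum_j p_j\in \cP(\m)$ (strong sum), the von Neumann subalgebra $\mathcal{R}\subseteq P\m P$ generated by $\{v_{ij}\}_{i,j\in \NN}$ is a type I$_\infty$ factor, and the assignment $e_{ij}\mapsto v_{ij}$ extends to a $*$-isomorphism $\pi:B(H)\to \mathcal{R}\subset \m$, which is automatically normal since it is a $*$-isomorphism between von Neumann algebras. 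This $\pi$ is a normal $*$-monomorphism satisfying \eqref{eq:embedding1}.

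For \eqref{eq:embedding2}, by normality of $\pi$, $E_\cN$ and $P_D$ it suffices to verify the identity on each matrix unit $e_{ij}$. For $i=j$ both sides equal $p_i$, since $p_i\in \cN$ and $P_D(e_{ii})=e_{ii}$. For $i\neq j$ one has $P_D(e_{ij})=0$, so the goal reduces to showing $E_\cN(v_{ij})=0$. Since $v_{ij}=p_iv_{ij}p_j$ with $p_i,p_j\in\cN$, the $\cN$-bimodule property of $E_\cN$ gives $E_\cN(v_{ij})=p_i\,E_\cN(v_{ij})\,p_j$; and because $p_i\in\cZ(\cN)$ commutes with $E_\cN(v_{ij})\in\cN$, this equals $E_\cN(v_{ij})\,p_ip_j=0$.

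The main obstacle is really just the intertwining property \eqref{eq:embedding2}, and the paragraph above is its entire content; the hypothesis that $p_j$ lies in the \emph{center} of $\cN$ is used precisely once, in the last step of that computation, to move $p_i$ past $E_\cN(v_{ij})$. The construction of the matrix units and the extension of $\pi$ to a normal embedding are standard facts about countable systems of matrix units in a von Neumann algebra, so I would cite them rather than redevelop the argument in detail.
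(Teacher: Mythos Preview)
Your proof is correct and follows essentially the same approach as the paper: build a system of matrix units $\{v_{ij}\}$ extending the $p_j$, define $\pi$ so that $\pi(e_{ij})=v_{ij}$, and verify \eqref{eq:embedding2} on matrix units via the computation $E_\cN(v_{ij})=p_i\,E_\cN(v_{ij})\,p_j=E_\cN(v_{ij})\,p_ip_j=0$ for $i\neq j$, using $p_i\in\cZ(\cN)$. The only cosmetic difference is that the paper routes the construction of $\pi$ through the tensor decomposition $p\m p\simeq B(H)\otimes p_{11}\m p_{11}$ rather than citing directly that the matrix units generate a type~I$_\infty$ factor, but the content is identical.
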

\begin{proof}
Let $p=\sum_jp_j$. Since the projections $\{p_j\}_j$ are mutually
orthogonal and equivalent, they can be extended to a system of
matrix units $\{p_{ij}\}_{i,j\in \NN}$ in $p\m p$, with
$p_{ij}^*=p_{ji}$, $p_{ij} p_{kh}=\delta_{jk}\,p_{ih}$,
$p_{jj}=p_j$ for all $i,j,k,h\in \NN$. Also, it is easy to check
(using that $p_j\in\cZ(\cN)$ for all $j$)
that $E_\cN(p_{ij})=\delta_{ij}\,p_j$.

Since $p$ is an infinite projection, $p\m p$ is a II$_\infty$-factor.
It is standard that $p\m p\simeq
B(H)\otimes \,p_{11}\m p_{11}$ via the (normal) *-isomorphism
\[
\eta:y\mapsto\,\sum_{i,j}\,e_{ij}\otimes\,p_{1i}\ y\  p_{j1}.
\]
In particular, $\eta(p_{ij})=e_{ij}\otimes p_{11}$.
Now let $\pi:B(H)\to \m$ be given by $\pi(x)=\eta^{-1}(x\otimes p_{11})$. So $\pi(e_{ij})=p_{ij}$ for all
$i,j$.
For any $x\in B(H)$, we have
$x=\sum_{i,j}x_{ij}\,e_{ij}$ for coefficients $x_{ij}\in\CC$. If $\pi(x)=0$, then for all $i,j$ we have
\begin{align*}
0&=p_{ij}\, \pi(x)\,p_{ij}=\pi(e_{ij})\,\pi(x)\,\pi(e_{ij})=\pi(e_{ij}\,x\,e_{ij})=\pi(x_{ij}\,e_{ij})=x_{ij}\,p_{ij}.
\end{align*}
So $x_{ij}=0$ for all $i,j$ and this shows that $\pi$ is a monomorphism. Finally,
using the normality of $\pi$ and
$E_\cN$,
\begin{align*}
\pi(P_D(x))&=\pi\left(\sum_j x_{jj}\ e_{jj}\right)=\sum_jx_{jj}\ \pi(e_{jj})
=\sum_jx_{jj}\ p_{jj}\\
&=\sum_{i,j}x_{ij}\ E_\cN(p_{ij})=E_\cN\left(\sum_{i,j}\ x_{ij}\ p_{ij}\right)
=E_\cN(\pi(x)).\qedhere
\end{align*}
\end{proof}

The characterization of $U_t$ in Lemma \ref{lem: u_t con proyecs} allows us
to prove that conditional expectations are ``contractive'' from  a majorization point of view:

\begin{lem}\label{proposition:la esperanza es mayorizada}
Let $\cA\subset \m$ be a diffuse abelian von Neumann subalgebra.
Then, for every $b\in\m^{\rm sa}$, we have $E_\cA(b)\prec b$.
\end{lem}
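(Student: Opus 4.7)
The plan is to use Lemma \ref{lem: u_t con proyecs} together with the trace-preserving and bimodular properties of $E_\cA$ to rewrite the supremum defining $U_t(E_\cA(b))$ as a supremum over positive contractions of the form $E_\cA(p)$, and then show this is dominated by $U_t(b)$ via a Jensen-type argument based on the concavity of $t\mapsto U_t(b)$.

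More precisely, first I would fix $p\in\cP_t(\m)$ and use that $E_\cA(b)\in\cA$ commutes in trace with $p$ in the sense that $\tau(E_\cA(b)\,p)=\tau(E_\cA(E_\cA(b)\,p))=\tau(E_\cA(b)\,E_\cA(p))=\tau(b\,E_\cA(p))$, where the last equality uses that $E_\cA$ is trace-preserving. Setting $q=E_\cA(p)$, one has $q\in\cA$ with $0\leq q\leq I$ and $\tau(q)=\tau(p)=t$. So the problem reduces to showing that for every such positive contraction $q$,
\[
\tau(b\,q)\leq U_t(b).
\]

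The heart of the proof is this reduction to positive contractions. For this, use the spectral decomposition to write $q=\int_0^1 p^q(\lambda,1]\,d\lambda$, so that
\[
\tau(b\,q)=\int_0^1 \tau\!\bigl(b\,p^q(\lambda,1]\bigr)\,d\lambda.
\]
Setting $s(\lambda)=\tau(p^q(\lambda,1])$, Fubini gives $\int_0^1 s(\lambda)\,d\lambda=\tau(q)=t$. Since $p^q(\lambda,1]\in\cP_{s(\lambda)}(\m)$, Lemma \ref{lem: u_t con proyecs} gives $\tau(b\,p^q(\lambda,1])\leq U_{s(\lambda)}(b)$. Finally, the function $s\mapsto U_s(b)=\int_0^s\lambda_r(b)\,dr$ is concave on $\RR^+$ (as $\lambda_r(b)$ is non-increasing in $r$), so Jensen's inequality applied to $\lambda\mapsto s(\lambda)$ with respect to Lebesgue measure on $[0,1]$ yields
\[
\tau(b\,q)\leq\int_0^1 U_{s(\lambda)}(b)\,d\lambda\leq U_{\int_0^1 s(\lambda)\,d\lambda}(b)=U_t(b).
\]
Taking the supremum over $p\in\cP_t(\m)$ and invoking Lemma \ref{lem: u_t con proyecs} again yields $U_t(E_\cA(b))\leq U_t(b)$. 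Finally, applying the same argument to $-b$ (and using $L_t(x)=-U_t(-x)$ together with $E_\cA(-b)=-E_\cA(b)$) gives $L_t(E_\cA(b))\geq L_t(b)$, completing the proof that $E_\cA(b)\prec b$.

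The main technical obstacle is the reduction to positive contractions followed by the Jensen step; once the concavity of $t\mapsto U_t(b)$ is recognized the argument is short, and in fact this furnishes a continuous analogue of Lemma \ref{lemma:U_k contractivo} for arbitrary selfadjoint (not necessarily positive) $b$.
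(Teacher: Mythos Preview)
Your argument is correct, but it is considerably more elaborate than necessary. The paper's proof exploits one observation you overlooked: since $E_\cA(b)\in\cA$ and $\cA$ is itself a diffuse semifinite von Neumann algebra, Lemma~\ref{lem: u_t con proyecs} can be applied \emph{inside} $\cA$. Thus $U_t(E_\cA(b))$ is already realized (up to $\e$) by a projection $q\in\cP_t(\cA)$, and for such $q$ the bimodule property gives $\tau(E_\cA(b)\,q)=\tau(E_\cA(bq))=\tau(bq)\leq U_t(b)$ immediately---no layer-cake decomposition or Jensen step is needed.

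Your route instead starts from projections $p\in\cP_t(\m)$, pushes them to positive contractions $q=E_\cA(p)$, and then proves the general inequality $\tau(bq)\leq U_{\tau(q)}(b)$ for arbitrary $0\leq q\leq I$ with $\tau(q)<\infty$ via concavity of $s\mapsto U_s(b)$. This is a valid and self-contained argument (the Fubini interchange is justified since $|\tau(b\,p^q(\lambda,1])|\leq\|b\|\,s(\lambda)$ with $s\in L^1[0,1]$), and as you note it yields a genuine continuous analogue of Lemma~\ref{lemma:U_k contractivo} valid for all selfadjoint $b$. So your detour buys an auxiliary result of independent interest, at the cost of missing the two-line proof that the diffuseness of $\cA$ affords.
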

\begin{proof}
Fix $t>0$ and let $\e>0$.
Then we can apply Lemma \ref{lem: u_t con proyecs} in $\cA$ to get a projection $q\in\cP(\cA)$
with $\tau(q)=t$ and such that $U_t(E_\cA(b))\leq\tau(E_\cA(b)\,q)+\e$.
Since $\tau(E_\cA(b)\,q)=\tau(E_\cA(b\,q))=\tau(b\,q)\leq U_t(b)$, we conclude that
 $U_t(E_\cA(b))\leq U_t(b)+\e$ for all
$\e>0$; so, $U_t(E_\cA(b))\leq U_t(b)$. Applying the same proof to $-b$,
we get $L_t(E_\cA(b))=-U_t(E_\cA(-b))\geq-U_t(-b))=L_t(b)$. As $t$ was arbitrary,
we get  $E_\cA(b)\prec b$.
\end{proof}

We are finally in position to state and prove our main theorem.

\begin{teo}[A Schur-Horn theorem for II$_\infty$-factors]\label{teo:SH}
Let $\cA\subset\m$ be a diffuse abelian von Neumann subalgebra. Then, for any
$b\in\m^{\rm sa}$,
\[
\overline{E_\cA(\um(b))}^\cT=\{a\in\cA^{\rm sa}:\ a\prec b\}.
\]
\end{teo}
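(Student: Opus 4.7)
The proof splits into two inclusions. For the easy direction $\overline{E_\cA(\um(b))}^\cT \subseteq \{a \in \cA^{\rm sa} : a \prec b\}$, I will note that for any $u \in \um$, Proposition \ref{pro:mayo preservada} gives $u b u^* \prec b$ and Lemma \ref{proposition:la esperanza es mayorizada} gives $E_\cA(u b u^*) \prec u b u^*$; combining these via the transitivity of $\prec$ (immediate from Definition \ref{def:mayorizacion}), every element of $E_\cA(\um(b))$ lies in $\Omega_\m(b) \cap \cA^{\rm sa}$. This set is $\cT$-closed by Proposition \ref{pro:mayo preservada} and the $\cT$-continuity of $E_\cA$ (Corollary \ref{lem:propiedades de T}), so its $\cT$-closure stays inside.

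The reverse inclusion is the substantive part. Fixing $a \in \cA^{\rm sa}$ with $a \prec b$ and $m \in \NN$, my plan is to construct a single unitary $u \in \um$ making $E_\cA(u b u^*)$ close to $a$ in $\cT$, with error controlled by $1/m$. First I will apply Proposition \ref{pro:aproximacion} to obtain mutually orthogonal, equal-trace families $\{p_n\} \subset \cA$ and $\{q_n\} \subset \m$, together with sequences $f \prec g$ in $\ell^\infty_\RR(\NN)$, such that $a - \sum_n f(n)\,p_n$ and $b - \sum_n g(n)\,q_n$ both lie in $V(1/m, 1/m)$, while $1 - \sum p_n$ and $1 - \sum q_n$ share the same trace (less than $1/m$). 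Since $\m$ is a factor, this matching of traces yields a unitary $u_0 \in \m$ with $u_0 q_n u_0^* = p_n$ for every $n$, which in particular conjugates $\sum g(n)\,q_n$ to $\sum g(n)\,p_n$. Next I will invoke Lemma \ref{lem:embedding} with $\cN = \cA$ (so $p_n \in \cZ(\cA) = \cA$ automatically) to produce a normal $*$-monomorphism $\pi : B(H) \to \m$ satisfying $\pi(e_{nn}) = p_n$ and $E_\cA \circ \pi = \pi \circ P_D$; this sends $M_f \mapsto \sum_n f(n)\,p_n$ and $M_g \mapsto \sum_n g(n)\,p_n$.

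With this discretization in place, I will apply the $B(H)$-Schur-Horn theorem (Theorem \ref{theorem:Neumann 2.18-3.13}, together with \eqref{eq:equivalencia de las dos mayorizaciones}) to obtain $U \in \cU(H)$ with $\|M_f - P_D(U M_g U^*)\| < 1/m$. Setting $p = \sum p_n = \pi(1_{B(H)})$, I will lift $U$ to the unitary $v = \pi(U) + (1 - p) \in \m$; since $\pi(M_g)$ is supported on $p$, a short computation gives $v\,\pi(M_g)\,v^* = \pi(U M_g U^*)$, whence $E_\cA(v\,\pi(M_g)\,v^*) = \pi(P_D(U M_g U^*))$ is within $1/m$ of $\pi(M_f) = \sum_n f(n)\,p_n$ in operator norm. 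The sought unitary is $u = v u_0 \in \um$: unitary invariance of the measure topology gives $u b u^* - v\,\pi(M_g)\,v^* \in V(1/m, 1/m)$, and a triangle-inequality argument combining the $\cT$-continuity of $E_\cA$ (Corollary \ref{lem:propiedades de T}) with $\sum_n f(n)\,p_n \approx a$ will bound $E_\cA(u b u^*) - a$ in $\cT$ by a quantity going to $0$ as $m \to \infty$.

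The main technical obstacle has already been sequestered in Proposition \ref{pro:aproximacion}, which reduces the continuous II$_\infty$-setting to a countable discrete one; the embedding Lemma \ref{lem:embedding} is the key bridge, transferring Neumann's $B(H)$-result into $\m$ while preserving compatibility with the conditional expectation onto $\cA$. The delicate points I expect to handle carefully are the matching of residual traces in Step 1 (so that a genuine unitary $u_0$ — not a partial isometry — exists) and the estimate in the final triangle inequality, where the error in $V(1/m, 1/m)$ must be transferred through $E_\cA$ without inflating it uncontrollably, which is exactly what Corollary \ref{lem:propiedades de T} guarantees on bounded sets.
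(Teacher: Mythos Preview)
Your proposal is correct and follows essentially the same approach as the paper: discretize via Proposition~\ref{pro:aproximacion}, embed $B(H)$ via Lemma~\ref{lem:embedding}, apply Neumann's Theorem~\ref{theorem:Neumann 2.18-3.13}, and assemble the final unitary from the $B(H)$-unitary together with the conjugation $q_n\mapsto p_n$. Your choice of lift $v=\pi(U)+(1-p)$ is a minor (and arguably cleaner) variant of the paper's $(\pi(v)+z)w$, but otherwise the arguments coincide.
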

\begin{proof}
By Proposition \ref{pro:mayo preservada} and Lemma \ref{proposition:la esperanza es mayorizada},
$\overline{E_\cA(\um(b))}^\cT\subset\{a\in\cA:\ a\prec b\}$.
To show the reverse inclusion, fix $a\in\cA^{\rm sa}$ with $a\prec b$ and fix $m\in\NN$.
Applying Proposition \ref{pro:aproximacion}  to $a,b$ we obtain
sequences $\falpha=\{\alpha_n\}$, $\gbeta=\{\beta_n\}\subset\ell^\infty_\RR(\NN)$,
$\{p_n\}\subset\cP(\cA)$,
$\{q_n\}\subset\cP(\m)$ with
\begin{equation}\label{eq:SH:1}
p_i\ p_j=q_i\ q_j=0\mbox{ if }i\ne j; \ \ \tau(p_1)=\tau(p_j)=\tau(q_j)\mbox{ for all }j;
\end{equation}
\begin{equation}\label{eq:SH:2}
\tau(1-\sum_{n\geq 1}p_n)=\tau(1-\sum_{n\geq 1}q_n)< \frac{1}{m};
\end{equation}
\begin{equation}\label{eq:aproximaciones}
(a-\sum_{n\geq 1} \alpha_n\, p_n ),
\  (b-\sum_{n\geq 1} \beta_n\, q_n )\in V(\frac{1}{m},\frac{1}{m} );
\end{equation}
\begin{equation*}
\falpha\prec\gbeta.
\end{equation*}
By Theorem \ref{theorem:Neumann 2.18-3.13}
there exists a unitary $v\in B(H)$
such that
\begin{equation*}\label{eq:matrices}
\|M_\falpha-P_D(v \, M_\gbeta \,v^*)\|<\frac1{m}.
\end{equation*}
The conditions on the projections in \eqref{eq:SH:1} and \eqref{eq:SH:2}
guarantee that we can
choose $w\in\um$ with $w\,q_n\,w^*=p_n$ for all $n$. Let $p=\sum_np_n$, $q=\sum_nq_n$; then by
\eqref{eq:SH:2} there
exists a partial isometry $z\in\m$ with $z^*z=p^\perp$, $zz^*=q^\perp$. Let $u$ be the unitary
$u=(\pi(v)+z)\,w$, where $\pi$ is the *-monomorphism
from Lemma \ref{lem:embedding} with respect to the projections $\{p_n\}_n$.
From \eqref{eq:aproximaciones},
\begin{equation*}\label{eq:SH_1}
a-\pi(M_\falpha)\in V(\frac1m,\frac1{m}), \ \ \ w\,b\,w^*-\pi(M_\gbeta)\in V(\frac1m,\frac1{m}).
\end{equation*}
Note that  by \eqref{eq:SH:2} we have $\tau(p^\perp)<1/m$, $\tau(q^\perp)<1/m$, so
$z,z^*\in V(\e,1/m)$ for any $\e>0$. From this we conclude that
\[
(\pi(v)+z)\ \pi(M_g)\ (\pi(v)+z)^*-\pi(vM_g v^*)\in V(\e,\frac2m),\ \ \e>0.
\]It follows that
\begin{equation*}\label{eq:SH 1.5}
u\,b\,u^*-\pi(v\,M_g\,v^*)\in V(\frac2m,\frac{3}m).
\end{equation*}
Letting $m$ vary all along $\NN$, we have constructed sequences of unitaries $\{u_m\}_m\subset\m$ and
$\{v_m\}_m\subset \U(H)$, and sequences $\{f_m\}_m,\{g_m\}_m\subset\ell^\infty_\RR(\NN)$ with
\begin{equation}\label{eq:SH 1.6}
\pi(M_{\falpha_m})-a\xrightarrow[m\to\infty]\cT0, \ \
M_{\falpha_m}-P_D(v_{m}\, M_{\gbeta_m}\, v_{m}^*)\xrightarrow[m\to\infty]{\|\ \|}0,
\end{equation}
\[
u_m\,b\,u_m^*-\pi(v_{m}\,M_{\gbeta_m}\,v_{m}^*)\xrightarrow[m\to\infty]\cT0.
\]
Using that $\pi$ is a *-monomorphism, the $\cT$-continuity of $E_\cA$ (Corollary \ref{lem:propiedades
de T}) and the fact that $E_\cA\circ\pi=\pi\circ P_D$ (Lemma \ref{lem:embedding}) we get from \eqref{eq:SH 1.6} that
\begin{equation}\label{eq:SH_2}
\pi(M_{\falpha_m})-\pi(P_D(v_{m}\,M_{\gbeta_m} \,v_{m}^*))\xrightarrow[m\to\infty]{\| \ \|}0,
\end{equation}
\begin{equation}\label{eq:SH_3}
E_\cA(u_m\,b\,u_m^*)-\pi(P_D(v_{m}\,M_{\gbeta_m} \,v_{m}^*))\xrightarrow[m\to\infty]\cT0.
\end{equation}
From \eqref{eq:SH 1.6}, \eqref{eq:SH_2}, and \eqref{eq:SH_3},
\[
E(u_m\,b\,u_m^*)-a\xrightarrow[m\to\infty]\cT0.
\]
That is, $a\in\overline{E_\cA(\um(b))}^\cT$.
\end{proof}

\begin{rem}It is natural to ask whether one can remove the closure bar in the description of the set
$\{a\in\cA^{\rm sa}: a\prec b\}$ given in Theorem \ref{teo:SH}. Next we show an example in
which
\begin{equation*}\label{ole}
E_\cA(\um(b))\subset E_\cA(\overline{\um(b)}^\cT)\subsetneq\overline{E_\cA(\um(b))}^\cT \,.
\end{equation*}
This implies that the characterization of $\{a\in\cA^{\rm sa}: a\prec b\}$ given in
Theorem \ref{teo:SH} cannot be strengthened in the II$_\infty$ case.

We consider $p\in\cP(\m)$ an infinite projection with $p^\perp$ also infinite.
Then $U_t(p)=t$, $L_t(p)=0$ for all $t$. Since $U_t(I)=t$, $L_t(I)=t$, we have $I\prec p\,$; then
 \begin{equation}\label{un ejem}
 I\in \overline{E_\cA(\um(p))}^{\cT} \ \text{ but } \ I\not\in E_\cA(\overline{\um(p)}^\cT)\, .\end{equation}
 Indeed, Theorem \ref{teo:SH} guarantees the claim to the left in \eqref{un ejem}.
 On the other hand, assume that there exists $x\in\overline{ \U_\m(p)}^\mathcal T$ with $I=E_\cA(x)$.
 By Corollary \ref{lem:propiedades de T}, $0\leq x\leq I$ and then
\[
0=\tau(I-E_\cA(x))=\tau(E_\cA(I-x))=\tau(I-x)\,.
\]
This last fact implies that $I=x\in \overline{ \U_\m(p)}^\mathcal T$ by the faithfulness of $\tau$. But as $\|\cdot\|_{(1)}$ is a unitarily invariant norm,
for any $u\in\um$ we get
\[
\|I-u\,p\,u^*\|_{(1)}=\|u\,(I-p)\,u^*\|_{(1)}=\|I-p\|_{(1)}>0
\]
as $p\ne I$. Since $\|\cdot\|_{(1)}$ is $\cT$-continuous
(see Proposition \ref{proposition:la top de la medida esta dada por una norma en acotados}),
there is positive distance from $I$ to the $\cT$-closure of the
unitary orbit of $p$, a contradiction.

It would be interesting to have a description of the set
$E_\cA(\overline{\um(b)}^\cT)$ for an abelian diffuse von Neumann subalgebra of a general
$\sigma$-finite semifinite factor $(\m,\tau)$. But even in the I$_\infty$ factor case this
problem is known to be hard (see \cite[Thm 15]{Kadison2003}, \cite{arv2006,ArvesonKadison2007}
for further discussion). In the II$_1$-factor case Arveson and Kadison
\cite{ArvesonKadison2007} conjectured that
\begin{equation}\label{eq conj}
E_\cA\left( \overline{ \U_\m(b)}^\mathcal T \right)= \{a\in\cA^{\rm sa}: a\prec b\}\,,
\end{equation}
which is still an open problem (see \cite{ArgeramiMassey2007,ArgeramiMassey2008a,ArgeramiMassey2009} for a detailed discussion).
\qed\end{rem}

The following result shows that the notion of majorization in $\m^{\rm sa}$ from
 Definition \ref{def:mayorizacion}
coincides with the majorization introduced by Hiai in \cite{Hiai1989}.
Thus, several other characterizations of majorization
can be obtained from  Hiai's work. Following Hiai, we say that a map
is \emph{doubly stochastic} if it is unital, positive and preserves the trace.

\begin{cor}\label{corollary:Hiai}
Let $\cA\subset\m$ be a diffuse abelian von Neumann subalgebra and let $a,b\in\m^{\rm sa}$. Then the following statements are equivalent:
\begin{enumerate}
\item\label{corollary:Hiai:1} $a\prec b$;
\item\label{corollary:Hiai:2} $a\in\overline{E_\cA(\um(b))}^\cT$;
\item\label{corollary:Hiai:3} $a\in\overline{\co\{\um(b)\}}^\cT$;
\item\label{corollary:Hiai:4} there exists a doubly stochastic map $F$ on $\m$ with $a=F(b)$;
\item\label{corollary:Hiai:5} there exists a completely positive doubly stochastic map $F$ on $\m$ with $a=F(b)$;
\item\label{corollary:Hiai:6} $\tau(f(a))\leq\tau(f(b))$ for every convex function $f:I\rightarrow [0,\infty)$ with $\sigma(a),\,\sigma(b)\subset I$.
\item\label{corollary:Hiai:7} $a$ is spectrally majorized by $b$ in the sense of \cite{Hiai1989}.
\end{enumerate}
\end{cor}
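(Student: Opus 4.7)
The plan is to bridge our spectral-scale formulation of majorization with Hiai's Choquet-theoretic one from \cite{Hiai1989}, and then invoke the equivalences already established there. Concretely, the strategy is:

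\textbf{Step 1: (i) $\iff$ (ii) via Theorem \ref{teo:SH}.} Note that (ii) forces $a\in\cA^{\rm sa}$, since $E_\cA(\um(b))\subset\cA^{\rm sa}$ and $\cA$ is $\cT$-closed (being a von Neumann subalgebra). Under this restriction, the equivalence of (i) and (ii) is exactly Theorem \ref{teo:SH}. For a general $a\in\m^{\rm sa}$ satisfying (i), we can always pick a diffuse abelian subalgebra $\cA'\subset\m$ with $a\in\cA'$ (any abelian subalgebra of a $\sigma$-finite II$_\infty$-factor extends to a diffuse masa; alternatively, we can simply use the diffuse abelian subalgebra $W^*(a,e)$ for a suitable complete flag $e$) and re-apply Theorem \ref{teo:SH} to $\cA'$. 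Thus no generality is lost in the chain of equivalences.

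\textbf{Step 2: (i) $\iff$ (vii).} The content of Hiai's spectral majorization is that the spectral measures of $a$ and $b$ compare in the sense of Choquet among positive convex functions on $\RR$. By a standard exercise in Choquet theory, this comparison is equivalent to the family of inequalities $\tau((a-\alpha I)^+)\leq\tau((b-\alpha I)^+)$ and $\tau((\alpha I-a)^+)\leq\tau((\alpha I-b)^+)$ for every $\alpha\in\RR$. Using the identity $\tau((b-\alpha I)^+)=\int_0^\infty(\lambda_s(b)-\alpha)^+\,ds=\sup_{t\ge0}(U_t(b)-\alpha t)$ (which follows from Lemma \ref{lem: u_t con proyecs} together with \cite[Lemma 4.1]{FackKosaki1986}) and the analogous identity for $L_t$, one sees that these integral inequalities are equivalent to $U_t(a)\leq U_t(b)$ and $L_t(a)\geq L_t(b)$ for all $t\ge0$, i.e.\ to Definition \ref{def:mayorizacion}. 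A Legendre-transform argument, paying attention to the essential endpoints $\lambda_{\max}^{\rm e}$ and $\lambda_{\min}^{\rm e}$ (cf.\ Remark \ref{definicion: bar b y underbar b}), gives the reverse direction.

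\textbf{Step 3: the remaining equivalences via Hiai.} Once (i) $\iff$ (vii) is in hand, the equivalences (iii) $\iff$ (iv) $\iff$ (v) $\iff$ (vi) $\iff$ (vii) are provided by the main results of \cite{Hiai1989} for selfadjoint operators in a semifinite von Neumann algebra. This finishes the chain. For completeness, one can give direct short arguments for the easy directions: (ii) $\Rightarrow$ (iii) because $E_\cA$ is a $\cT$-limit of convex averages of unitary conjugations by elements of $\um(\cA)$ (standard approximation of a conditional expectation onto a diffuse abelian subalgebra); (iii) $\Rightarrow$ (iv), (v) using that finite convex combinations of $u(\cdot)u^*$ are completely positive doubly stochastic maps and that the set of such maps is closed in the point-$\cT$ topology on bounded sets by Corollary \ref{lem:propiedades de T}; (v) $\Rightarrow$ (vi) via operator Jensen's inequality for unital completely positive maps followed by tracial positivity.

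\textbf{Main obstacle.} The only nontrivial translation is Step 2: matching our spectral-scale definition with Hiai's Choquet-theoretic notion when $\sess(b)$ is nonempty and $b$ is not necessarily positive or $\tau$-compact. The bounded/unbounded support of the associated spectral distribution forces some care with the normalization and the behavior at $\pm\infty$, which is precisely why Definition \ref{def:mayorizacion} requires \emph{both} $U_t$ and $L_t$ (as observed in Remark \ref{rem:tau compacto tiene L_t=0}); once this symmetric formulation is used, the translation to the Choquet framework is clean.
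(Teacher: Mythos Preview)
Your proof is correct, and its overall architecture---(i)$\iff$(ii) via Theorem~\ref{teo:SH}, then (iii)--(vii) via \cite{Hiai1989}---matches the paper. You also correctly flag (and handle) the point that (ii) presupposes $a\in\cA$, so a diffuse abelian $\cA'\ni a$ must be chosen; the paper tacitly assumes this.

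Where you genuinely diverge is in how you close the loop between $\{(\mathrm i),(\mathrm{ii})\}$ and $\{(\mathrm{iii})\text{--}(\mathrm{vii})\}$. The paper argues (iii)$\Rightarrow$(i) via Proposition~\ref{pro:mayo preservada}, and then proves (i)$\Rightarrow$(iv) by a soft compactness argument: starting from the unitaries $u_j$ supplied by Theorem~\ref{teo:SH}, it takes a BW-limit of the completely positive maps $E_\cA(u_j\,\cdot\,u_j^*)$ to manufacture a doubly stochastic $F$ with $F(b)=a$. Your route is instead the direct spectral translation (i)$\iff$(vii), based on the identity $\tau((b-\alpha I)^+)=\sup_{t\ge0}(U_t(b)-\alpha t)$ and a Legendre-biconjugate argument (using that $t\mapsto U_t(b)$ is concave). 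Your approach is more elementary and makes the equivalence of the two majorization notions transparent at the level of spectral data, without producing any auxiliary map; the paper's approach has the virtue of \emph{constructing} the doubly stochastic map in (iv)/(v) as an explicit limit, which is of independent interest. Your optional ``direct'' argument for (ii)$\Rightarrow$(iii) (approximating $E_\cA$ by averages of unitary conjugates from $\cU_\cA$) is plausible but would need justification in the II$_\infty$ setting; since you do not rely on it, this does not affect the validity of your proof.
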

\begin{proof}
By Theorem \ref{teo:SH}, \eqref{corollary:Hiai:1} and \eqref{corollary:Hiai:2} are equivalent.
The statements \eqref{corollary:Hiai:3}-\eqref{corollary:Hiai:7}
are mutually equivalent by \cite[Theorem 2.2]{Hiai1989}. Also,
\eqref{corollary:Hiai:3} implies \eqref{corollary:Hiai:1}
by Proposition \ref{pro:mayo preservada}. So it will be enough to
show that \eqref{corollary:Hiai:1} implies \eqref{corollary:Hiai:4}.

Let $a\in\cA$ with $a\prec b$. By Theorem
\ref{teo:SH}, there exist unitaries $\{u_j\}\subset\m$ such that $a=\lim_\cT E_\cA(u_jbu_j^*)$.
Consider the sequence of completely positive contractions
$E_\cA(u_j\cdot u_j^*):\m\to\cA$; by compactness in the BW topology \cite[Theorem 7.4]{Paulsen2002},
this sequence
admits a convergent (pointwise ultraweakly)
subnet $\{E_\cA(u_{j_k}\cdot u_{j_k}^*)\}$.
Let $F$ be the limit of such subnet. Since $a=\lim_\cT E_\cA(u_jbu_j^*)$ and
$F(b)=\lim_{\sigma-\mbox{wot}}E_\cA(u_{j_k}b u_{j_k}^*)$, we conclude (mimicking the argument
in the proof of Lemma 3.3 in \cite{Hiai1989}) that $F(b)=a$. It is easy to check that
$F$ is unital and that it preserves the trace.
\end{proof}

We finish this section with contractive and $L^1$ analogs of Theorem \ref{teo:SH}.

\begin{teo}\label{teo:SH contractivo}
Let $\cA\subset\m$ be a diffuse abelian von Neumann subalgebra and let $b\in\m^{+}$. Then
\begin{equation}\label{teo:SH contractivo:1}
\overline{E_\cA(\{c\,b\,c^*: \|c\|\leq1\})}^\cT=\{a\in\cA^+:\ a\prec_w b\}.
\end{equation}
\end{teo}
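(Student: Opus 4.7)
The plan is to prove both inclusions by closely mirroring the proof of Theorem \ref{teo:SH}, replacing the unitary orbit by the contractive orbit, majorization by submajorization, and Theorem \ref{theorem:Neumann 2.18-3.13} by its contractive counterpart Theorem \ref{theorem:neumann contractivo}.

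For the inclusion ``$\subset$'', I would first establish the semifinite analog of Lemma \ref{lemma:U_k contractivo}: for $b\in\m^+$ and $t\geq 0$,
\[
U_t(b)=\sup\{\tau(b x):\ x\in\m,\ 0\leq x\leq I,\ \tau(x)\leq t\}.
\]
The inequality ``$\geq$'' is Lemma \ref{lem: u_t con proyecs}. For ``$\leq$'', one invokes the Fack--Kosaki trace inequality $\tau(bx)\leq\int_0^\infty\lambda_s(b)\,\lambda_s(x)\,ds$, together with an integral version of Lemma \ref{lemma:la desigualdad numerica} applied to the non-increasing function $\lambda_s(b)$ and to $\lambda_s(x)\in[0,1]$ with $\int_0^\infty\lambda_s(x)\,ds=\tau(x)\leq t$. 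Now, for any contraction $c\in\m$ and any $p\in\cP_t(\m)$, the element $c^*pc$ is a positive contraction with $\tau(c^*pc)=\tau(p\,cc^*)\leq t$; hence, by the above formula and Lemma \ref{lem: u_t con proyecs},
\[
U_t(cbc^*)=\sup_{p}\tau(b\,c^*pc)\leq U_t(b),
\]
i.e., $cbc^*\prec_w b$. Combining with Lemma \ref{proposition:la esperanza es mayorizada} (which gives $E_\cA(cbc^*)\prec cbc^*$, in particular $E_\cA(cbc^*)\prec_w cbc^*$), we obtain $E_\cA(cbc^*)\prec_w b$. Finally, the set $\{a\in\cA^+:\ a\prec_w b\}$ is $\cT$-closed, since it is bounded ($\|a\|\leq \|b\|$), each $U_t$ is $\cT$-continuous on bounded sets by Lemma \ref{lem:continuidad}, and $\cA^+$ is $\cT$-closed by Corollary \ref{lem:propiedades de T}.

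For the reverse inclusion, fix $a\in\cA^+$ with $a\prec_w b$ and $m\in\NN$. By Remark \ref{rem submayo}, there exist mutually orthogonal $\{p_n\}\subset\cP(\cA)$ and $\{q_n\}\subset\cP(\m)$ (satisfying the equal-trace and complement conditions of Proposition \ref{pro:aproximacion}) and sequences $f,g\in\ell^\infty(\NN)^+$ with $f\prec_w g$, such that
\[
a-\textstyle\sum_n f_n\,p_n\,,\ \ b-\sum_n g_n\,q_n\ \in\ V(1/m,1/m).
\]
By Theorem \ref{theorem:neumann contractivo} there is a contraction $v_m\in B(H)$ with $\|M_f-P_D(v_m M_g v_m^*)\|<1/m$. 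Since $\cA$ is abelian, Lemma \ref{lem:embedding} provides a normal *-monomorphism $\pi:B(H)\to\m$ with $\pi(e_{nn})=p_n$ and $E_\cA\circ\pi=\pi\circ P_D$. Choose a unitary $w_m\in\um$ with $w_m\,q_n\,w_m^*=p_n$ for every $n$, and set $c_m=\pi(v_m)\,w_m\in\m$; then $\|c_m\|\leq 1$. By unitary invariance of the measure topology, $w_m\,b\,w_m^*-\pi(M_g)\in V(1/m,1/m)$, and since left/right multiplication by the bounded operator $\pi(v_m)$ is $\cT$-continuous,
\[
c_m\,b\,c_m^*\,-\,\pi(v_m M_g v_m^*)\ \xrightarrow[m\to\infty]\cT\ 0.
\]
Applying $E_\cA$ ($\cT$-continuous on bounded sets by Corollary \ref{lem:propiedades de T}) and using $E_\cA\circ\pi=\pi\circ P_D$, the isometry of $\pi$, the norm estimate $\|P_D(v_m M_g v_m^*)-M_f\|<1/m$, and the approximation $\pi(M_f)-a\in V(1/m,1/m)$, we conclude that $E_\cA(c_m\,b\,c_m^*)\to a$ in $\cT$, which yields the inclusion.

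The main technical obstacle is the semifinite analog of Lemma \ref{lemma:U_k contractivo} used in the forward inclusion; the remainder of the proof is a direct adaptation of Theorem \ref{teo:SH}, with the welcome simplification that, since $c_m$ only needs to be a contraction, the partial isometry $z$ used in the proof of Theorem \ref{teo:SH} to patch $\pi(v_m)$ into a unitary becomes unnecessary.
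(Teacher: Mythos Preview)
Your proposal is correct and, for the reverse inclusion ``$\supset$'', it is exactly the paper's argument: Remark \ref{rem submayo} in place of Proposition \ref{pro:aproximacion}, Theorem \ref{theorem:neumann contractivo} (equivalently \eqref{theorem:neumann contractivoeq}) in place of Theorem \ref{theorem:Neumann 2.18-3.13}, and the observation that one may take $c_m=\pi(v_m)\,w_m$ without the patching isometry $z$.

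The only difference is in ``$\subset$''. You go through a semifinite analog of Lemma \ref{lemma:U_k contractivo}, proving $U_t(b)=\sup\{\tau(bx):0\le x\le I,\ \tau(x)\le t\}$ via a Fack--Kosaki trace inequality plus an integral version of Lemma \ref{lemma:la desigualdad numerica}, and then apply it with $x=c^*pc$. This is valid, but the paper bypasses all of it: for $b\ge0$ and $\|c\|\le1$ one has $\lambda_t(cbc^*)=\nu_t(cbc^*)\le\nu_t(b)=\lambda_t(b)$ directly by \cite[Lemma 2.5]{FackKosaki1986}, which immediately yields $U_t(cbc^*)\le U_t(b)$ and hence $cbc^*\prec_w b$. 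The rest (Lemma \ref{proposition:la esperanza es mayorizada}, $\cT$-closedness via Lemma \ref{lem:continuidad}) is the same. So your route works, but the paper's forward inclusion is a one-line citation rather than a new lemma.
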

\begin{proof}
If $c\in\m$ is a contraction, then $\lambda_t({c\,b\,c^*})\leq\lambda_t(b)$
\cite[Lemma 2.5]{FackKosaki1986}. So $c\,b\,c^*\prec_w b$
and then Lemmas \ref{proposition:la esperanza es mayorizada}
and \ref{lem:continuidad}
give the inclusion
``$\subset$'' above.

For the reverse inclusion, the proof runs exactly as that of Theorem \ref{teo:SH}, but instead of using Proposition \ref{pro:aproximacion}
and \eqref{theorem:Neumann 2.18-3.13d} to obtain a sequence of
unitary operators in $\m$, we use \eqref{theorem:neumann contractivoeq} and Remark \ref{rem submayo} to obtain a convenient sequence of contractions in $\m$.
\end{proof}

\begin{rem}
The positivity condition in Theorem \ref{teo:SH contractivo} cannot be relaxed to
selfadjointness. As a trivial example, take $b=0$; then $-I\prec_w b$, but $c\,b\,c^*=0$ for all $c$,
so the set on the left in \eqref{teo:SH contractivo:1} is $\{0\}$.
\end{rem}

Recall that $L^1(\m)\cap\m$ consists of those $x\in\m$ with $\tau(|x|)<\infty$,
and that such elements are necessarily $\tau$-compact.

\begin{teo}\label{teo: sh en tipo traza} Let $\cA\subset\m$ be a diffuse abelian von Neumann subalgebra and
let $b\in L^1(\m)\cap\m^{\rm sa}$. Then
\[
\overline{E_\cA(\um(b))}^{\, \|\cdot\|_1}=\{a\in L^1(\m)\cap\cA^{\rm sa}:\ a\prec b, \ \tau(a)=\tau(b)\}
\]
\end{teo}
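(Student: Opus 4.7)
The inclusion $\subseteq$ is immediate from continuity arguments: if $a_j = E_\cA(u_j b u_j^*) \xrightarrow{\|\cdot\|_1} a$, then $a_j \prec b$ (because $u_j b u_j^* \prec b$ trivially and $E_\cA$ contracts majorization by Lemma~\ref{proposition:la esperanza es mayorizada}), and passing to the $\|\cdot\|_1$-limit using the $\|\cdot\|_1$-continuity of $U_t$, $L_t$ (Lemma~\ref{lem:continuidad}) and of $\tau$ yields $a \prec b$ and $\tau(a) = \tau(b)$. Since $L^1(\m) \cap \cA^{\rm sa}$ is $\|\cdot\|_1$-closed, $a$ lies there as well.

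For the nontrivial inclusion $\supseteq$, the plan is to mimic the proof of Theorem~\ref{teo:SH} with every $\cT$-approximation strengthened to a $\|\cdot\|_1$-approximation. This requires three ingredients, which I would establish in turn. The first is an $L^1$-analog of Lemma~\ref{lema: a y b aproximados}: applying Lemma~\ref{lema:en los tau-compactos siempre se puede traza infinita} separately to $a$ and $b$, I obtain $a' \in L^1 \cap \cA^{\rm sa}$ and $b' \in L^1 \cap \m^{\rm sa}$ with $\|a-a'\|_1, \|b-b'\|_1 < \eps$, still satisfying $a' \prec b'$ and $\tau(a') = \tau(b')$, and such that the positive and negative parts of $a'$ and of $b'$ all have infinite-trace support. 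The second is an $L^1$-analog of Proposition~\ref{pro:aproximacion}: because $a', b' \in L^1 \cap \m \subset \cK(\m)$, both spectral scales are integrable and admit complete-flag integral representations, so a stepwise discretization produces mutually orthogonal projections $\{p_n\} \subset \cP(\cA)$, $\{q_n\} \subset \cP(\m)$ of common trace, together with sequences $\falpha, \gbeta \in \ell^1(\NN)_\RR$ satisfying $\falpha \prec \gbeta$ (with equal sums), such that both $\|a' - \sum_n \alpha_n p_n\|_1$ and $\|b' - \sum_n \beta_n q_n\|_1$ are less than $\eps$. The third is a trace-class Schur-Horn theorem for $B(H)$: if $\falpha \prec \gbeta$ in $\ell^1(\NN)_\RR$, then $M_\falpha$ lies in the $\|\cdot\|_1$-closure of $\{P_D(u M_\gbeta u^*) : u \in \U(H)\}$; this can be obtained by approximating $\falpha, \gbeta$ in $\ell^1$ by finitely supported vectors with the same sum and applying the classical finite-dimensional Schur-Horn theorem (in the spirit of the Kaftal--Weiss framework cited in the introduction).

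With these three ingredients in hand, the closing argument is the one from Theorem~\ref{teo:SH}: the $*$-monomorphism $\pi$ of Lemma~\ref{lem:embedding} is trace-preserving, hence $\|\cdot\|_1$-isometric on diagonal operators, so the discretizations transfer to $B(H)$; the trace-class Schur-Horn theorem produces $v \in \U(H)$ with $\|M_\falpha - P_D(v M_\gbeta v^*)\|_1 < \eps$; and combining $\pi(v)$ with a partial isometry absorbing the small-trace residuals $I-\sum p_n$ and $I-\sum q_n$ yields a unitary $u \in \um$ with $\|a - E_\cA(u b u^*)\|_1 = O(\eps)$.

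The main technical difficulty will be the $L^1$-discretization. In Proposition~\ref{pro:aproximacion} one is allowed a $\cT$-small but possibly $\|\cdot\|_1$-large residual coming from the essential spectrum; in the $L^1$ setting this is no longer permitted and one must discretize the entire spectrum while matching $\sum_n \alpha_n$ with $\sum_n \beta_n$ to within $\eps$. The hypothesis $\tau(a) = \tau(b)$ is precisely what makes this matching feasible, and without it the $\|\cdot\|_1$-closure on the left could not enforce the trace constraint that the right-hand side demands.
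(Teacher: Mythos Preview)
Your inclusion $\subseteq$ is exactly what the paper does. For $\supseteq$, however, the paper takes a much shorter route that avoids all three of your additional ingredients.

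The paper's argument runs as follows. First treat the case $b\in\m^+$ (so $a\in\cA^+$). Apply Theorem~\ref{teo:SH} directly to get unitaries $u_j$ with $E_\cA(u_jbu_j^*)\xrightarrow{\cT}a$. Since each $E_\cA(u_jbu_j^*)$ is positive with $\|E_\cA(u_jbu_j^*)\|_1=\tau(b)=\tau(a)=\|a\|_1$, the Fack--Kosaki result \cite[Theorem~3.7]{FackKosaki1986} (convergence in measure of positive operators together with convergence of the trace-norms implies $\|\cdot\|_1$-convergence) upgrades this to $E_\cA(u_jbu_j^*)\xrightarrow{\|\cdot\|_1}a$. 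For general $b$, perturb via Lemma~\ref{lema:en los tau-compactos siempre se puede traza infinita} so that the positive and negative parts of $a'$ and $b'$ have infinite support, conjugate $b'$ by a unitary $v$ to align the supports of $(b')^{\pm}$ with those of $(a')^{\pm}$, and then apply the positive case separately in the two II$_\infty$ corners $r_1\m r_1$ and $r_2\m r_2$.

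Your plan is correct in spirit and would go through, but it is considerably heavier: you propose to rebuild Proposition~\ref{pro:aproximacion} in $L^1$ and to import (or reprove) a trace-class Schur--Horn theorem in $B(H)$, neither of which the paper needs. The paper's economy comes from recognising that the hypothesis $\tau(a)=\tau(b)$ is exactly what turns the already-available $\cT$-approximation into an $L^1$-approximation via Fack--Kosaki, at least for positive operators, and that the selfadjoint case reduces to two positive ones after the perturbation of Lemma~\ref{lema:en los tau-compactos siempre se puede traza infinita}. One minor inaccuracy in your sketch: the embedding $\pi$ of Lemma~\ref{lem:embedding} is not trace-preserving but trace-scaling (by $\tau(p_1)$), so it is a $\|\cdot\|_1$-similarity rather than an isometry; this is harmless for your argument but worth stating correctly.
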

\begin{proof}
 Proposition \ref{pro:mayo preservada} together with Lemma
 \ref{proposition:la esperanza es mayorizada} show that
${E_\cA(\um(b))}\subset\{a\in\cA^{\rm sa}:\ a\prec b,\ \tau(a)=\tau(b)\}$.
Then Lemma \ref{lem:continuidad} and the $\|\cdot\|_1$-continuity of the trace
imply the inclusion of the corresponding closure.

Conversely, suppose that $a\prec b$ and $\tau(a)=\tau(b)$. First assume that $b\in\m^+$.
Then $a\in\cA^+$.
By Theorem
\ref{teo:SH}, there exists a sequence of unitaries $\{u_j\}$ such that
$E_\cA(u_j\,b\,u_j^*)\xrightarrow{\cT}a$. Since $b$ is positive,
$\|E_\cA(u_j\,b\,u_j^*)\|_1=\tau(E_\cA(u_j\,b\,u_j^*))=\tau(b)=\tau(a)=\|a\|_1$. Then
\cite[Theorem 3.7]{FackKosaki1986} guarantees that
$\|E_\cA(u_j\,b\,u_j^*)-a\|_1\to0$.

If $b$ is not positive, we apply Lemma
\ref{lema:en los tau-compactos siempre se puede traza infinita}
to obtain $a'\in\cA$, $b'\in\m$, with
\begin{enumerate}
\item $a'\prec b'$,
\item $\|a'-a\|_1<\varepsilon$, $\|b'-b\|_1<\varepsilon$;
\item $\tau(p^{a'}(0,\infty))=\tau(p^{b'}(0,\infty))=\infty$;
\item $\tau(p^{a'}(-\infty,0))=\tau(p^{b'}(-\infty,0))=\infty$;
\item $p^{a'}(-\infty,0)+p^{a'}(0,\infty)=p^{b'}(-\infty,0)+p^{b'}(0,\infty)=I$.
\end{enumerate}
Let $r_1=p^{a'_+}(0,\infty)$, $r_2=p^{a'_-}(0,\infty)$.
The last three conditions above guarantee that we can find a unitary $v\in\um$ with
\[
v\,(p^{b'_+}(0,\infty))\,v^*=r_1,\
v\,(p^{b'_-}(0,\infty))\,v^*=r_2.
\] Let $b''=vb'v^*$. Then $a'\prec b''$.
Since both are $\tau$-compact, we deduce that $a'_+\prec b''_+$, $a'_-\prec b''_-$.
Note that $a'_+,b''_+\in r_1\m r_1$, $a'_-,b''_-\in r_2\m r_2$. As both $r_1,r_2\in\cA$
are infinite projections, the factors $r_1\m r_1$ and $r_2\m r_2$ are II$_\infty$.
So we can apply the first part of the proof to obtain unitaries
$\{u^{(1)}_j\}\subset\cU(r_1\m r_1)$, $\{u^{(2)}_j\}\subset\cU(r_2\m r_2)$, with
\[
\|E_\cA(u^{(1)}_j\ b''_+\ (u^{(1)}_j)^*)-a'_+\|_1\to0,\ \ \
\|E_\cA(u^{(2)}_j\ b''_-\ (u^{(2)}_j)^*)-a'_-\|_1\to0
\]
Since $r_1+r_2=I$, $r_1r_2=0$, the operators $u_j=(u^{(1)}_j+u^{(2)}_j)v$ are unitaries
in $\m$. Then
\begin{align*}
\|E_\cA(u_j\,b\,u_j^*)-a\|_1&\leq\|E_\cA(u_j\,b\,u_j^*)-E_\cA(u_j\,b'\,u_j^*)\|_1
+\|E_\cA(u_j\,b'\,u_j^*)-a'\|_1+\|a'-a\|_1\\
&\leq\|b'-b\|_1+\|a'-a\|_1+\|E_\cA(u^{(1)}_j\,b''\,(u^{(1)}_j)^*)-a'_+\|_1\\
&\ \ \ +\|E_\cA(u^{(2)}_j\,b''\,(u^{(2)}_j)^*)-a'_-\|_1\\
&\leq2\e+\|E_\cA(u^{(1)}_j\,b''_+\,(u^{(1)}_j)^*)-a'_+\|_1
+\|E_\cA(u^{(2)}_j\,b''_-\,(u^{(2)}_j)^*)-a'_-\|_1.
\end{align*}
So $\limsup_j\|E_\cA(u_j\,b\,u_j^*)-a\|_1<2\e$, and as $\e$ was arbitrary we conclude that
$\lim_j\|E_\cA(u_j\,b\,u_j^*)-a\|_1=0$, i.e. $a\in\overline{E_\cA(\um(b))}^{\,\|\cdot\|_1}$.
\end{proof}

\begin{rem}
The condition $\tau(a)=\tau(b)$ in Theorem \ref{teo: sh en tipo traza}
cannot be removed because of the $\|\cdot\|_1$-continuity of the trace $\tau$.
Actually, below we characterize the case where the trace restriction is removed but only in the case of positive operators.
\end{rem}

\begin{teo}\label{teo: sh en tipo traza, contractivo}
Let $\cA\subset\m$ be a diffuse abelian von Neumann subalgebra and
let $b\in L^1(\m)\cap\m^+$. Then
\[
\overline{E_\cA(\{c\,b\,c^*: \|c\|\leq1\})}^{\,\|\cdot\|_1}
=\{a\in\cA^+:\ a\prec_w b\}
=\{a\in\cA^+:\ a\prec b\}.
\]
\end{teo}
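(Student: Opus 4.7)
The plan is to handle the second equality and the forward inclusion by appealing directly to results already established in the paper, and then to prove the reverse inclusion by a spectral truncation of $b$ that reduces the problem to the trace-preserving Theorem \ref{teo: sh en tipo traza}. The equality $\{a\in\cA^+:a\prec_w b\}=\{a\in\cA^+:a\prec b\}$ is immediate from Remark \ref{rem:tau compacto tiene L_t=0}: since $b\in L^1(\m)\cap\m^+\subseteq\cK(\m)^+$ we have $L_t(b)\equiv 0$, and for a positive $a$ the inequality $L_t(a)\geq 0=L_t(b)$ is automatic. The forward inclusion ``$\subseteq$'' of the first equality will follow the template of Theorem \ref{teo:SH contractivo}: for any contraction $c\in\m$, \cite[Lemma 2.5]{FackKosaki1986} yields $cbc^*\prec_w b$ and $cbc^*\in L^1(\m)^+$ (via $\tau(cbc^*)=\tau(bc^*c)\leq\tau(b)$); Lemma \ref{proposition:la esperanza es mayorizada} then gives $E_\cA(cbc^*)\prec cbc^*\prec_w b$, and the $\|\cdot\|_1$-continuity of each $U_t$ from Lemma \ref{lem:continuidad} propagates $\prec_w$ to the $\|\cdot\|_1$-closure.

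For the reverse inclusion, I fix $a\in\cA^+$ with $a\prec b$, so $a\in L^1(\cA)^+$ and $\tau(a)\leq\tau(b)$. The strategy is to replace $b$ by a truncation $b_0\prec b$ of equal trace to $a$, apply Theorem \ref{teo: sh en tipo traza} to the pair $(a,b_0)$, and then build the required contractions from the resulting unitaries together with the truncating projection. Since $t\mapsto U_t(b)=\int_0^t\lambda_s(b)\,ds$ is continuous, non-decreasing, and runs from $0$ to $\tau(b)$, I choose $\tau^*\in[0,\infty]$ with $U_{\tau^*}(b)=\tau(a)$ (taking $\tau^*=\infty$, hence $q=I$, when $\tau(a)=\tau(b)$). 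Using the diffuseness of $\m$ and the $\tau$-compactness of $b$, fix a complete flag $\{e(t)\}_{t\geq 0}\subset\cP(\m)$ subordinate to $b$ as in \eqref{eq:integral con la bandera completa}, set $q:=e(\tau^*)\in\cP(\m)$, and define
\[
b_0:=qbq=\int_0^{\tau^*}\lambda_t(b)\,de(t)\in L^1(\m)^+.
\]
Then $\lambda_t(b_0)=\lambda_t(b)\,\chi_{[0,\tau^*)}(t)$, so $\tau(b_0)=U_{\tau^*}(b)=\tau(a)$, and $a\prec b_0$: for $t\leq\tau^*$, $U_t(a)\leq U_t(b)=U_t(b_0)$; for $t>\tau^*$, $U_t(a)\leq\tau(a)=U_{\tau^*}(b)=U_t(b_0)$; and $L_t(a)\geq 0=L_t(b_0)$ is automatic.

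I then apply Theorem \ref{teo: sh en tipo traza} to $(a,b_0)$, which is admissible since $a\prec b_0$ and $\tau(a)=\tau(b_0)$, obtaining unitaries $w_n\in\um$ with $\|E_\cA(w_n b_0 w_n^*)-a\|_1\to 0$. Setting $c_n:=w_n q$, the operator $c_n$ is a contraction (as $\|w_n q\|\leq\|q\|\leq 1$), and the identity
\[
c_n b c_n^*=(w_n q)\,b\,(q w_n^*)=w_n(qbq)w_n^*=w_n b_0 w_n^*
\]
converts the preceding convergence into $\|E_\cA(c_n b c_n^*)-a\|_1\to 0$, placing $a$ in $\overline{E_\cA(\{cbc^*:\|c\|\leq 1\})}^{\|\cdot\|_1}$ and completing the reverse inclusion.

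The main obstacle I anticipate is the clean setup of the truncation $b_0$: one must verify both that the complete flag subordinate to $b$ exists in $\cP(\m)$ (available from the standing hypotheses on $\m$ and $b$ via \eqref{eq:integral con la bandera completa}) and that the truncation at level $\tau^*$ yields an operator with the predicted spectral scale and majorization relation to $a$. Once $b_0$ is in hand, the rest of the argument is essentially formal, reducing the contractive $L^1$ Schur--Horn theorem to the trace-preserving unitary version via the algebraic identity $c_n b c_n^*=w_n b_0 w_n^*$ when $c_n=w_n q$ and $b_0=qbq$; in particular no construction of a majorant of $a$ inside $\cA$ is needed.
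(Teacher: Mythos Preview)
Your argument is correct, and it follows a genuinely different route from the paper's own proof.

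The paper establishes the reverse inclusion by a reduction to II$_1$ corners: it fixes complete flags $\{e_a(t)\}\subset\cA$ and $\{e_b(t)\}\subset\m$, sets $p_s=e_a(s)\vee e_b(s)$, observes that $a\,e_a(s)\prec_w b\,e_b(s)$ inside the II$_1$-factor $p_s\m p_s$, and then invokes the contractive Schur--Horn theorem for II$_1$ factors from \cite[Theorem 3.4]{ArgeramiMassey2008a} to produce contractions $c_s\in p_s\m p_s$ with $E_{\cA e_a(s)}(c_s\,b\,e_b(s)\,c_s^*)$ close to $a\,e_a(s)$ in trace norm; the $L^1$ tail $\int_s^\infty\lambda_t(a)\,dt\to 0$ completes the approximation. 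Your approach instead truncates $b$ once along its own flag to obtain $b_0=q\,b\,q$ with $\tau(b_0)=\tau(a)$ and $a\prec b_0$, applies the trace-preserving unitary $L^1$ Schur--Horn theorem (Theorem~\ref{teo: sh en tipo traza}) to the pair $(a,b_0)$, and reads off the required contractions as $c_n=w_n q$ via the identity $c_n\,b\,c_n^*=w_n\,b_0\,w_n^*$. The payoff of your route is that it is entirely internal to the paper---no appeal to the II$_1$ result of \cite{ArgeramiMassey2008a} is needed---and the construction is shorter (one truncation, one invocation of a prior theorem, done). The paper's route, by contrast, makes the II$_1$ connection explicit and illustrates why the $L^1$ hypothesis is the natural threshold for that kind of finite-corner reduction, as noted in the remark following the theorem. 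One small point worth stating explicitly in your write-up is that the flag $\{e(t)\}$ commutes with $b$ (so that $q\,b\,q=\int_0^{\tau^*}\lambda_t(b)\,de(t)$ really holds); this is implicit in \eqref{eq:integral con la bandera completa} but is the hinge on which the identity $c_n b c_n^*=w_n b_0 w_n^*$ turns.
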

\begin{proof}
If $b\in L^1(\m)\cap\m^+$ and $a\prec_w b$ then,
since $\lambda_t(b)\in L^1(\RR^+)$, we get $\lambda_t(a)\in L^1(\RR^+)$. In particular,
$a\in\cK(\m)^+$. Thus, the second equality is immediate from the fact that for positive $\tau$-compact operators one has
$L_t=0$. So for the rest of the proof we focus on the first equality.

The inclusion ``$\subset$'' is obtained by combining the arguments at the beginning of
 the proofs of Theorems \ref{teo:SH contractivo} and \ref{teo: sh en tipo traza}.

Conversely, let $a\prec_wb$ for some $a\in \cA^+$ (so that $a\in\cK(\cA)^+$).
 We write both $a$ and $b$ in terms of complete flags in
 $\cA$ and $\m$ respectively, i.e.
 \[
a=\int_0^\infty\lambda_t(a)\,de_a(t),\ \ \ b=\int_0^\infty\lambda_t(b)\,de_b(t),
 \]
 with $e_a(t)\in\cA$ for all $t$ (this can be done since $\cA$ is diffuse).
 Then $a\prec_wb$ means that, for any $s>0$, $\int_0^s\lambda_t(a)\,dt\leq\int_0^s\lambda_t(b)\,dt$.
 For each $s>0$, let $p_s=e_a(s)\vee e_b(s)$, a finite projection. So we have
 $ae_a(s)\prec_wbe_b(s)$ in the II$_1$-factor $p_s\m p_s$.
 By \cite[Theorem 3.4]{ArgeramiMassey2008a}, there exists a contraction $c_s\in p_s\m p_s\subset \m$
 with
 \[
k_s:=\tau_s(|a\,e_a(s)-E_{\cA e_a(s)}\,(c_s \,e_b(s)\, b\, e_b(s)\, c_s^*)|)<\frac1{\tau(p_s)^2}.
 \]
 The trace $\tau_s$ is given by $\tau_s=\tau/\tau(p_s)$; using the fact that $e_a(s)\in\cA$
 and that $\cA$ is abelian, we get that $E_{\cA \, e_a(s)}(\cdot)=e_a(s)\,E_\cA(\cdot) $. So
 \[
\tau(|a\,e_a(s)-E_\cA(e_a(s)\,c_s \,e_b(s)\, b \,e_b(s)\, c_s^*\, e_a(s))|)=\tau(p_s)\,k_s<\frac1{\tau(p_s)}
\leq\frac1s
 \]
 (note that $p_s\geq e_a(s)$, so $\tau(p_s)\geq s$).
Let $\e>0$; fix
$s>0$ such that $s>2/\e$ and $\int_s^\infty\lambda_t(a)\,dt<\e/2$. Put $c=e_a(s)\, c_s \, e_b(s)$, a contraction
in $\m$. Then
\begin{align*}
\|a-E_\cA(c\,b\,c^*)\|_1&\leq\|a-a\,e_a(s)\|_1+\|a\,e_a(s)-E_\cA(e_a(s)\,c_s \,e_b(s)\, b\, e_b(s) \,c_s^*\,e_a(s))\|_1\\
&=\int_s^\infty\lambda_a(t)\,dt+\tau(|a\,e_a(s)-E_\cA(e_a(s)\,c_s \,e_b(s) \,b \,e_b(s)\, c_s^*\,e_a(s))|)\\
&\leq\frac\e2+\frac1s<\frac\e2+\frac\e2=\e.
\end{align*}
As $\e$ was arbitrary, this shows that $a\in\overline{E_\cA(\{c\,b\,c^*: \|c\|\leq1\})}^{\,\|\cdot\|_1}$.
\end{proof}

\begin{rem} The proof of Theorem \ref{teo: sh en tipo traza, contractivo} uses
a reduction to a II$_1$ case, under the hypothesis that the operators belong to $L^1(\m)$.
This last assumption seems to be essential for such a reduction, and there is no immediate
hope of using the same idea to obtain results like Theorems \ref{teo:SH} and
\ref{teo:SH contractivo}. Conversely, one cannot expect to use those results to obtain
Theorem \ref{teo: sh en tipo traza, contractivo}, since convergence in measure does
not imply $\|\cdot\|_1$-convergence.
\end{rem}

\bibliographystyle{abbrv}
\bibliography{sh_bib}

\begin{thebibliography}{10}

\bibitem{AntezanaMasseyRuizStojanoff2009}
J.~Antezana, P.~Massey, M.~Ruiz, and D.~Stojanoff.
\newblock The {S}chur-{H}orn theorem for operators and frames with prescribed
  norms and frame operator.
\newblock {\em Illinois J. Math.}, 51(2):537--560 (electronic), 2007.

\bibitem{ArgeramiMassey2007}
M.~Argerami and P.~Massey.
\newblock A {S}chur-{H}orn theorem in {${\rm II}\sb 1$} factors.
\newblock {\em Indiana Univ. Math. J.}, 56(5):2051--2059, 2007.

\bibitem{ArgeramiMassey2008a}
M.~Argerami and P.~Massey.
\newblock A contractive version of a {S}chur-{H}orn theorem in {$\rm II\sb 1$}
  factors.
\newblock {\em J. Math. Anal. Appl.}, 337(1):231--238, 2008.

\bibitem{ArgeramiMassey2008b}
M.~Argerami and P.~Massey.
\newblock The local form of doubly stochastic maps and joint majorization in
  {${\rm II}\sb 1$} factors.
\newblock {\em Integral Equations Operator Theory}, 61(1):1--19, 2008.

\bibitem{ArgeramiMassey2009}
M.~Argerami and P.~Massey.
\newblock Towards the carpenter's theorem.
\newblock {\em Proc. Amer. Math. Soc.}, 137(11):3679--3687, 2009.

\bibitem{arv2006}
W.~Arveson.
\newblock Diagonals of normal operators with finite spectrum.
\newblock {\em Proc. Natl. Acad. Sci. USA}, 104(4):1152--1158 (electronic),
  2007.

\bibitem{ArvesonKadison2007}
W.~Arveson and R.~V. Kadison.
\newblock Diagonals of self-adjoint operators.
\newblock In {\em Operator theory, operator algebras, and applications}, volume
  414 of {\em Contemp. Math.}, pages 247--263. Amer. Math. Soc., Providence,
  RI, 2006.

\bibitem{Bhatia1997}
R.~Bhatia.
\newblock {\em Matrix analysis}, volume 169 of {\em Graduate Texts in
  Mathematics}.
\newblock Springer-Verlag, New York, 1997.

\bibitem{Birkhoff1946}
G.~Birkhoff.
\newblock Three observations on linear algebra.
\newblock {\em Univ. Nac. Tucum\'an. Revista A.}, 5:147--151, 1946.

\bibitem{Davidson1996}
K.~R. Davidson.
\newblock {\em {$C\sp *$}-algebras by example}, volume~6 of {\em Fields
  Institute Monographs}.
\newblock American Mathematical Society, Providence, RI, 1996.

\bibitem{MR2176806}
I.~S. Dhillon, R.~W. Heath, Jr., M.~A. Sustik, and J.~A. Tropp.
\newblock Generalized finite algorithms for constructing {H}ermitian matrices
  with prescribed diagonal and spectrum.
\newblock {\em SIAM J. Matrix Anal. Appl.}, 27(1):61--71 (electronic), 2005.

\bibitem{Fack1982}
T.~Fack.
\newblock Sur la notion de valeur caract\'eristique.
\newblock {\em J. Operator Theory}, 7(2):307--333, 1982.

\bibitem{FackKosaki1986}
T.~Fack and H.~Kosaki.
\newblock Generalized {$s$}-numbers of {$\tau$}-measurable operators.
\newblock {\em Pacific J. Math.}, 123(2):269--300, 1986.

\bibitem{HardyLittlewoodPolya1929}
G.~H. Hardy, J.~E. Littlewood, and G.~P{\'o}lya.
\newblock Some simple inequalities satisfied by convex functions.
\newblock {\em Messenger of Math.}

\bibitem{Hiai1987}
F.~Hiai.
\newblock Majorization and stochastic maps in von {N}eumann algebras.
\newblock {\em J. Math. Anal. Appl.}, 127(1):18--48, 1987.

\bibitem{Hiai1989}
F.~Hiai.
\newblock Spectral majorization between normal operators in von {N}eumann
  algebras.
\newblock In {\em Operator algebras and operator theory ({C}raiova, 1989)},
  volume 271 of {\em Pitman Res. Notes Math. Ser.}, pages 78--115. Longman Sci.
  Tech., Harlow, 1992.

\bibitem{HiaiNakamura1987}
F.~Hiai and Y.~Nakamura.
\newblock Majorizations for generalized {$s$}-numbers in semifinite von
  {N}eumann algebras.
\newblock {\em Math. Z.}, 195(1):17--27, 1987.

\bibitem{Horn1954}
A.~Horn.
\newblock Doubly stochastic matrices and the diagonal of a rotation matrix.
\newblock {\em Amer. J. Math.}, 76:620--630, 1954.

\bibitem{Kadison2003}
R.~V. Kadison.
\newblock The {P}ythagorean theorem. {II}. {T}he infinite discrete case.
\newblock {\em Proc. Natl. Acad. Sci. USA}, 99(8):5217--5222 (electronic),
  2002.

\bibitem{Kadison2004}
R.~V. Kadison.
\newblock Non-commutative conditional expectations and their applications.
\newblock In {\em Operator algebras, quantization, and noncommutative
  geometry}, volume 365 of {\em Contemp. Math.}, pages 143--179. Amer. Math.
  Soc., Providence, RI, 2004.

\bibitem{MR2436756}
V.~Kaftal and G.~Weiss.
\newblock A survey on the interplay between arithmetic mean ideals, traces,
  lattices of operator ideals, and an infinite {S}chur-{H}orn majorization
  theorem.
\newblock In {\em Hot topics in operator theory}, volume~9 of {\em Theta Ser.
  Adv. Math.}, pages 101--135. Theta, Bucharest, 2008.

\bibitem{Kaf2}
V.~Kaftal and G.~Weiss.
\newblock An infinite dimensional {S}chur-{H}orn theorem and majorization
  theory.
\newblock {\em J. Funct. Anal.}, 259(12):3115--3162, 2010.

\bibitem{Kamei1983}
E.~Kamei.
\newblock Majorization in finite factors.
\newblock {\em Math. Japon.}, 28(4):495--499, 1983.

\bibitem{Kamei1984}
E.~Kamei.
\newblock Double stochasticity in finite factors.
\newblock {\em Math. Japon.}, 29(6):903--907, 1984.

\bibitem{MR2581231}
P.~Massey and M.~Ruiz.
\newblock Minimization of convex functionals over frame operators.
\newblock {\em Adv. Comput. Math.}, 32(2):131--153, 2010.

\bibitem{Neumann1999}
A.~Neumann.
\newblock An infinite-dimensional version of the {S}chur-{H}orn convexity
  theorem.
\newblock {\em J. Funct. Anal.}, 161(2):418--451, 1999.

\bibitem{MR1887630}
A.~Neumann.
\newblock An infinite dimensional version of the {K}ostant convexity theorem.
\newblock {\em J. Funct. Anal.}, 189(1):80--131, 2002.

\bibitem{Paulsen2002}
V.~Paulsen.
\newblock {\em Completely bounded maps and operator algebras}, volume~78 of
  {\em Cambridge Studies in Advanced Mathematics}.
\newblock Cambridge University Press, Cambridge, 2002.

\bibitem{Petz1985}
D.~Petz.
\newblock Spectral scale of selfadjoint operators and trace inequalities.
\newblock {\em J. Math. Anal. Appl.}, 109(1):74--82, 1985.

\bibitem{Schur1923}
I.~Schur.
\newblock {\"U}ber eine klasse von mittelbildungen mit anwendung auf die
  determinantentheorie.
\newblock {\em S.-Ber. Berliner math. Ges.}, 2:9--20, 1923.

\end{thebibliography}

\end{document}